\documentclass[10pt,a4paper]{article}
\usepackage[a4paper,left=2.5cm,right=2.5cm,top=2.5cm,bottom=2.5cm]{geometry}
\usepackage[british]{babel}
\usepackage[utf8]{inputenc}
\usepackage[T1]{fontenc}

\usepackage{csquotes} 
\usepackage[backend=biber, style=alphabetic, maxbibnames=4,minbibnames=4, sorting=nyt]{biblatex}
\usepackage{amsfonts}
\usepackage{amssymb}
\usepackage{amsmath}
\usepackage{tikz-cd}
\usepackage{mathtools,bm,bbm}
\usepackage[mathscr]{eucal}
\usepackage{xfrac,dsfont}
\usepackage{stackengine,units}
\usepackage{relsize}
\DeclareMathAlphabet{\mathbbmsl}{U}{bbm}{m}{sl}
\usepackage[colorlinks=true, allcolors=blue]{hyperref}
\usepackage{enumitem}
\usepackage{calc}

\usepackage{amsthm}
\newtheorem{theorem}{Theorem}[section]
\newtheorem{lemma}[theorem]{Lemma}

\newtheorem{definition}[theorem]{Definition}
\newtheorem{example}[theorem]{Example}
\newtheorem{corollary}[theorem]{Corollary}
\newtheorem{remark}[theorem]{Remark}
\newtheorem*{theorem*}{Theorem}

\newcommand{\N}{\mathbb{N}}
\newcommand{\Z}{\mathbb{Z}}
\newcommand{\R}{\mathbb{R}}
\newcommand{\C}{\mathbb{C}}
\newcommand{\frA}{\mathfrak{A}}
\newcommand{\frg}{\mathfrak{g}}
\newcommand{\frh}{\mathfrak{h}}
\newcommand{\frt}{\mathfrak{t}}
\newcommand{\frk}{\mathfrak{k}}

\newcommand{\frp}{\mathfrak{p}}

\newcommand{\Dcal}{\mathcal{D}}
\newcommand{\Hcal}{\mathcal{H}}
\newcommand{\Lcal}{\mathcal{L}}

\newcommand{\Ecal}{\mathcal{E}}

\newcommand{\dolb}{\bar{\partial}}

\DeclareMathOperator{\Tr}{Tr}
\DeclareMathOperator{\image}{Im}
\DeclareMathOperator{\kernel}{Ker}

\DeclareMathOperator{\Sp}{Sp}

\DeclareMathOperator{\SU}{SU}
\DeclareMathOperator{\End}{End}

\DeclareMathOperator{\Id}{Id}
\DeclareMathOperator{\Ad}{Ad}
\DeclareMathOperator{\Aut}{Aut}
\DeclareMathOperator{\proj}{proj}
\DeclareMathOperator{\Hom}{Hom}
\DeclareMathOperator{\prim}{prim}
\DeclareMathOperator{\ad}{ad}

\newcommand{\spn}[1]{\Sp({#1})}

\newcommand{\metric}[2]{\langle #1,#2 \rangle}

\begin{document}
\title{Analytic Torsion for Symmetric Contact Manifolds}
\author{Niklas Henningsen\thanks{Mathematisch-Naturwissenschaftliche Fakult\"at, Mathematisches Institut, Heinrich-Heine-Universit\"at D\"usseldorf, Universit\"atsstraße
1, 40225 D\"usseldorf, Germany\\ Email: \texttt{niklas.henningsen@hhu.de}}}
\date{07 September 2026}
\maketitle

\begin{abstract}        
  \noindent We review the definitions of the Rumin complex and analytic contact torsion for contact manifolds. We then compute the contact torsion for symmetric contact manifolds. To this end, we use a representation-theoretic description of the relevant operators and spaces of differential forms. Our computation of the contact torsion for symmetric contact manifolds relies on a recent result by Rumin that reduces the problem to harmonic forms. Following a construction by Boothby and Wang, the contact manifolds under consideration are total spaces of $S^1$\nobreakdash-principal bundles over K\"ahler manifolds. This allows us to interpret the harmonic forms as Dolbeault cohomology with coefficients in a certain holomorphic line bundle of the base K\"ahler manifold, thereby simplifying the computation by allowing us to use results from complex geometry. Furthermore, we generalise Rumin's result to the equivariant case and, using the previous considerations, determine the equivariant contact torsion for the case of isolated fixed points.
\end{abstract}
\noindent Keywords: Analytic torsion; Rumin complex; symmetric contact manifold; circle bundle; K\"ahler manifold

\noindent 2020 Mathematics Subject Classification: Primary: 58J52, Secondary: 53D10, 32V20

\section{Introduction}
\label{contactsect5}
Contact geometry can be understood as the odd dimensional analogue of symplectic geometry, sharing many of its algebraic structures. While a symplectic manifold of dimension $2n$ possesses a closed and non-degenerate bilinear form, a contact manifold $M$ of dimension $2n+1$ is equipped with a maximally non-integrable hyperplane distribution $\Dcal\subset TM$, which is induced by local $1$-forms $\theta$ such that $d\theta$ is non-degenerate on $\Dcal$.

\noindent Therefore, a contact manifold has an additional distinct direction within its tangent bundle which is pointing outside the contact structure $\Dcal$. For a contact manifold with globally defined contact form, this direction is canonically generated by a unique vector field, known as the Reeb vector field $T$.

\noindent Despite the differences and the influence of the additional direction, there are nevertheless some interesting similarities with K\"ahler geometry if the contact structure can be equipped with a suitable complex structure. These similarities will become very helpful as we proceed.

In \cite{ruminformesdiffer}, Rumin defined a complex adapted to contact geometry, which represents an analogue of the de Rham complex specifically for contact geometry. Essentially, this so-called Rumin complex arises by dividing out an ideal from the differential forms of $M$, which is generated by the contact form $\theta$ and its exterior derivative $d\theta$. A distinctive feature is the middle degree, since in degree $n$ the differential operator of the Rumin complex is a differential operator of order $2$, whilst in all other degrees they are differential operators of order $1$. In \cite[Equations~(34)]{ruminsubriemannian}, Rumin showed that for contact manifolds whose complex structure is integrable with respect to the contact structure, such that $\Lcal_T(J)=0$, re\-scaling the operators in the Rumin complex implies that the resulting operators satisfy K\"ahler-type identities, which prove to be very helpful in other applications.

Analytic torsion (also known as Ray-Singer torsion) was defined in \cite{RaySingerAnalyticTorsion} as an analytic analogue of Reidemeister torsion \cite{Reidemeister}.
Unlike the Hodge Laplacian, the Rumin Laplacian as defined with the rescaled \mbox{Rumin} complex is not elliptic, but hypoelliptic on compact contact manifolds. This made it possible, in \cite{rumin2012analytic}, to define an analytic contact torsion based on the Rumin Laplacian~$\Delta_\Ecal$.
Rumin and Seshadri defined the zeta function
\begin{align*}
    Z(s) := \frac{1}{2} \sum\limits_{k=0}^{2n+1} (-1)^k \omega(k) \zeta(\Delta_\Ecal\vert \Ecal^k(M))(s),
\end{align*}
where
\begin{align*}
    \zeta(\Delta_\Ecal\vert \Ecal^k(M)) (s) := \dim (H_\Ecal^k(M)) + \sum\limits_{\lambda\in\operatorname{spec}^\ast (\Delta_\Ecal\vert \Ecal^k(M))} \frac{1}{\lambda^s},
\end{align*}
and $H_\Ecal^k(M)$ denotes the cohomology of the Rumin complex. The analytic contact torsion is defined as $T_\Ecal(M):=-\frac{1}{2}Z'(0)$, where $Z'(0)$ is evaluated using the meromorphic continuation of the zeta function $Z(s)$, which is holomorphic in $s=0$.
This definition differs only slightly from the original definition of contact torsion, $T_\Ecal(M):=\exp(-\frac{1}{2}Z'(0))$, as the exponential function is omitted here. Building on this, Teßmer defined the equivariant contact torsion in \cite{tessmer}.

In \cite{rumin2012analytic}, it was shown for $3$-dimensional CR Seifert manifolds that, for the twisted version with a flat unitary vector bundle, the contact torsion (based on the unscaled Rumin Laplacian) coincides with the Ray-Singer torsion. 
A similar result for spheres from \cite{kitaokaspheres} states that
\begin{align*}
    T_\Ecal (S^{2n+1}) = n!T_\mathrm{RS}(S^{2n+1}),
\end{align*}
where $T_\mathrm{RS}(S^{2n+1})$ denotes the Ray-Singer torsion and $T_\Ecal (S^{2n+1})$ denotes the contact torsion (as defined by Rumin with the exponential function) based on the rescaled Rumin Laplacian. In \cite[Corollary 3]{AlbinQuanSubRiemannian}, Albin and Quan showed that the Ray-Singer torsion and the contact torsion based on the unscaled Rumin Laplacian differ by an integral of a local invariant of the metric. They also showed in \cite[Equation~(7.1)]{AlbinQuanSubRiemannian} that the unscaled and rescaled contact torsion differ by a local term.

In \cite{rumin2026}, Rumin showed for CR Seifert manifolds $M$, i.e.\ compact contact manifolds with integrable complex structure on $\Dcal$ and $\Lcal_T(J)=0$ such that $M$ admits a circle action generated by the Reeb vector field $T$, that the heat torsion function can be expressed entirely with the space of harmonic forms of $\Delta_{\dolb_\Dcal}=\dolb_\Dcal\dolb_\Dcal^\ast + \dolb_\Dcal^\ast\dolb_\Dcal$, on which $\Delta_\Ecal = -\Lcal_T^2$ holds. Here, $d_\Dcal = \partial_\Dcal + \dolb_\Dcal$ is the de Rham operator on the differential forms of the contact structure $\frA^\bullet (\Dcal)$ followed by a projection onto the $\frA^\bullet (\Dcal)$. In \cite{KitaokaHarmonicSasakian}, Kitaoka proved a similar result for Sasakian manifolds, which, through several cancellations of terms, states that the zeta function can be reduced to the eigenspaces of $-\Lcal_T^2$ for certain other harmonic forms. The harmonic forms relevant to \cite{KitaokaHarmonicSasakian} were studied in \cite{GarfieldLeeRuminComplex} and \cite{CaseRuminComplex}. Further information and results on Sasakian manifolds can be found in \cite{BoyerGalickiSasakianManifolds}.
In \cite{kitaokaspheres}, Kitaoka determined the contact torsion on the basis of the rescaled Rumin Laplacian of $S^{2n+1}\subset \C^{n+1}$ using the representation theory of $\operatorname{U}(n+1)$. The relevant representation theory of $\operatorname{U}(n+1)$ was investigated in more detail in \cite{JulgKasparov} for the complexification of the spaces $\Ecal^k(M)$ of the Rumin complex, by decomposing these spaces into irreducible representations. The irreducible representations occur with multiplicity at most $1$, which enabled the explicit calculation of the eigenvalues of the rescaled Rumin Laplacian. In a subsequent article \cite{kitaokalensspaces}, Kitaoka used the methods already employed in \cite{kitaokaspheres} to also compute the contact torsion for lens spaces.

In \cite{contactmanifoldsboothby}, Boothby and Wang examined so-called regular contact manifolds and showed in \cite[Theorem~1]{contactmanifoldsboothby} that there exists a rescaled contact form $\theta$ which induces an action of $S^1$ on the contact manifold $M$. With respect to this rescaled contact form, $M$ is the total space of a $S^1$\nobreakdash-principal bundle over a symplectic manifold $N$, where the pullback of the symplectic form of $N$ yields exactly $d\theta$. In \cite[Theorem~3]{contactmanifoldsboothby}, they also provided a converse to this theorem. In \cite{contactmanifoldsNURboothby}, building on \cite{contactmanifoldsboothby}, Boothby showed that for every compact homogeneous contact manifold $G/H$, i.e.\ a homogeneous manifold where the contact form is $G$-invariant, one obtains a K\"ahler manifold $G/K$ with $H\subset K$ such that $G/H\rightarrow G/K$ is a $S^1$-principal bundle. Fibre bundles with fibre $S^1$ are generally well-studied, for example in the context of analytic torsion forms, see, for example, \cite{BismutLott}.\\

The (homogeneous) symmetric contact manifolds $G/H$ for semisimple Lie groups $G$ were classified by Alekseevsky and Gorodski in \cite{semisimplecontactalekseevsky} by using the results of \cite{BieliavskySymplectic}. The explicit reference to homogeneity is necessary here, as in \cite{semisimplecontactalekseevsky} they did not use the classical definition of symmetric spaces, instead one considers a definition of symmetry adapted to contact manifolds, which requires the symmetry property only on the contact structure. As a result, there are also examples of non-homogeneous symmetric contact manifolds, see \cite{Zalabova}. However, as in \cite{semisimplecontactalekseevsky}, we consider only the homogeneous cases. The compact CR symmetric contact manifolds relevant to our purposes occur, as described above, as the total space of a $S^1$-principal bundle $G/H\rightarrow G/K$ over a symmetric K\"ahler manifold $G/K$, and they possess an action of $S^1$ induced by the Reeb vector field. The spheres $S^{2n+1}$ are classic examples of such spaces. Motivated by the computation of the contact torsion in \cite{kitaokaspheres} for spheres, our aim is to compute the contact torsion for general compact CR symmetric contact manifolds.

\noindent To this end, one could first attempt, following the approach in \cite{ikedataniguchiLaplace}, to express the Rumin Laplacian on the induced $G$-representations entirely in terms of representation theory, in the same way as the Hodge Laplacian. The results from \cite{ikedataniguchiLaplace} for the Hodge Laplacian were, for example, used in \cite{kkhermsymm} to compute the holomorphic torsion of a Hermitian symmetric space. However, this approach proves to be very difficult in the context of contact manifolds and the associated Rumin complex, as the calculation of eigenvalues is significantly complicated by the presence of projections onto subspaces in the definition of the differential operators and by the differential operator in middle degree $n$. The approach from \cite{rumin2026} is therefore a welcome alternative: thanks to Rumin's results, the computation of the contact torsion reduces to the calculation of the Lie derivative along the Reeb vector field, i.e.\ precisely the direction that is orthogonal to the contact structure $\Dcal$.\\

In Section~\ref{contactsect6}, we first revisit the definition of a contact manifold and the construction of the Rumin complex. The definition of analytic contact torsion follows in Section~\ref{contactsect7}. In Section~\ref{contactsect8}, we begin by describing, following Ikeda and Taniguchi \cite{ikedataniguchiLaplace}, the relevant operators and spaces of differential forms using methods from representation theory. We consider a decomposition of the relevant spaces with respect to the $S^1$-action. We then show, on the weight spaces of this $S^1$-action, that $-\Lcal_T^2$ acts as $\left(\frac{m+p-q}{2n}\right)^2$ on the $(p,q)$-forms with $S^1$-weight $m$. In Section~\ref{contactsect8.2}, we show, with the help of the fibre bundle $G/H\rightarrow G/K$, that the harmonic $(p,q)$-forms of $\Delta_{\dolb_\Dcal}$ of $S^1$-weight $m$ are isomorphic to the Dolbeault cohomology $H^{p,q}_{\dolb} (G/K,\Lcal^{m+p-q})$ of $G/K$ for a certain holomorphic associated line bundle $\Lcal^{m+p-q}\rightarrow G/K$.
Together with Rumin's result described above \cite{rumin2026}, we then compute the contact torsion for compact CR symmetric spaces in Section~\ref{contactsect8.3}. The contact torsion is given by
\begin{align*}
    T_\Ecal (G/H) = {\frac{\vert W_G\vert}{\vert W_K\vert}} \cdot \ln\left(4\pi \frac{n}{N}\right),
\end{align*}
where $W_G$ and $W_K$ are the Weyl groups of $G$ and $K$ respectively, and $N = \vert \Gamma \vert$ is the order of $\Gamma$ for $K = (H \times S^1)/\Gamma$. We also derive a product formula for the contact torsion of compact CR symmetric contact manifolds.\\
In Section~\ref{contactsect8.4}, we generalise Rumin's result for the equivariant case, which was introduced in \cite{tessmer}, with respect to the action of elements of $G$. For the action of an element $t=\exp_G(X)$ of the maximal torus, which generates the maximal torus, the fixed points are isolated. Thus, due to the Atiyah-Bott fixed point theorem \cite{atiyahbott2}, we obtain the equivariant contact torsion
\begin{align*}
    T_\Ecal (M,t) =& T_\Ecal (M) + \frac{\vert W_G\vert}{\vert W_K\vert}\gamma + \sum\limits_{[w]\in W_G/W_K} \frac{1}{2}(\psi([(w\cdot \lambda)(X)]) + \psi([-(w\cdot \lambda)(X)])),
\end{align*}
where $\psi(x)=\frac{\Gamma'(x)}{\Gamma(x)}$ is the Digamma function, $\gamma$ is Euler's constant and $[(w\cdot \lambda)(X)]$, $[-(w\cdot \lambda)(X)]$ are the projections to $]0,1[$ for $(w\cdot \lambda)(X)\in\R\setminus\Z$, i.e.\ the unique representatives in $]0,1[$ modulo $1$.\\

An interesting starting point for further research would be to investigate to what extent the methods used can be generalised to the case of quotients of CR symmetric contact manifolds by the action of finite cyclic subgroups that leave the contact structure invariant, such as in the case of lens spaces. One challenge here is that, in this case, the base manifold is generally not a manifold but an orbifold. For example, $\mathbf{P}^n\C$, which appears in the context of spheres $S^{2n+1}$, is no longer a manifold when divided by some of the groups that define the lens spaces. Many classical results are also available in the orbifold setting, such as the Hirzebruch-Riemann-Roch theorem for orbifolds, see \cite{Kawasaki}.\\

This work is the shortened version of a part of the author’s PhD thesis that is currently under review.
\section{Contact Manifolds}
\label{contactsect6}
\subsection{Preliminaries}
\label{contactsect6.1}
In this section, we first review the basic theory of contact manifolds, following \cite{ruminformesdiffer}, \cite{ruminsubriemannian} and \cite{rumin2012analytic}.
\begin{definition}
    A \textbf{contact manifold} is a pair $(M,\Dcal)$ consisting of a $2n+1$-dimensional manifold $M$ and a hyperplane distribution $\Dcal\subset TM$ such that for every $x\in M$, there exists an open neighbourhood $U\subset M$ with $x\in U$ and a $1$-form $\theta\in \frA^1(U)$ with
    \begin{enumerate}
        \item $\kernel (\theta) = \Dcal_{\vert U}$,
        \item $\theta\wedge (d\theta)^n_{\vert x}\neq 0$ for all $x\in U$.
    \end{enumerate}
    The hyperplane distribution $\Dcal$ is called \textbf{contact structure} and the $1$-form $\theta$ is called a (local) \textbf{contact form}.
\end{definition}
From now on, we will focus exclusively on compact contact manifolds that possess a \textbf{globally defined contact form}. This means that all contact manifolds under consideration are always orientable, due to the requirement that $\theta\wedge (d\theta)^n_{\vert x}\neq 0$ for all $x\in M$.
\begin{definition}
    For every contact manifold $M$ with contact form $\theta\in\frA^1(M)$ there exists a unique vector field $T\in\Gamma(M,TM)$, called the \textbf{Reeb vector field}, such that
    \begin{enumerate}
        \item $\theta(T)=1$,
        \item $d\theta (T,\cdot) = 0$.
    \end{enumerate}
\end{definition}
According to \cite{McDuffSalamonSymplectic} and \cite{tessmer}, we can always choose an almost complex structure $J\in\Gamma (M,\End(\Dcal))$ such that $J^2=-\Id_\Dcal$, $d\theta(J\cdot,J\cdot)=d\theta$ and $d\theta(\cdot,J\cdot)$ is a positive definit metric on $\Dcal$. Let $J(T):=0$. We then proceed to define
\begin{align*}
    g:=\theta\otimes \theta + d\theta(\cdot,J\cdot)
\end{align*}
as the metric on $M$. With regard to this metric, we have $\Dcal\perp \R\cdot T$.

\noindent Since we are working with compact contact manifolds, we obtain the $L^2$-scalar product on $\Gamma (M,TM)$ (respectively $\frA^\bullet (M)$)
\begin{align*}
    \langle X,Y\rangle_{L^2} := \int_M g(X,Y)\operatorname{dvol}_g,
\end{align*}
where $\operatorname{dvol}_g$ is the volume form defined by the metric $g$. It follows that $T^\flat:=g(T,\cdot)=\theta$ and $(\theta)^\ast = \iota_T$. Euclidean/Riemannian metrics are canonically extended to Hermitian metrics where necessary.\\
Since $TM=\R\cdot T\stackrel{\perp}{\oplus} \Dcal$ and, accordingly, $T^\ast M=\R\cdot \theta\stackrel{\perp}{\oplus} \Dcal^\ast$, we can now decompose the exterior powers of $T^\ast M$ orthogonally with respect to this decomposition:
\begin{align*}
    \Lambda^k T^\ast M = \R\cdot \theta\wedge\Lambda^{k-1}\Dcal^\ast \stackrel{\perp}{\oplus} \Lambda^k\Dcal^\ast.
\end{align*}
This, together with $\frA^k(\Dcal):=\Gamma (M,\Lambda^k \Dcal^\ast)=\kernel ({\iota_T}_{\vert \frA^k(M)})$, provides a decomposition of the differential forms
\begin{align}
    \frA^k(M)=\theta\wedge\frA^{k-1}(\Dcal)\stackrel{\perp}{\oplus} \frA^k(\Dcal). \label{contactlabel1}
\end{align}
We will consider some basic results regarding symplectic structures from \cite[Chap\-ter~I]{WeilKaehler}, \cite{McDuffSalamonSymplectic}, \cite{griffiths1994principles} and \cite{tessmer}.\\
There always exists a local basis of $\Dcal$ of the form
\begin{align}
    X_1,\ldots,X_n,X_{n+1}=J(X_1),\ldots,X_{2n}=J(X_n).\label{contactlabel27}
\end{align}
This also provides us with a local (complex) basis 
\begin{align*}
    (X_j^{1,0}:=X_j-iX_{n+j})_{1\leq j\leq n} &\text{ of } \Dcal^{1,0}:=\kernel ((J-i\cdot\Id)_{\vert \Dcal\otimes_\R\C})\\
    \text{and } (X_j^{0,1}:=X_j+iX_{n+j})_{1\leq j\leq n} &\text{ of } \Dcal^{0,1}:=\kernel ((J+i\cdot\Id)_{\vert \Dcal\otimes_\R\C}).
\end{align*}
Let $\theta^j := g(X_j,\cdot)$ for $1 \leq j \leq 2n$, then
\begin{align}
    d\theta = \sum\limits_{j=1}^n \theta^j\wedge\theta^{n+j}. \label{contactlabel2}
\end{align}
Thus, $d\theta$ is a symplectic form on $\Dcal$.
\begin{definition}
    The \textbf{Lefschetz operator} is defined as
    \begin{align*}
        L\colon \frA^k(\Dcal) &\rightarrow \frA^{k+2}(\Dcal)\\
        \alpha &\mapsto (d\theta)\wedge \alpha,
    \end{align*}
    and let
    \begin{align*}
        \Lambda:=L^\ast\colon \frA^k(\Dcal) \rightarrow \frA^{k-2}(\Dcal)
    \end{align*}
    be its adjoint operator.
\end{definition}
Let 
\begin{align*}
    \Lambda^{p,q}\Dcal :=& \Lambda^p (\Dcal^{1,0})^\ast \wedge \Lambda^q (\Dcal^{0,1})^\ast,\\
    \frA^{p,q}(\Dcal) :=& \Gamma (M,\Lambda^{p,q}\Dcal),\\
    \frA^k_\C(\Dcal) :=& \bigoplus\limits_{p+q=k}\frA^{p,q} (\Dcal). 
\end{align*}
Due to equation (\ref{contactlabel2}) and $d\theta\in\frA^{1,1}(\Dcal)$, we have
\begin{align}
    L\colon \frA^{p,q}(\Dcal) &\rightarrow \frA^{p+1,q+1}(\Dcal) \label{contactlabel17}
\end{align}
and
\begin{align*}
    \Lambda\colon \frA^{p,q}(\Dcal) \rightarrow \frA^{p-1,q-1}(\Dcal).
\end{align*}
The Lefschetz operator provides us with a decomposition of $\frA^\bullet(\Dcal)$. The space of \textbf{primitive forms} is defined as $\frA_{\prim}^\bullet(\Dcal):=\frA_{\prim}^\bullet(\Dcal)\cap \kernel (\Lambda)$.
\begin{lemma}
\label{contactlabel10}
    \begin{enumerate}
        \item There exists an orthogonal decomposition known as the \mbox{Lefschetz} decomposition:
        \begin{align}
            \frA^k (\Dcal) &= \bigoplus_{j\geq 0} L^j (\frA^{k-2j}_{\prim}(\Dcal)),\label{contactlabel9}\\
            \frA^{p,q} (\Dcal) &= \bigoplus_{j\geq 0} L^j (\frA^{p-j,q-j}_{\prim}(\Dcal)).\notag
        \end{align}
        \item For $k\geq n+1$, we have $\frA^k_{\prim} (\Dcal) = 0$.
        \item For $k\leq n$, $L^{n-k}\colon \frA^k_{\prim} (\Dcal)\rightarrow \frA^{2n-k} (\Dcal)$ is injective and $L^{n-k}\colon \frA^k (\Dcal)\rightarrow \frA^{2n-k} (\Dcal)$ is bijective.
        \item For $k\leq n$, $\frA^k_{\prim} (\Dcal)= \lbrace \alpha\in \frA^k (\Dcal)\mid L^{n-k+1}\alpha=0\rbrace$.
    \end{enumerate}
\end{lemma}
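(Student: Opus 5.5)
The whole lemma is pointwise linear algebra, so I will prove everything fibrewise. At each $x\in M$, $(\Dcal_x, d\theta_x, J_x)$ is a Hermitian symplectic vector space of real dimension $2n$. In the adapted basis (\ref{contactlabel27}) it is isomorphic to $\C^n$ with its standard Kähler form, by (\ref{contactlabel2}). All the operators $L$, $\Lambda$ and the bidegree decomposition are $C^\infty(M)$-linear, and the local bases can be chosen smoothly. So the statements for sections follow from the statements for $\Lambda^\bullet\Dcal_x^\ast$ together with smooth dependence on $x$; for example, the projections onto the summands are polynomials in $L$, $\Lambda$, $H$ and hence smooth bundle maps. (I read the definition of primitive forms as $\frA^\bullet_{\prim}(\Dcal)=\frA^\bullet(\Dcal)\cap\kernel(\Lambda)$.)

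The key step is the $\mathfrak{sl}_2$-structure. Let $H$ act on $\Lambda^k\Dcal_x^\ast$ by multiplication with $(k-n)$. I will verify the commutation relations
\begin{align*}
[\Lambda,L]=-H,\qquad [H,L]=2L,\qquad [H,\Lambda]=-2\Lambda
\end{align*}
on $\Lambda^\bullet\Dcal_x^\ast$ (up to a sign convention). The last two are immediate from degrees. For the first, I use that $\Lambda^\bullet\Dcal_x^\ast\cong\bigotimes_{j=1}^n\Lambda^\bullet\langle\theta^j,\theta^{n+j}\rangle$. Under this identification $L=\sum_j L_j$ with $L_j=\theta^j\wedge\theta^{n+j}\wedge$ acting on the $j$-th factor, and $\Lambda=\sum_j L_j^\ast$. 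On each 4-dimensional factor, $[L_j^\ast,L_j]$ is checked directly on the basis $1,\theta^j,\theta^{n+j},\theta^j\wedge\theta^{n+j}$. Summing over $j$ gives the relation, since operators on different factors commute (they are even). This computation is the main technical step. After it, the rest is standard $\mathfrak{sl}_2$ representation theory.

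Since $\Lambda=L^\ast$ and $H$ is self-adjoint, the finite-dimensional space $\Lambda^\bullet\Dcal_x^\ast$ is a unitary $\mathfrak{sl}_2$-representation and hence an orthogonal sum of irreducibles. Each irreducible is $\mathrm{span}\{L^j\alpha\}$ for a lowest weight vector $\alpha\in\kernel\Lambda$ of degree $k$. Its $H$-weight is $k-n\le 0$, and the string $\alpha,L\alpha,\dots,L^{n-k}\alpha$ is nonzero with $L^{n-k+1}\alpha=0$.

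The four claims then follow in order.
\begin{enumerate}
\item Grouping the irreducibles by degree gives the Lefschetz decomposition (\ref{contactlabel9}). Its orthogonality comes from distinct $\mathfrak{sl}_2$-isotypic pieces, or more simply from the self-adjointness of $L\Lambda$, whose eigenvalues separate the summands $L^j\frA^{k-2j}_{\prim}$. For the $(p,q)$-version I use that $L$ and $\Lambda$ have bidegree $(1,1)$ and $(-1,-1)$ by (\ref{contactlabel17}), so the decomposition refines bidegreewise.
\item Lowest weights are $\le 0$, so $k\le n$ for primitive $\alpha$.
\item Injectivity of $L^{n-k}$ on primitives is the nonvanishing of the string. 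Applying this on each summand $L^j\frA_{\prim}^{k-2j}$ shows that $L^{n-k}\colon\frA^k\to\frA^{2n-k}$ is injective. Bijectivity follows from the equality of dimensions, $\binom{2n}{k}=\binom{2n}{2n-k}$.
\item If $L^{n-k+1}\alpha=0$, I decompose $\alpha=\sum_j L^j\alpha_j$ with $\alpha_j$ primitive. Then $L^{n-k+1+j}\alpha_j$ lies in the $j$-th summand, and since the summands are independent each of these must vanish. For $j\ge1$ this contradicts injectivity of $L^{n-(k-2j)}$ on $\frA^{k-2j}_{\prim}$, because $n-k+1+j\le n-k+2j$; hence $\alpha_j=0$ for $j\ge 1$ and $\alpha=\alpha_0$ is primitive. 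The converse is the end of the string.
\end{enumerate}
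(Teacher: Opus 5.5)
Your proof is correct. It is the standard $\mathfrak{sl}_2$-representation argument for the fibrewise Lefschetz decomposition, and the paper gives no proof of its own here but simply defers to Weil and to Wells, Chapter~V, where essentially this argument appears. The only item the paper comments on is (4), which its remark obtains from Weil's identity $[L^j,\Lambda]=j(k-n+j-1)L^{j-1}$ combined with (3); this carries the same information as your ``end of the string'' direction together with your injectivity-on-summands argument for the converse.
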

\begin{remark}
    The statement in Lemma~\hyperref[contactlabel10]{\ref*{contactlabel10}.(4)} follows from the formula $[L^j,\Lambda]=j(k-n+j-1)L^{j-1}$ on $\frA^k_{\prim}(\Dcal)$ from \cite[p.~22]{WeilKaehler} and due to Lemma~\hyperref[contactlabel10]{\ref*{contactlabel10}.(3)}. Further information can also be found in \cite[Chapter~V]{WellsComplexManifolds}.
\end{remark}
\begin{example}[\cite{kitaokaspheres}, \cite{kitaokalensspaces}]
    \begin{enumerate}
        \item The sphere $S^{2n+1}:=\lbrace z\in\C^{n+1}\mid \lVert z \rVert = 1\rbrace$ is a compact contact manifold equipped with the contact form
    \begin{align*}
        \theta := i(\dolb - \partial)\lVert z \rVert^2
    \end{align*}
    and the associated Reeb vector field
    \begin{align*}
        T=\frac{i}{2}\sum\limits_{j=1}^{n+1} (z_j\frac{\partial}{\partial z_j}-\overline{z}_j\frac{\partial}{\partial \overline{z}_j}).
    \end{align*}
    The almost complex structure on $\Dcal$ is given by the standard fast-complex structure on $\C^{n+1}$.
    \item For integers $\mu,\nu_1,\ldots,\nu_{n+1}$ such that the $\nu_j$ are coprime to $\mu$, let $\Gamma$ be the subgroup of $(S^1)^{n+1}$ generated by
    \begin{align*}
        \gamma = (\gamma_1,\ldots,\gamma_{n+1}) := \left(\exp \left(2\pi i \frac{\nu_1}{\mu}\right),\ldots,\exp \left(2\pi i \frac{\nu_{n+1}}{\mu}\right)\right).
    \end{align*}
    The corresponding lens space is defined by
    \begin{align*}
        S^{2n+1}/\Gamma.
    \end{align*}
    The lens spaces are contact manifolds that inherit their contact structure from $S^{2n+1}$, since the sphere's contact structure is invariant under the action of $\Gamma$.
    \end{enumerate} 
\end{example}
\noindent Spheres are examples of homogeneous contact manifolds, which we will examine in more detail in a later section.
\subsection{The Rumin Complex}
\label{contactsect6.2}
The Rumin complex serves as a contact geometric analogue of the classical de Rham complex. By modifying the de Rham complex, essentially by dividing out the ideal generated by the contact form and its exterior derivative, it yields a complex specifically tailored to the study of the underlying contact structure of M. In the following section, we summarise its construction as introduced by Rumin in \cite{ruminformesdiffer} and \cite{ruminsubriemannian}.\\
From now on, we restrict our attention to compact contact manifolds of dimension $2n+1$ equipped with a globally defined contact form $\theta$. As mentioned already, the global existence of $\theta$ ensures that $M$ is orientable.\\

Let $\proj_{\frA^\bullet (\Dcal)}$ be the orthogonal projection onto $\frA^\bullet (\Dcal)$. We define
\begin{align*}
    d_\Dcal\colon \frA^k (\Dcal) &\rightarrow \frA^{k+1} (\Dcal)\\
    \alpha &\mapsto \proj_{\frA^\bullet (\Dcal)}(d\alpha),
\end{align*}
and due to the Cartan homotopy formula $\Lcal_T=d\circ \iota_T + \iota_T\circ d$ \cite[Theorem~2.3.7]{kkdiffgeoENG}, one gets
\begin{align*}
    d_\Dcal = d-(\theta\wedge)\circ \Lcal_T,
\end{align*}
since $\frA^\bullet (\Dcal)=\kernel (\iota_T)$. 
We also have
\begin{align*}
    \Lcal_T&\colon \frA^\bullet (\Dcal)\rightarrow \frA^\bullet (\Dcal),\\
    \Lcal_T&\colon \theta\wedge\frA^\bullet (\Dcal)\rightarrow \theta\wedge\frA^\bullet (\Dcal),
\end{align*}
because of $\Lcal_T(\alpha)=\iota_T(d\alpha)$ and $\Lcal_T(\theta\wedge\alpha) = \Lcal_T(\theta)\wedge \alpha + \theta\wedge\Lcal_T(\alpha) = \theta\wedge\Lcal_T(\alpha)$ due to $\Lcal_T(\theta)=(d\circ \iota_T+\iota_T\circ d)\theta=0$ for $\alpha\in\frA^\bullet(\Dcal)$.\\
For $\theta\wedge\alpha + \beta\in \theta\wedge\frA^{k-1}(\Dcal)\oplus \frA^k(\Dcal)$, the de Rham operator can now be decomposed with respect to the decomposition (\ref{contactlabel1})
\begin{align}
    d(\theta\wedge\alpha + \beta) = \theta\wedge (-d_\Dcal \alpha + \Lcal_T\beta) + ((d\theta)\wedge \alpha + d_\Dcal \beta). \label{contactlabel3}
\end{align}
Since $d^2 = 0$, the following operator equations on $\frA^\bullet (\Dcal)$ follow directly from (\ref{contactlabel3})
\begin{align}
    d_\Dcal^2&=-L\Lcal_T=-\Lcal_T L,\label{contactlabel15}\\
    0&=[d_\Dcal,L]=[d_\Dcal,\Lcal_T]=[L,\Lcal_T].\label{contactlabel16}
\end{align}
If, in addition, $\Lcal_T^\ast=-\Lcal_T$, then by taking the adjoint, we have
\begin{align*}
    0&=[d_\Dcal^\ast,\Lcal_T]=[\Lambda,\Lcal_T].
\end{align*}
Thus, it is easy to see that $\Lcal_T$ is compatible with the Lefschetz decomposition.\\
We also have $[\Lambda,d_\Dcal]=-J^{-1}d_\Dcal^\ast J$, where $(J\alpha)(Y_1,\ldots,Y_k) := \alpha (JY_1,\ldots,JY_k)$ for $\alpha\in \frA^k(\Dcal)$.
\begin{definition}
    An almost complex structure $J\in\Gamma(M,\End(\Dcal))$ is called \textbf{integrable} if
    \begin{align*}
        [\Gamma(M,\Dcal^{1,0}),\Gamma(M,\Dcal^{1,0})]\subset \Gamma(M,\Dcal^{1,0}).
    \end{align*}
\end{definition}
\noindent In the case of an integrable complex structure, we have
\begin{align*}
    d_\Dcal = \partial_\Dcal + \dolb_\Dcal
\end{align*}
on $\frA^{p,q}(\Dcal)$ with $\partial_\Dcal\colon \frA^{p,q}(\Dcal)\rightarrow \frA^{p+1,q}(\Dcal)$ and $\dolb_\Dcal\colon \frA^{p,q}(\Dcal)\rightarrow \frA^{p,q+1}(\Dcal)$.\\
\hfill\break
Since $d_\Dcal^2\neq 0$, this operator does not provide a complex. In order to obtain a complex, Rumin defined the following spaces:
\begin{align*}
    \Ecal^k (M) := \begin{cases}
        \frA^k_{\prim}(\Dcal) \text{ for } k\leq n,\\
        \theta\wedge(\frA^k(\Dcal)\cap\kernel(L)) \text{ for } k\geq n+1.
    \end{cases} 
\end{align*}
\begin{remark}
    We have $\frA^k_{\prim}(\Dcal)=0$ for $k\geq n+1$ and $\theta\wedge(\frA^k(\Dcal)\cap\kernel(L))=0$ for $k\leq n$, see Lemma~\ref{contactlabel10}.
\end{remark}
\noindent We can now define suitable differential operators on the spaces above, such that they form a complex:
\begin{itemize}
    \item For $k\leq n-1$, let $d_\mathrm{R}:=\proj_{\prim}\circ d_\Dcal\colon \frA^k_{\prim}(\Dcal)\rightarrow\frA^{k+1}_{\prim}(\Dcal)$, where $\proj_{\prim}$ is the orthogonal projection onto the primitive forms.
    \item For $k=n$, let $D:=(\theta\wedge)\circ (\Lcal_T+d_\Dcal L^{-1}d_\Dcal)\colon \frA^n_{\prim}(\Dcal)\rightarrow \theta\wedge(\frA^{n+1}(\Dcal)\cap\kernel(L))$.
    \item For $k\geq n+1$, let $d_\mathrm{R}:=d_{\vert \theta\wedge(\frA^k(\Dcal)\cap\kernel(L))}\colon \theta\wedge(\frA^k(\Dcal)\cap\kernel(L))\rightarrow \theta\wedge(\frA^{k+1}(\Dcal)\cap\kernel(L))$, which is well-defined because of $0=L\alpha=(d\theta)\wedge \alpha$ for $\theta\wedge\alpha\in \theta\wedge(\frA^k(\Dcal)\cap\kernel(L))$.
\end{itemize}
These operators provide a complex of differential operators, known as the \textbf{Rumin complex}:
\begin{align*}
    \mathcal{E}^0(M)\xrightarrow{d_\mathrm{R}}\mathcal{E}^1(M)\xrightarrow{d_\mathrm{R}}\ldots\xrightarrow{d_\mathrm{R}}\mathcal{E}^n(M)\xrightarrow{D}\mathcal{E}^{n+1}(M)\xrightarrow{d_\mathrm{R}}\ldots\xrightarrow{d_\mathrm{R}}\mathcal{E}^{2n+1}(M).
\end{align*}
\begin{remark}
        There is another equivalent definition of the Rumin complex, in which the spaces of degrees $0\leq k\leq n$ are replaced by $\frA^k(M)/\mathcal{I}^k$ with the ideal $\mathcal{I}^\bullet\subset \frA^\bullet(M)$, generated by $\theta$ and $d\theta$, as mentioned at the beginning of the section. Every equivalence class $[\alpha]$ in $\frA^k(M)/\mathcal{I}^k$ has a unique primitive representative $\alpha\in\frA^k_{\prim}(\Dcal)$. This provides us with the required isomorphism. Further details can be found in \cite{ruminformesdiffer} or \cite{tessmer}.
\end{remark}
\noindent However, we will be working with a rescaled version of this complex.
\begin{definition}
    The \textbf{(rescaled) Rumin complex} is defined by
    \begin{align*}
        \mathcal{E}^0(M)\xrightarrow{d_\Ecal}\mathcal{E}^1(M)\xrightarrow{d_\Ecal}\ldots\xrightarrow{d_\Ecal}\mathcal{E}^n(M)\xrightarrow{D}\mathcal{E}^{n+1}(M)\xrightarrow{d_\Ecal}\ldots\xrightarrow{d_\Ecal}\mathcal{E}^{2n+1}(M),
    \end{align*}
    where $d_\Ecal:=\frac{1}{\sqrt{\vert n-k\vert}} d_\mathrm{R}$ on $\Ecal^k(M), k\neq n$. The cohomology of this complex is denoted by $H^\bullet_\Ecal(M)$.\\
    We define the \textbf{Rumin Laplacian} by
    \begin{align*}
        \Delta_\Ecal := \begin{cases}
            (d_\Ecal d_\Ecal^\ast + d_\Ecal^\ast d_\Ecal)^2\colon \Ecal^k(M)\rightarrow \Ecal^k(M) \text{ for } k\not\in {n,n+1},\\
            D^\ast D + (d_\Ecal d_\Ecal^\ast)^2\colon \Ecal^n(M)\rightarrow \Ecal^n(M),\\
            (d_\Ecal^\ast d_\Ecal)^2 + DD^\ast\colon \Ecal^{n+1}(M)\rightarrow \Ecal^{n+1}(M).
        \end{cases}
    \end{align*} 
\end{definition}
\begin{lemma}[{\cite[pp.~286-290]{ruminformesdiffer}}]
\label{contactlabel7}
    \begin{enumerate}
        \item We have $\kernel (\Delta_\Ecal)\cap \Ecal^k(M)\cong H^k_\Ecal(M)$ and the cohomology of the Rumin complex is isomorphic to the cohomology of the de Rham complex of $M$.
        \item The Hodge $\ast$ operator is an isomorphism $\ast\colon \Ecal^k(M)\rightarrow \Ecal^{2n+1-k}(M)$.
        \item For $k\neq n+1$, we have $d_\Ecal^\ast = (-1)^k\ast d_\Ecal \ast$ on $\Ecal^k(M)$, and on $\Ecal^n(M)$ we have $D^\ast = (-1)^{n+1} \ast D\ast$.
        \item We have $\Delta_\Ecal\ast=\ast\Delta_\Ecal$.
    \end{enumerate}
\end{lemma}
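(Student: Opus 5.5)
We prove the four assertions of Lemma~\ref{contactlabel7} in the order (2), (3), (4), (1). Assertion (1) is the only one requiring analysis; the others reduce to pointwise linear algebra on $\Lambda^\bullet T^\ast M$ together with the decomposition (\ref{contactlabel1}).

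For (2), we work pointwise in the adapted orthonormal coframe $\theta,\theta^1,\ldots,\theta^{2n}$ from (\ref{contactlabel27}) and (\ref{contactlabel2}). Write $\ast_\Dcal$ for the Hodge star of $\Dcal^\ast$ with respect to $d\theta(\cdot,J\cdot)$ and the orientation $(d\theta)^n/n!$.
\begin{itemize}
\item For $\alpha\in\frA^k(\Dcal)$ one checks $\ast\alpha=\pm\theta\wedge\ast_\Dcal\alpha$, and conversely $\ast(\theta\wedge\beta)=\pm\ast_\Dcal\beta$.
\item The symplectic linear algebra of Lemma~\ref{contactlabel10} gives $\ast_\Dcal L=\Lambda\ast_\Dcal$ up to sign.
\item Hence $\ast_\Dcal$ maps $\kernel(\Lambda)$ in degree $k\le n$ onto $\kernel(L)$ in degree $2n-k$. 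Here we use Lemma~\ref{contactlabel10}.(3),(4), i.e.\ primitive forms are exactly $\kernel(L^{n-k+1})$, and its dual statement that $\kernel(L)$ in degree $\ge n$ is the $\ast_\Dcal$-image of primitives.
\end{itemize}
This shows $\ast\colon\Ecal^k(M)\to\Ecal^{2n+1-k}(M)$, and since $\ast^2=\pm1$ it is an isomorphism.

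For (3), we compute the formal $L^2$-adjoints using Stokes' theorem on the closed manifold $M$. Take $\alpha\in\Ecal^k(M)$ and $\beta\in\Ecal^{k+1}(M)$ with $k\ge n+1$.
\begin{itemize}
\item Here $d_\mathrm R$ is the restriction of $d$, so $\langle d\alpha,\beta\rangle_{L^2}=\int_M d\alpha\wedge\ast\beta=\pm\int_M\alpha\wedge d\ast\beta$.
\item Since $\ast\beta\in\Ecal^{2n-k}(M)$ by (2), we may replace $d\ast\beta$ by $d_\mathrm R\ast\beta$: the components of $d\ast\beta$ that are not primitive lie in $\theta\wedge\cdot$ or in $L(\cdot)$.
\item Both of these pair to zero against $\alpha\in\theta\wedge\kernel(L)$, because $\theta\wedge\theta=0$ and $\alpha\wedge L\gamma=(L\alpha)\wedge\gamma=0$.
\end{itemize}
The case $k\le n-1$ is symmetric. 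The rescaling factors $1/\sqrt{|n-k|}$ match under $k\mapsto 2n-k$, and a sign count gives $d_\Ecal^\ast=(-1)^k\ast d_\Ecal\ast$.

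For $D$, write $D=d\circ\tilde\pi$, where $\tilde\pi\colon\alpha\mapsto\alpha-\theta\wedge L^{-1}d_\Dcal\alpha$ is the unique lift of $\alpha$ with $d\tilde\pi\alpha\in\theta\wedge\kernel(L)$; this is Rumin's description. The same Stokes argument applied to $\int d\tilde\pi\alpha\wedge\ast\beta$ then gives the formula for $D^\ast$. Assertion (4) follows by substituting (3) into each of the three cases of the definition of $\Delta_\Ecal$, noting that the degrees $n$ and $n+1$ are interchanged by $\ast$.

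For (1), we proceed in three steps.
\begin{enumerate}
\item \emph{Homotopy equivalence with de Rham.} Introduce the subcomplex $\mathcal J^\bullet=\{\gamma:\theta\wedge\gamma=0,\ \theta\wedge d\gamma=0\}$ together with the ideal $\mathcal I^\bullet$. Show that the de Rham complex retracts onto $\frA/\mathcal I$ in low degrees and onto $\mathcal J$ in high degrees, both acyclically. Concretely, $\mathcal I^k$ is acyclic for $k\le n$ because $L$ is injective there (Lemma~\ref{contactlabel10}.(3)), so the decomposition (\ref{contactlabel3}) lets one solve $\theta\wedge\cdot$ and $d\theta\wedge\cdot$ inductively. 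Dually, $\frA/\mathcal J$ is acyclic in degrees $\ge n+1$.
\item \emph{Gluing.} Connect the two pieces by $D$ via the lift $\tilde\pi$. This yields a chain map from the Rumin complex into the de Rham complex inducing an isomorphism on cohomology.
\item \emph{Hodge theory.} Show that $\Delta_\Ecal$ is hypoelliptic (Rumin) with discrete spectrum on the compact manifold $M$. The usual Hodge decomposition then gives $\kernel\Delta_\Ecal=\kernel d_\Ecal\cap\kernel d_\Ecal^\ast\cong H^k_\Ecal(M)$. The powers are chosen so that all three cases of $\Delta_\Ecal$ are of the same order $4$, which is what makes the mixed operators in degrees $n$ and $n+1$ hypoelliptic.
\end{enumerate}
We expect the main obstacle to be the analytic part of step 3: establishing hypoellipticity, for which we would invoke Rumin's Heisenberg-calculus argument, together with the bookkeeping of the acyclicity of $\mathcal I$ near the middle degree.
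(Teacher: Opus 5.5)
Your proposal is correct and follows Rumin's original argument: the star intertwining $L$ and $\Lambda$, Stokes' theorem for the adjoints, acyclicity of $\mathcal{I}^\bullet$ in low degrees and of $\frA^\bullet/\mathcal{J}^\bullet$ in high degrees glued by $D$, and hypoellipticity (deferred to Rumin) for the Hodge isomorphism. This is exactly what the paper relies on, since it gives no proof beyond the citation to Rumin and the remark that $a_{k-1}=a_{2n+1-k}$, which is your observation that $1/\sqrt{|n-k|}$ is invariant under $k\mapsto 2n-k$.
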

\begin{remark}
    In \cite{rumin2012analytic}, it is stated that $d_\mathrm{R}^\ast = (-1)^k\ast d_\mathrm{R} \ast$ on $\Ecal^k(M)$; however, since $a_{k-1}=a_{2n+1-k}$, this implies  the equation above for the rescaled operator.
\end{remark}
The following lemma shows why it is useful to work with the rescaled Rumin complex. For an integrable complex structure $J$, note that ${d_\Ecal}_{\vert \Ecal^k(M)}$ for $k<n$ can also be decomposed, in the same way as $d_\Dcal$, into $d_\Ecal = \partial_\Ecal + \dolb_\Ecal$, since the Lefschetz decomposition is compatible with the bigradation of $\frA^{\bullet,\bullet} (\Dcal)$.\\
This allows us to define
\begin{align*}
    \Delta_{\partial_\Ecal} := \partial_\Ecal\partial_\Ecal^\ast + \partial_\Ecal^\ast\partial_\Ecal, && \Delta_{\dolb_\Ecal} := \dolb_\Ecal\dolb_\Ecal^\ast + \dolb_\Ecal^\ast\dolb_\Ecal.
\end{align*}
\begin{definition}
    A contact manifold $M$ with contact form $\theta$ and an integrable complex structure $J$ on the contact structure $\Dcal$ such that $\Lcal_T(J)=0$ is called a \textbf{Sasakian manifold}.
\end{definition}
\begin{lemma}[\cite{ruminsubriemannian}, \cite{rumin2026}]
\label{contactlabel8}
    If the almost complex structure $J$ is integrable on $\Dcal$ and $\Lcal_T (J)=0$, i.e.\ $M$ is Sasakian, then the following equations hold:
    \begin{enumerate}
        \item $0=\partial_\Ecal^2=\dolb_\Ecal^2=\partial_\Ecal\dolb_\Ecal + \dolb_\Ecal\partial_\Ecal$ on $\Ecal^k(M)$ for $k\leq n-2$,
        \item $0=\partial_\Ecal^\ast\dolb_\Ecal + \dolb_\Ecal^\ast\partial_\Ecal = \dolb_\Ecal^\ast\partial_\Ecal + \partial_\Ecal^\ast\dolb_\Ecal$ on $\Ecal^k(M)$ for $k\leq n-1$,
        \item $i\Lcal_T = \Delta_{\dolb_\Ecal} - \Delta_{\partial_\Ecal}$ on $\Ecal^k(M)$ for $k\leq n-1$,
        \item $\sqrt{\Delta_\Ecal}:=d_\Ecal d_\Ecal^\ast + d_\Ecal^\ast d_\Ecal = \Delta_{\dolb_\Ecal} + \Delta_{\partial_\Ecal}$ on $\Ecal^k(M)$ for $k\leq n-1$,
        \item $\Delta_\Ecal$ commutes with $J$ on $\Ecal^k(M)$ for $k\leq n$,
        \item $\Lcal_T^\ast = -\Lcal_T$.
    \end{enumerate}
\end{lemma}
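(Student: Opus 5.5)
The plan is to reduce all six statements to pointwise (algebraic) identities for the transverse Kähler structure on $\Dcal$, together with the Sasakian condition $\Lcal_T J=0$. We start with (6), since the other items need it. The Reeb flow preserves $\theta$ ($\Lcal_T\theta=0$), hence $d\theta$. By hypothesis it also preserves $J$, so it preserves $g=\theta\otimes\theta+d\theta(\cdot,J\cdot)$. Thus $T$ is a Killing field. It also preserves $\operatorname{dvol}_g$, since $\operatorname{div}T=0$ follows from $\Lcal_T(\theta\wedge(d\theta)^n)=0$. On a compact manifold the Lie derivative along a Killing field is skew-adjoint on forms, and it preserves $\frA^\bullet(\Dcal)$ and the Lefschetz decomposition by (\ref{contactlabel16}). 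Invariance of $J$ further gives that $\Lcal_T$ preserves the bigrading $\frA^{p,q}(\Dcal)$.

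For (1) we would write $d_\Dcal=\partial_\Dcal+\dolb_\Dcal$ and split (\ref{contactlabel15}) by bidegree. Since $L$ has type $(1,1)$, the identity $d_\Dcal^2=-L\Lcal_T$ yields $\partial_\Dcal^2=\dolb_\Dcal^2=0$ and $\partial_\Dcal\dolb_\Dcal+\dolb_\Dcal\partial_\Dcal=-L\Lcal_T$. On primitive forms of degree $k\le n-2$ we compose with $\proj_{\prim}$. The correction terms are multiples of $L$ and are killed by the projection. We also check that $\proj_{\prim}\partial_\Dcal\proj_{\prim}\partial_\Dcal=\proj_{\prim}\partial_\Dcal^2$ on primitives. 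This holds because $\partial_\Dcal$ commutes with $L$: the non-primitive part of $\partial_\Dcal\alpha$ is $L\beta$, and $\partial_\Dcal L\beta=L\partial_\Dcal\beta$ projects to zero. The rescaling factors $1/\sqrt{|n-k|}$ are harmless constants.

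For (2)–(4) the key step is the contact Kähler identities. Writing $J$ for the induced operator on forms and using $[\Lambda,d_\Dcal]=-J^{-1}d_\Dcal^\ast J$, we split by bidegree to obtain
\begin{align*}
[\Lambda,\dolb_\Dcal]=-i\partial_\Dcal^\ast,\qquad [\Lambda,\partial_\Dcal]=i\dolb_\Dcal^\ast ,
\end{align*}
up to sign conventions. On primitive $k$-forms with $k\le n-1$ we then compute $\partial_\Ecal^\ast$ and $\dolb_\Ecal^\ast$ explicitly. For $\alpha\in\frA^{k+1}_{\prim}$ one has $\dolb_\Ecal^\ast\alpha=\frac{1}{\sqrt{n-k}}\dolb_\Dcal^\ast\alpha$, and $\Lambda\alpha=0$ gives $\dolb_\Dcal^\ast\alpha=\pm i[\Lambda,\partial_\Dcal]\alpha=\pm i\Lambda\partial_\Dcal\alpha$. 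Since $\partial_\Dcal\alpha=\proj_{\prim}\partial_\Dcal\alpha+L\gamma$, and $\Lambda L=(n-k)$ on primitive $k$-forms by the commutator formula in the Remark after Lemma~\ref{contactlabel10}, the component $\gamma$ is expressible through $\dolb_\Dcal^\ast\alpha$. This is precisely the origin of the factors $\sqrt{n-k}$.

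Substituting these expressions into $\partial_\Ecal^\ast\dolb_\Ecal+\dolb_\Ecal^\ast\partial_\Ecal$ gives (2). The difference $\Delta_{\dolb_\Ecal}-\Delta_{\partial_\Ecal}$ collapses to the $L\Lcal_T$ term from (1) contracted by $\Lambda$, which gives $i\Lcal_T$ and hence (3). Expanding $d_\Ecal d_\Ecal^\ast+d_\Ecal^\ast d_\Ecal$ and cancelling the cross terms via (2) gives (4). Item (5) follows on degrees $k\le n-1$ from (4), since $J$ acts on $\frA^{p,q}$ by the scalar $i^{q-p}$ and both $\Delta_{\partial_\Ecal}$ and $\Delta_{\dolb_\Ecal}$ preserve bidegree. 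In degree $n$ we check that $D^\ast D$ and $(d_\Ecal d_\Ecal^\ast)^2$ preserve the bidegree: $D=\theta\wedge(\Lcal_T+d_\Dcal L^{-1}d_\Dcal)$, and the type-$(1,1)$-shifting pieces recombine by the same bidegree splitting.

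The main obstacle is the careful bookkeeping of the rescaling constants in (3) and (4) near degree $n-1$, where $d_\Ecal$ lands in $\Ecal^n$ and the factor $\frac{1}{\sqrt{n-k}}$ interacts with $\Lambda L=(n-k)$. First, we would verify the identities on primitive forms of pure bidegree in the flat Heisenberg model, where everything is explicit. We would then argue that the general identities are pointwise algebraic consequences of the Kähler identities together with (\ref{contactlabel15}), so the model computation suffices.
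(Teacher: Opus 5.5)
The paper gives no proof of this lemma; it quotes it from Rumin. So the question is whether your sketch is a correct derivation.

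Your arguments for (6), (1), (3) and (4) are essentially right. They are purely formal consequences of $\partial_\Dcal^2=\dolb_\Dcal^2=0$, $\partial_\Dcal\dolb_\Dcal+\dolb_\Dcal\partial_\Dcal=-L\Lcal_T$, the identities (\ref{contactlabel38}), and $[\Lambda,L]=n-k$ on $k$-forms. For example, the projection corrections combine to give
$\Delta_{\dolb_\Ecal}-\Delta_{\partial_\Ecal}=\frac{1}{n-k}\proj_{\prim}(\Delta_{\dolb_\Dcal}-\Delta_{\partial_\Dcal})=\frac{1}{n-k}\,i[\Lambda,L]\Lcal_T=i\Lcal_T$ on $\Ecal^k(M)$. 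So the Heisenberg-model check in your last paragraph is unnecessary. On its own it would not prove the general case anyway.

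The genuine gap is (5) in degree $n$. It is not true that $D^\ast D$ and $(d_\Ecal d_\Ecal^\ast)^2$ separately preserve bidegree, and splitting (\ref{contactlabel15}) by bidegree cannot show it. For $\gamma=L^{-1}\partial_\Dcal\alpha$ you only get $L\partial_\Dcal\gamma=\partial_\Dcal^2\alpha=0$, i.e.\ $\partial_\Dcal L^{-1}\partial_\Dcal\alpha\in\frA^n_{\prim}(\Dcal)$, because $L$ is not injective on $\frA^n(\Dcal)$.

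Concretely, there are no primitive $(n+1)$-forms, so for $\alpha\in\frA^n_{\prim}(\Dcal)$ one has $\partial_\Dcal\alpha=iL\dolb_\Dcal^\ast\alpha$ and $\dolb_\Dcal\alpha=-iL\partial_\Dcal^\ast\alpha$. Identify $\Ecal^{n+1}(M)$ with $\frA^n_{\prim}(\Dcal)$ via $\theta\wedge$. Then $D=A+S$, where:
\begin{itemize}
\item $A=\Lcal_T+i(\dolb_\Dcal\dolb_\Dcal^\ast-\partial_\Dcal\partial_\Dcal^\ast)$ is skew-adjoint and bidegree-preserving;
\item $S=i(\partial_\Dcal\dolb_\Dcal^\ast-\dolb_\Dcal\partial_\Dcal^\ast)$ is self-adjoint and shifts bidegree by $\pm(1,-1)$.
\end{itemize}
Already for $n=1$, $S=-i\dolb_\Dcal\partial_\Dcal^\ast$ on $(1,0)$-forms, which is a nonzero second-order operator.

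Hence $D^\ast D=S^2-A^2+[S,A]$. With $Q:=\Delta_{\partial_\Dcal}+\Delta_{\dolb_\Dcal}$ one finds $[S,A]=-(\partial_\Dcal Q\dolb_\Dcal^\ast+\dolb_\Dcal Q\partial_\Dcal^\ast)\neq 0$.

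On the other side, (4) in degree $n-1$ together with $d_\Ecal^2=0$ gives $(d_\Ecal d_\Ecal^\ast)^2=d_\Ecal(\Delta_{\partial_\Ecal}+\Delta_{\dolb_\Ecal})d_\Ecal^\ast$. Its bidegree-changing part on $\frA^n_{\prim}(\Dcal)$ works out to $+(\partial_\Dcal Q\dolb_\Dcal^\ast+\dolb_\Dcal Q\partial_\Dcal^\ast)$.

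So (5) in degree $n$ holds only because these mixed terms cancel between the two summands of $\Delta_\Ecal$. That cancellation is the real content of this case. Your sketch asserts a false termwise statement instead of proving it.

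A smaller point concerns (2). As printed, $\partial_\Ecal^\ast\dolb_\Ecal+\dolb_\Ecal^\ast\partial_\Ecal=0$ cannot hold. The two summands shift bidegree by $(-1,1)$ and $(1,-1)$, so the identity would force $\partial_\Ecal^\ast\dolb_\Ecal=0$, which fails for $n\geq 2$ on $(1,0)$-forms. The statement evidently has a typo.

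What your substitution actually yields, using $\partial_\Dcal^\ast\dolb_\Dcal=-\dolb_\Dcal\partial_\Dcal^\ast$, is $\partial_\Ecal^\ast\dolb_\Ecal=\frac{1}{n-k+1}\partial_\Dcal^\ast\dolb_\Dcal=-\dolb_\Ecal\partial_\Ecal^\ast$ on $\Ecal^k(M)$. That is, $\partial_\Ecal^\ast\dolb_\Ecal+\dolb_\Ecal\partial_\Ecal^\ast=0$, and by conjugation $\dolb_\Ecal^\ast\partial_\Ecal+\partial_\Ecal\dolb_\Ecal^\ast=0$. These anticommutators, not the printed combination, are exactly the cross terms you must cancel in (4). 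You should state (2) in that form.
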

\noindent In the next lemma, we shall see why the condition $\Lcal_T(J)=0$ is necessary for $\Lcal_T$ to be compatible with the bigradation of $\frA^{\bullet,\bullet}(\Dcal)$.
\begin{lemma}
\label{contactlabel18}
    If the almost complex structure $J$ is integrable on $\Dcal$ and $\Lcal_T (J)=0$, then we have
    \begin{enumerate}
        \item \label{contactlabel18.1}$\Lcal_T\colon \frA^{p,q} (\Dcal)\rightarrow \frA^{p,q} (\Dcal)$,
        \item $\dolb_\Dcal^2=0=\partial_\Dcal^2$ and $\partial_\Dcal\dolb_\Dcal + \dolb_\Dcal\partial_\Dcal=-L\Lcal_T=-\Lcal_T L$,
        \item $\dolb_\Dcal\circ \Lcal_T = \Lcal_T\circ \dolb_\Dcal$ and $\dolb_\Dcal^\ast\circ \Lcal_T = \Lcal_T\circ \dolb_\Dcal^\ast$.
    \end{enumerate}
\end{lemma}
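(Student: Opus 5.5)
We argue pointwise with the local frame (\ref{contactlabel27}) and the Cartan formula; the statement is about how $\Lcal_T$ interacts with the type decomposition and with $d_\Dcal$.

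\emph{Part (1).} For $\alpha\in\frA^{p,q}(\Dcal)$ and $Y_1,\ldots,Y_k\in\Gamma(M,\Dcal)$, the Leibniz rule for the Lie derivative gives
\begin{align*}
(\Lcal_T\alpha)(Y_1,\ldots,Y_k)=T(\alpha(Y_1,\ldots,Y_k))-\sum_j\alpha(Y_1,\ldots,[T,Y_j],\ldots,Y_k).
\end{align*}
We already know that $\Lcal_T$ preserves $\frA^\bullet(\Dcal)$. Moreover $[T,Y]=\Lcal_TY\in\Gamma(M,\Dcal)$ for $Y\in\Gamma(M,\Dcal)$, because $\theta([T,Y])=T\theta(Y)-Y\theta(T)-d\theta(T,Y)=0$. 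The hypothesis $\Lcal_T(J)=0$ means $[T,JY]=J[T,Y]$, so $\Lcal_T$ maps $\Gamma(M,\Dcal^{1,0})$ to itself and $\Gamma(M,\Dcal^{0,1})$ to itself. A $(p,q)$-form is characterised by vanishing whenever the numbers of $\Dcal^{1,0}$ and $\Dcal^{0,1}$ arguments are not $(p,q)$. The formula above preserves these numbers, so $\Lcal_T\alpha$ is again of type $(p,q)$.

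\emph{Part (2).} Integrability of $J$ gives $d_\Dcal=\partial_\Dcal+\dolb_\Dcal$ with no components of other bidegree; this decomposition was quoted earlier. We then take (\ref{contactlabel15}), $d_\Dcal^2=-L\Lcal_T=-\Lcal_TL$, and split it by bidegree.
\begin{itemize}
\item The left side has components $\partial_\Dcal^2$ in bidegree $(p+2,q)$, $\dolb_\Dcal^2$ in bidegree $(p,q+2)$, and $\partial_\Dcal\dolb_\Dcal+\dolb_\Dcal\partial_\Dcal$ in bidegree $(p+1,q+1)$.
\item By (\ref{contactlabel17}) and part (1), $L\Lcal_T$ maps $\frA^{p,q}(\Dcal)$ into $\frA^{p+1,q+1}(\Dcal)$.
\end{itemize}
Comparing components gives $\partial_\Dcal^2=0=\dolb_\Dcal^2$ and $\partial_\Dcal\dolb_\Dcal+\dolb_\Dcal\partial_\Dcal=-L\Lcal_T=-\Lcal_TL$.

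\emph{Part (3).} By (\ref{contactlabel16}) we have $[d_\Dcal,\Lcal_T]=0$. Since $\Lcal_T$ preserves bidegree, the $(p,q+1)$-component of this identity reads $[\dolb_\Dcal,\Lcal_T]=0$. For the adjoint statement we use Lemma~\ref{contactlabel8}(6), $\Lcal_T^\ast=-\Lcal_T$. Taking adjoints of $\dolb_\Dcal\Lcal_T=\Lcal_T\dolb_\Dcal$ gives $-\Lcal_T\dolb_\Dcal^\ast=-\dolb_\Dcal^\ast\Lcal_T$, which is the claim.

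The point needing the most care is part (1): one must check both that $[T,Y]$ stays in $\Dcal$ and that it respects the $J$-eigenbundles, which is exactly where $\Lcal_T(J)=0$ enters. Beyond that the argument is bidegree bookkeeping; the only subtlety is that we may compare components only after the decomposition $d_\Dcal=\partial_\Dcal+\dolb_\Dcal$ is available from integrability.
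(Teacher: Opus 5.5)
Your proof is correct. The paper states this lemma without proof, and your argument (type preservation from $[T,\Gamma(M,\Dcal)]\subset\Gamma(M,\Dcal)$ and $[T,JY]=J[T,Y]$, then splitting (\ref{contactlabel15}) and (\ref{contactlabel16}) by bidegree, then taking adjoints using $\Lcal_T^\ast=-\Lcal_T$) is the reasoning the paper's own derivations of (\ref{contactlabel15}), (\ref{contactlabel16}) and $[d_\Dcal^\ast,\Lcal_T]=0$ set up.

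If you prefer not to rely on Lemma~\ref{contactlabel8}(6), you can prove skew-adjointness directly. From $\Lcal_T\theta=0$, $\Lcal_T(d\theta)=0$ and $\Lcal_T(J)=0$ you get $\Lcal_T g=0$, so $T$ is a Killing field, and this gives $\Lcal_T^\ast=-\Lcal_T$.
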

\begin{remark}
    From the previous result, we can see that the condition $\Lcal_T(J)=0$ is indeed essential for further calculations.
\end{remark}
\noindent According to \cite[Equations~(3.1),~(3.3)]{KitaokaHarmonicSasakian}, we also have
\begin{align}
\partial_\Dcal^\ast &= i[\Lambda,\dolb_\Dcal],
&\qquad
\dolb_\Dcal^\ast &= -i[\Lambda,\partial_\Dcal], \notag\\
\partial_\Dcal &= i[L,\dolb_\Dcal^\ast],
&\qquad
\dolb_\Dcal &= -i[L,\partial_\Dcal^\ast],\label{contactlabel38}
\end{align}
and $[\partial_\Dcal,\dolb_\Dcal^\ast]=0=[\dolb_\Dcal,\partial_\Dcal^\ast]$.\\

If $J$ is integrable and $\Lcal_T(J)=0$, then K\"ahler-type identities hold on the contact manifold $M$ for the operators arising, in particular, from the rescaled Rumin complex. In \cite[Chapter~II]{WeilKaehler} one finds the K\"ahler identities for a K\"ahler manifold $N$. The equations (\ref{contactlabel38}) correspond exactly to the equations in \cite[Equation~(VII)] {WeilKaehler}, where instead of $\partial_\Dcal,\dolb_\Dcal$ in the case of the K\"ahler manifold $N$, one of course uses $\partial,\dolb$ with $d=\partial+\dolb$ for the de Rham operator $d$ and the Dolbeault operator $\dolb$ on $N$. Furthermore, the operator $\Delta_\Ecal$ commutes with $J$, which is also the case for the Hodge Laplacian of the K\"ahler manifold, see \cite[Corollary~4.11]{WellsComplexManifolds}. However, there is a significant difference when it comes to the Laplacians. Let
\begin{align*}
    \Delta_{\partial} := \partial\partial^\ast + \partial^\ast\partial, && \Delta_{\dolb} := \dolb\dolb^\ast + \dolb^\ast\dolb
\end{align*}
on $\frA^{\bullet,\bullet}(N)$ and denote the Hodge Laplacian by $\Delta=dd^\ast+d^\ast d$ of $N$. The classical K\"ahler identities \cite[Theorem~2]{WeilKaehler} are stated as follows
\begin{align*}
    2\Delta = \Delta_{\partial} = \Delta_{\dolb} = \Delta_{\partial}+\Delta_{\dolb}.
\end{align*}
A comparison with the equations
\begin{enumerate}
    \item $i\Lcal_T = \Delta_{\dolb_\Ecal} - \Delta_{\partial_\Ecal}$ on $\Ecal^k(M)$ for $k\leq n-1$,
    \item $\sqrt{\Delta_\Ecal}:=d_\Ecal d_\Ecal^\ast + d_\Ecal^\ast d_\Ecal = \Delta_{\dolb_\Ecal} + \Delta_{\partial_\Ecal}$ on $\Ecal^k(M)$ for $k\leq n-1$,
\end{enumerate}
from Lemma~\ref{contactlabel8} shows that, in particular, $\Delta_{\dolb_\Ecal} \neq \Delta_{\partial_\Ecal}$ generally holds here, which differs from the K\"ahler identities where $\Delta_{\partial} = \Delta_{\dolb}$. However, for $\alpha\in\Ecal^k(M)$ with $\Lcal_T(\alpha)=0$, we do have $\Delta_{\dolb_\Ecal}\alpha = \Delta_{\partial_\Ecal}\alpha$. The additional direction along the Reeb vector field therefore makes the difference in the case of the contact manifold.\\

\noindent However, if $M$ is a compact Sasakian manifold, then Kitaoka showed in \cite[Theorem~1.1]{KitaokaHarmonicSasakian} that
\begin{align*}
    \kernel (\Delta\colon \frA^k(M)\rightarrow \frA^k (M)) = \kernel (\Delta_\Ecal\colon \Ecal^k(M)\rightarrow \Ecal^k(M)),
\end{align*}
where $\Delta$ is the Hodge Laplacian of $M$.\\
For compact homogeneous contact manifolds, we shall cite a result by Boothby \cite{contactmanifoldsNURboothby} in Section~\ref{contactsect8.1}, which states that for every compact homogeneous contact manifold $G/H$ we obtain a K\"ahler manifold $G/K$ with $H\subset K$ such that $G/H\rightarrow G/K$ is a $S^1$-principal bundle.
\section{Analytic Torsion of the Rumin Complex}
\label{contactsect7}
\subsection{Definition of the Analytic Contact Torsion}
\label{contactsect7.1}
Unlike the Hodge Laplacian the Rumin Laplacian is not elliptic, but it is maximally hypoelliptic on every compact contact manifold, according to \cite[p.~290]{ruminformesdiffer}. Therefore, the Rumin Laplacian is a self-adjoint operator, which possesses a discrete, real, non-negative spectrum and a smooth heat kernel $k_t(x,y)$. The heat kernel has an asymptotic on its diagonal $k_t(x,x)$ as $t\searrow 0$ according to \cite[Theorem~3.1]{rumin2012analytic}, that allows one to define a well\nobreakdash-defined zeta function. This asymptotic behaviour is crucial, as it guarantees that the integral defining the Mellin transform converges for $\operatorname{Re}(s)\gg 1$, and it ensures that the resulting zeta function admits a meromorphic continuation to $\C$, which is holomorphic in $s=0$.

We shall now recall the definition of the zeta function and other relevant results from \cite{rumin2012analytic} and \cite{rumin2026}.

\noindent The zeta function is defined for $\operatorname{Re} (s)\gg 1$ by
\begin{align*}
    \zeta(\Delta_\Ecal\vert \Ecal^k(M)) := \dim (H_\Ecal^k(M)) + \sum\limits_{\lambda\in\operatorname{spec}^\ast (\Delta_\Ecal\vert \Ecal^k(M))} \frac{1}{\lambda^s},
\end{align*}
where $\operatorname{spec}^\ast (\Delta_\Ecal\vert \Ecal^k(M))$ denotes the non-zero spectrum of $\Delta_\Ecal$ on $\Ecal^k(M)$.\\
For $\alpha\in \Ecal^k(M)$ and $x\in M$, let
\begin{align*}
    (e^{-t\Delta_\Ecal}\alpha)_{\vert x} := \int\limits_M k_t(x,y)\alpha_{\vert y}{\operatorname{dvol}_g}_{\vert y}
\end{align*}
denote the heat operator, which is trace class. The zeta function $\zeta(\Delta_\Ecal\vert \Ecal^k(M))$ can be meromorphically continued to $\C$ with the help of the Mellin transform. It is well-defined for $\operatorname{Re} (s)\gg 1$ and meromorphic with (at worst) simple poles at $s\in\lbrace \frac{n+1-j}{2}\mid j\in\N\rbrace\setminus (-\N)$. Therefore, we have
\begin{align*}
    \zeta(\Delta_\Ecal\vert \Ecal^k(M))(s) := \dim (H_\Ecal^k(M)) + \frac{1}{\Gamma (s)}\int\limits_0^{+\infty} \Tr^\ast(e^{-t\Delta_\Ecal}\vert \Ecal^k(M)) t^{s-1}dt,
\end{align*}
where $\Tr^\ast$ denotes the trace over the non-zero spectrum of $\Delta_\Ecal$ and $\Gamma (s)=\int\limits_0^{+\infty} t^{s-1}e^{-t}dt$ is the Gamma function.
\begin{remark} We have
    \begin{enumerate}
        \item $\lambda\in \operatorname{spec} (\Delta_\Ecal\vert \Ecal^k(M)) \Leftrightarrow e^{-t\lambda}\in \operatorname{spec} (e^{-t\Delta_\Ecal}\vert \Ecal^k(M))$,
        \item $\Tr^\ast(e^{-t\Delta_\Ecal}\vert \Ecal^k(M)) = \Tr(e^{-t\Delta_\Ecal}\vert \Ecal^k(M)) - \dim(H^k_\Ecal(M))$.
    \end{enumerate}
\end{remark}
The \textbf{contact zeta function} of the Rumin complex if then defined as
\begin{align*}
    Z(s) := \frac{1}{2} \sum\limits_{k=0}^{2n+1} (-1)^k \omega(k) \zeta(\Delta_\Ecal\vert \Ecal^k(M))(s),
\end{align*}
where 
\begin{align*}
    \omega(k):=\begin{cases}
    k &\text{ for } k\leq n,\\
    k+1 &\text{ for } k\geq n+1.
\end{cases}
\end{align*}
For $k\leq n$, we have
\begin{align*}
    &\frac{1}{2}((-1)^{k+1}\omega(k) + (-1)^{2n+1-k+1}\omega (2n+1-k))\\
    =&\frac{1}{2}(-1)^k(-k+2n+1-k+1) = (-1)^k(n+1-k).
\end{align*}
Given the properties of the Hodge $\ast$ operator from Lemma~\ref{contactlabel7}, the zeta function can also be expressed as follows
\begin{align*}
    Z(s)=\sum\limits_{k=0}^n (-1)^k (n+1-k) \zeta(\Delta_\Ecal\vert \Ecal^k(M))(s).
\end{align*}
The \textbf{analytic (contact) torsion} of the Rumin complex is defined by
\begin{align*}
    T_\Ecal (M) := {-\frac{1}{2}Z'(0)}.
\end{align*}
\begin{remark}
    As opposed to \cite{rumin2012analytic} and the further literature on this topic, we define torsion without the exponential function. Rumin defined torsion as $T_\Ecal (M) := \exp\left({-\frac{1}{2}Z'(0)}\right)$.
\end{remark}
For further details and the foundational framework regarding the Mellin transform, the construction of heat kernels, and their asymptotic expansions for classical differential operators such as the Hodge Laplacian, the reader is referred to \cite{HeatKernelsBerlineGetzlerVergne} and \cite{GilkeyHeatEquationIndexTheorem}.
\subsection{Analytic Contact Torsion on CR Seifert Manifolds}
\label{contactsect7.2}
In this section, we will summarise some key results from \cite{rumin2026}, as they will significantly simplify our calculations later on. The main point is that, under certain conditions, some terms in the formula for the zeta function cancel out, so that we only need to consider spaces on which the Rumin Laplacian corresponds exactly to $-\Lcal_T^2$. The equations from Lemma~\ref{contactlabel8} played a decisive role in \cite{rumin2026}, which once again highlights why it makes sense to work with the rescaled Rumin complex. A similar approach, which is also based on this lemma, can be found in \cite{KitaokaHarmonicSasakian}, which we will briefly discuss at the end of this section.
\begin{definition}[{\cite[Definition~3.1]{rumin2026}}]
    A compact contact manifold $M$ with an integrable complex structure on $\Dcal$ with $\Lcal_T(J)=0$ is called \textbf{CR Seifert manifold} if it admits a circle action generated by the Reeb vector field $T$.
\end{definition}
Now, let $M$ be a CR Seifert manifold with globally defined contact form. For $t>0$, the \textbf{heat torsion function} is defined by
\begin{align*}
    \vartheta (t) := \sum\limits_{k=0}^n (-1)^k (n+1-k)\Tr(e^{-t\Delta_\Ecal}\vert \Ecal^k(M)).
\end{align*}
We extend $\Delta_\Ecal$ from $\bigoplus\limits_{k=0}^n \Ecal^k(M)=\bigoplus\limits_{k=0}^n \frA^k_{\prim} (\Dcal)$ to the whole of $\frA^\bullet (\Dcal)$ using the Lefschetz decomposition (\ref{contactlabel9}), by requiring that $\Delta_\Ecal L = L\Delta_\Ecal$.\\
According to \cite{rumin2026}, we know that $\Delta_\Ecal$ then also commutes with $d_\Dcal$ and $d_\Dcal^\ast$.\\

A closer look at the Lefschetz decomposition shows, by Lemma~\ref{contactlabel10}, that $\frA^\bullet (\Dcal)$ contains $n+1-k$ copies of $\frA^k_{\prim} (\Dcal)$. For $j\in\N_0$, we have $L^j(\frA^\mathrm{odd}_{\prim} (\Dcal))\subset \frA^\mathrm{odd} (\Dcal)$ and $L^j(\frA^\mathrm{even}_{\prim} (\Dcal))\subset \frA^\mathrm{even} (\Dcal)$. Thus, the heat torsion function $\vartheta (t)$ can be rewritten as
\begin{align*}
    \vartheta (t) = & \sum\limits_{k=0}^{2n} (-1)^k \Tr(e^{-t\Delta_\Ecal}\vert \frA^k(\Dcal))\\
    = & \Tr (e^{-t\Delta_\Ecal}\vert \frA^\mathrm{even}(\Dcal)) - \Tr (e^{-t\Delta_\Ecal}\vert \frA^\mathrm{odd}(\Dcal)).
\end{align*}
Now consider $D_\Dcal:=d_\Dcal+d_\Dcal^\ast$. We have
\begin{align*}
    D_\Dcal\colon \frA^\mathrm{even}(\Dcal)\rightarrow \frA^\mathrm{odd}(\Dcal),\\
    D_\Dcal\colon \frA^\mathrm{odd}(\Dcal)\rightarrow \frA^\mathrm{even}(\Dcal),
\end{align*}
since $d_\Dcal$ increases the degree by $1$ and $d_\Dcal^\ast$ decreases the degree by $1$. Therefore,
\begin{align*}
    \vartheta (t) = \Tr (e^{-t\Delta_\Ecal}\vert \kernel (D_\Dcal)\cap\frA^\mathrm{even}(\Dcal)) - \Tr (e^{-t\Delta_\Ecal}\vert \kernel (D_\Dcal)\cap\frA^\mathrm{odd}(\Dcal)),
\end{align*}
since $D_\Dcal$ commutes with $\Delta_\Ecal$.
According to \cite[Proposition~3.4]{rumin2026}, we can express the operator $D_\Dcal$ with the help of $U:=e^{i\pi (L+\Lambda)/4}\colon \frA^\bullet_\C(\Dcal)\rightarrow \frA^\bullet_\C (\Dcal)$ as
\begin{align*}
    U^{-1}D_\Dcal U = \sqrt{2} (\dolb_\Dcal + \dolb_\Dcal^\ast)
\end{align*}
on $\frA^\bullet_\C(\Dcal)$. Since $\dolb_\Dcal^2=0$ by Lemma~\ref{contactlabel18}, it follows that
\begin{align*}
    \kernel({D_\Dcal}_{\vert \frA^\bullet_\C(\Dcal)}) = U(\kernel(\dolb_\Dcal+\dolb_\Dcal^\ast)) = U (\kernel (\Delta_{\dolb_\Dcal}))
\end{align*}
for $\Delta_{\dolb_\Dcal} := \dolb_\Dcal\dolb_\Dcal^\ast + \dolb_\Dcal^\ast\dolb_\Dcal$.
\begin{lemma}[{\cite[Lemma~3.3]{rumin2026}}]
    We have $\Delta_\Ecal = -\Lcal_T^2$ on $\kernel (D_\Dcal)$, and since $U$ commutes with $\Lcal_T$, we even have
    \begin{align*}
        \vartheta(t) =& \Tr (e^{t\Lcal_T^2}\vert \kernel (\Delta_{\dolb_\Dcal})\cap\frA^\mathrm{even}_\C(\Dcal)) - \Tr (e^{t\Lcal_T^2}\vert \kernel (\Delta_{\dolb_\Dcal})\cap\frA^\mathrm{odd}_\C(\Dcal))\\
        =& \sum\limits_{k=0}^{2n} (-1)^k \Tr(e^{t\Lcal_T^2}\vert \kernel (\Delta_{\dolb_\Dcal})\cap\frA^k_\C(\Dcal)).
    \end{align*}
    Both traces converge since they are parts of the traces of $e^{-t\Delta_\Ecal}$, which converge.
\end{lemma}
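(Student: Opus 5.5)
The plan is to prove the lemma in three steps: first that $\Delta_\Ecal=-\Lcal_T^2$ on $\kernel(D_\Dcal)$, then a transport of this identity through $U$, and finally a grading argument that turns the even/odd traces into the alternating sum over $k$.

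\emph{Step 1: the operator identity.} Take $\alpha\in\kernel(D_\Dcal)$. Since $d_\Dcal$ and $d_\Dcal^\ast$ shift degree in opposite directions, $D_\Dcal\alpha=0$ splits degree by degree. We have
\begin{align*}
\lVert D_\Dcal\alpha\rVert^2=\lVert d_\Dcal\alpha\rVert^2+\lVert d_\Dcal^\ast\alpha\rVert^2+2\operatorname{Re}\langle d_\Dcal^2\alpha,\alpha\rangle ,
\end{align*}
and $d_\Dcal^2=-L\Lcal_T$ is a degree-$2$ operator, so the cross term pairs different degrees and vanishes for homogeneous $\alpha$. Hence $\kernel(D_\Dcal)$ is graded, and on it $d_\Dcal\alpha=0=d_\Dcal^\ast\alpha$.

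Next, decompose $\alpha$ by the Lefschetz decomposition (\ref{contactlabel9}). Because $[d_\Dcal,L]=0$ by (\ref{contactlabel16}), and $\Lambda$ satisfies the analogous adjoint relations, this reduces to pieces $L^j\beta$ with $\beta$ primitive. There I would compute $\Delta_\Ecal\beta$ directly from the definitions: $d_\Ecal$ is a rescaled $\proj_{\prim}\circ d_\Dcal$, and the middle operator is $D=\theta\wedge(\Lcal_T+d_\Dcal L^{-1}d_\Dcal)$. The ingredients are the identities $d_\Dcal^2=-L\Lcal_T$, the Kähler-type identities (\ref{contactlabel38}), and Lemma~\ref{contactlabel8}.(3),(4), which give $\sqrt{\Delta_\Ecal}=\Delta_{\dolb_\Ecal}+\Delta_{\partial_\Ecal}$ and $i\Lcal_T=\Delta_{\dolb_\Ecal}-\Delta_{\partial_\Ecal}$. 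The aim is to show that the $d_\Dcal$-closed, $d_\Dcal^\ast$-closed condition forces one of $\Delta_{\dolb_\Ecal}$, $\Delta_{\partial_\Ecal}$ to vanish on each bidegree piece. Then $\sqrt{\Delta_\Ecal}=\pm i\Lcal_T$, and squaring gives $\Delta_\Ecal=-\Lcal_T^2$.

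This is the main obstacle. The primitive components of an element of $\kernel(D_\Dcal)$ need not themselves be $d_\Dcal$-closed, because the projections mix Lefschetz components. I would first try to follow Rumin's argument in \cite{rumin2026}. Alternatively, I would conjugate by $U$, reducing to $\kernel(\Delta_{\dolb_\Dcal})$, where $\dolb_\Dcal$-harmonicity together with $[\partial_\Dcal,\dolb_\Dcal^\ast]=0$ gives more control.

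\emph{Step 2: transport through $U$.} I would show that $U$ commutes with $\Lcal_T$. Since $U=e^{i\pi(L+\Lambda)/4}$ and $[L,\Lcal_T]=0=[\Lambda,\Lcal_T]$, using $\Lcal_T^\ast=-\Lcal_T$ from Lemma~\ref{contactlabel8}.(6), this follows by power series. Hence $U$ restricts to an isomorphism from $\kernel(\Delta_{\dolb_\Dcal})$ onto $\kernel(D_\Dcal)\otimes\C$ that intertwines $e^{t\Lcal_T^2}$. Complexifying the real traces does not change them.

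\emph{Step 3: the parity and the sum over $k$.} The operator $L+\Lambda$ changes degree by $\pm2$, so $U$ preserves the even and odd parts. This gives the first displayed equality, with traces finite because each is a part of $\Tr(e^{-t\Delta_\Ecal})$. For the second equality, $\Delta_{\dolb_\Dcal}$ preserves bidegree, so its kernel is graded by $k$. It then only remains to regroup the even and odd traces degree by degree.
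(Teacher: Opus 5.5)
The gap is in your Step~1, which carries the entire content of the lemma. Your Steps~2 and~3 are exactly how the paper passes from the operator identity to the trace formula. $U$ commutes with $\Lcal_T$ because $[L,\Lcal_T]=0=[\Lambda,\Lcal_T]$, $U$ preserves parity, $\kernel(D_\Dcal)=U(\kernel(\Delta_{\dolb_\Dcal}))$, and $\kernel(\Delta_{\dolb_\Dcal})$ is bigraded, which justifies regrouping into $\sum_k(-1)^k$. The paper does not prove $\Delta_\Ecal=-\Lcal_T^2$ on $\kernel(D_\Dcal)$. It imports it from \cite[Lemma~3.3]{rumin2026}, together with the fact that the Lefschetz extension of $\Delta_\Ecal$ commutes with $d_\Dcal$ and $d_\Dcal^\ast$. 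Your Step~1 neither cites nor proves this: it ends with an ``aim'' and a pointer to Rumin, and the reduction before it is false.

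Concretely, $\kernel(D_\Dcal)$ is not graded by degree, and its elements need not satisfy $d_\Dcal\alpha=0=d_\Dcal^\ast\alpha$. Your norm identity establishes this only for \emph{homogeneous} $\alpha$. For $\alpha=\sum_k\alpha_k$ the cross term $-2\operatorname{Re}\sum_k\langle L\Lcal_T\alpha_k,\alpha_{k+2}\rangle$ need not vanish. The condition $D_\Dcal\alpha=0$ only says $d_\Dcal\alpha_k+d_\Dcal^\ast\alpha_{k+2}=0$; the de Rham argument $\image d\cap\image d^\ast=0$ fails because $d_\Dcal^2=-L\Lcal_T\neq 0$.

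For example, take $n=1$ and a non-constant CR-holomorphic function $v$, say $z_1$ on $S^3$. By (\ref{contactlabel38}), $d_\Dcal^\ast Lv=[d_\Dcal^\ast,L]v=i(\partial_\Dcal-\dolb_\Dcal)v=i\partial_\Dcal v$, and $d_\Dcal Lv\in\frA^3(\Dcal)=0$. Hence $D_\Dcal(v+iLv)=\partial_\Dcal v-\partial_\Dcal v=0$ (indeed $v+iLv=\sqrt2\,Uv$), while $D_\Dcal v=\partial_\Dcal v\neq 0$. So $\kernel(D_\Dcal)$ is only $\Z/2$-graded. This is why the paper, and your own Step~3, only ever use $\kernel(D_\Dcal)\cap\frA^{\mathrm{even}}(\Dcal)$ and $\kernel(D_\Dcal)\cap\frA^{\mathrm{odd}}(\Dcal)$.

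Beyond this, the mechanism you aim for is never connected to membership in $\kernel(D_\Dcal)$. On $\Ecal^k$ with $k\le n-1$, Lemma~\ref{contactlabel8} gives $\Delta_\Ecal+\Lcal_T^2=4\Delta_{\partial_\Ecal}\Delta_{\dolb_\Ecal}$. So each primitive Lefschetz component would have to lie in $\kernel(\Delta_{\partial_\Ecal})+\kernel(\Delta_{\dolb_\Ecal})$, and you give no reason why it does. The mechanism also does not cover primitive components of degree $n$. There Lemma~\ref{contactlabel8}.(3),(4) are unavailable, and $\Delta_\Ecal=D^\ast D+(d_\Ecal d_\Ecal^\ast)^2$ involves the second-order operator $D$.

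To close the gap, either cite \cite[Lemma~3.3]{rumin2026} as the paper does, or give a genuine argument. One option is to use that the extended $\Delta_\Ecal$ commutes with $U$ to reduce to the bigraded space $\kernel(\Delta_{\dolb_\Dcal})$. You would then need to control the Lefschetz components of its elements, including those of degree $n$.
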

\noindent Taking the limit $\lim\limits_{k\rightarrow +\infty} \vartheta(t)$ therefore also implies that
\begin{align}
    A :=& \sum\limits_{k=0}^{2n} (-1)^k(n+1-k)\dim_\R(H_\Ecal^k(M)) \notag\\
    =& \sum\limits_{k=0}^{2n} (-1)^k \dim_\C (\kernel(-\Lcal_T^2)\cap \kernel (\Delta_{\dolb_\Dcal})\cap\frA^k_\C(\Dcal)), \label{contactlabel12}
\end{align}
which is precisely the constant term in the zeta function $Z(s)$. In particular, this means that the dimensions on the right-hand side of the equation are all finite. In the following sections, we will be able to interpret these dimensions with the help of certain Dolbeault cohomologies.\\

By Mellin transform, we obtain
\begin{align}
    Z(s) =& \sum\limits_{k=0}^n (-1)^k(n+1-k)\zeta(\Delta_\Ecal\vert \Ecal^k(M))(s) \notag\\
    =& A + \frac{1}{\Gamma(s)}\int\limits_0^{+\infty}\sum\limits_{k=0}^n (-1)^k(n+1-k) \Tr^\ast(e^{-t\Delta_\Ecal}\vert \Ecal^k(M))t^{s-1}dt \notag\\
    =& A + \frac{1}{\Gamma(s)}\int\limits_0^{+\infty}\sum\limits_{k=0}^{2n} (-1)^k \Tr^\ast(e^{t\Lcal_T^2}\vert \kernel (\Delta_{\dolb_\Dcal})\cap\frA^k_\C(\Dcal)) t^{s-1} dt \notag\\
    =& \sum\limits_{k=0}^{2n} (-1)^k \zeta (-\Lcal_T^2\vert \kernel (\Delta_{\dolb_\Dcal})\cap\frA^k_\C(\Dcal))(s), \label{contactlabel35}
\end{align}
where 
\begin{align*}
    \zeta (-\Lcal_T^2\vert \kernel (\Delta_{\dolb_\Dcal})\cap\frA^k_\C(\Dcal))(s) := &\dim_\C (\kernel(-\Lcal_T^2)\cap \kernel (\Delta_{\dolb_\Dcal})\cap\frA^k_\C(\Dcal))\\
    &+ \sum\limits_{\lambda\in\operatorname{spec}^\ast (-\Lcal_T^2\vert \kernel (\Delta_{\dolb_\Dcal})\cap\frA^k_\C(\Dcal))} \frac{1}{\lambda^s}.
\end{align*}
In Section~\ref{contactsect8.2}, we can precisely determine the eigenvalues of $-\Lcal_T^2$ and interpret the corresponding eigenspaces as Dolbeault cohomologies of a complex manifold associated with the contact manifold.\\

In \cite{KitaokaHarmonicSasakian}, Kitaoka found a similar result to that of Rumin in \cite{rumin2026} for Sasakian manifolds, i.e.\ contact manifolds such that $J$ is integrable and $\Lcal_T(J) = 0$, by using the results we have summarised in Lemma~\ref{contactlabel8}. It is shown that certain terms in the zeta function cancel out. To achieve this, one considers the operators $\Delta_{\partial_\Ecal}$ and $\Delta_{\dolb_\Ecal}$ to obtain the following decomposition for $0\leq k\leq n-1$:
\begin{align*}
    \Ecal^k(M) = &(\kernel (\Delta_{\partial_\Ecal}) \cap \kernel (\Delta_{\dolb_\Ecal})) \oplus (\kernel (\Delta_{\partial_\Ecal}) \cap \image (\Delta_{\dolb_\Ecal}))\\
    &\oplus (\image (\Delta_{\partial_\Ecal}) \cap \kernel (\Delta_{\dolb_\Ecal})) \oplus (\image (\Delta_{\partial_\Ecal}) \cap \image (\Delta_{\dolb_\Ecal})).
\end{align*}
Since these two operators commute according to \cite[Proposition~4.2]{KitaokaHarmonicSasakian}, $\Ecal^k(M)$ can be further decomposed into the joint eigenspaces
\begin{align*}
    Q^k(\lambda_{1,0},\lambda_{0,1}) := \lbrace \alpha\in\Ecal^k(M)\mid \Delta_{\partial_\Ecal} \alpha = \lambda_{1,0}\alpha, \Delta_{\dolb_\Ecal} \alpha = \lambda_{0,1}\alpha\rbrace
\end{align*}
for $\lambda_{1,0},\lambda_{0,1}\geq 0$.\\
Together with the equations from Lemma~\ref{contactlabel8}
\begin{itemize}
    \item $i\Lcal_T = \Delta_{\dolb_\Ecal} - \Delta_{\partial_\Ecal}$ on $\Ecal^k(M)$ for $k\leq n-1$,
    \item $\sqrt{\Delta_\Ecal}:=d_\Ecal d_\Ecal^\ast + d_\Ecal^\ast d_\Ecal = \Delta_{\dolb_\Ecal} + \Delta_{\partial_\Ecal}$ on $\Ecal^k(M)$ for $k\leq n-1$,
\end{itemize}
this leads to a number of cancellations in the zeta function due to \cite[Proposition~4.3]{KitaokaHarmonicSasakian}
\begin{align*}
    & Q^k(\lambda_{1,0},\lambda_{0,1})\\
    =& Q^k(\lambda_{1,0},\lambda_{0,1})\cap \image(\partial_\Ecal^\ast)\cap\image(\dolb_\Ecal^\ast) \oplus Q^k(\lambda_{1,0},\lambda_{0,1})\cap \image(\partial_\Ecal)\cap\image(\dolb_\Ecal^\ast)\\
    & \oplus Q^k(\lambda_{1,0},\lambda_{0,1})\cap \image(\partial_\Ecal^\ast)\cap\image(\dolb_\Ecal) \oplus Q^k(\lambda_{1,0},\lambda_{0,1})\cap \image(\partial_\Ecal)\cap\image(\dolb_\Ecal)\\
    =& Q^k(\lambda_{1,0},\lambda_{0,1})_1 \oplus Q^k(\lambda_{1,0},\lambda_{0,1})_2 \oplus Q^k(\lambda_{1,0},\lambda_{0,1})_3 \oplus Q^k(\lambda_{1,0},\lambda_{0,1})_4\\
    \subset & \image (\Delta_{\partial_\Ecal}) \cap \image (\Delta_{\dolb_\Ecal})
\end{align*}
and \cite[Proposition~4.4]{KitaokaHarmonicSasakian}
\begin{equation*}
    \begin{tikzcd}
        Q^{k-1}(\lambda_{1,0},\lambda_{0,1})_1 \arrow[dr, "\cong" {near end},"\partial_\Ecal" swap]\arrow[drr, "\cong","\dolb_\Ecal" swap] & Q^{k-1}(\lambda_{1,0},\lambda_{0,1})_2 & Q^{k-1}(\lambda_{1,0},\lambda_{0,1})_3 & Q^{k-1}(\lambda_{1,0},\lambda_{0,1})_4 \\
        Q^k(\lambda_{1,0},\lambda_{0,1})_1 & Q^k(\lambda_{1,0},\lambda_{0,1})_2\arrow[drr, "\cong","\dolb_\Ecal" swap] & Q^k(\lambda_{1,0},\lambda_{0,1})_3\arrow[dr, "\partial_\Ecal" , "\cong" {swap, near start}] & Q^k(\lambda_{1,0},\lambda_{0,1})_4\\
        Q^{k+1}(\lambda_{1,0},\lambda_{0,1})_1 & Q^{k+1}(\lambda_{1,0},\lambda_{0,1})_2 & Q^{k+1}(\lambda_{1,0},\lambda_{0,1})_3 & Q^{k+1}(\lambda_{1,0},\lambda_{0,1})_4
    \end{tikzcd}
\end{equation*}
for $\lambda_{1,0},\lambda_{0,1} > 0$, because we also have
\begin{align*}
    (-1)^{k-1}(n+1-(k-1))+2\cdot (-1)^k(n+1-k)+(-1)^{k+1}(n+1-(k+1))=0
\end{align*}
and
\begin{align*}
    \Delta_\Ecal = (\Delta_{\partial_\Ecal}+\Delta_{\dolb_\Ecal})^2 = (\lambda_{1,0}+\lambda_{0,1})^2>0.
\end{align*}
For a better overview, not all isomorphisms are shown with arrows.
Ultimately, only spaces remain on which $\Delta_\Ecal = -\Lcal_T^2$. In degrees $k\in\lbrace 0,\ldots,n-1\rbrace$, these are precisely the spaces $\kernel (\Delta_{\partial_\Ecal}) \cap \image (\Delta_{\dolb_\Ecal})$ and $\image (\Delta_{\partial_\Ecal}) \cap \kernel (\Delta_{\dolb_\Ecal})$. In degree $n$, the space that remains is not so easy to interpret.\\
For symmetric contact manifolds with an action of $S^1$, as we shall consider later, one can precisely determine $-\Lcal_T^2$ on these remaining spaces by decomposing them with respect to the action of $S^1$, so that, at least in the degrees $k \in \lbrace 0,\ldots, n-1\rbrace$, the problem is reduced mainly to the harmonic forms $\kernel (\Delta_{\partial_\Ecal})$ and $\kernel (\Delta_{\dolb_\Ecal})$.\\
In \cite{GarfieldLeeRuminComplex} and \cite{CaseRuminComplex}, the harmonic forms $\kernel (\Delta_{\dolb_\Ecal})$ were studied in greater detail, and it was shown that these spaces are isomorphic to the Kohn-Rossi cohomology.
\section{Symmetric Contact Manifolds}
\label{contactsect8}
\subsection{Homogeneous Contact Manifolds}
\label{contactsect8.1}
In \cite{kitaokaspheres}, the analytic torsion of the Rumin complex was computed for the spheres $S^{2n+1}$, which are symmetric contact manifolds. We aim to determine the torsion for general compact (semisimple) symmetric contact manifolds with a globally defined contact form. To achieve this, we first review the classification of such spaces from \cite{semisimplecontactalekseevsky} and subsequently describe the relevant spaces and operators using representation theory.
\begin{definition}[\cite{contactmanifoldsboothby}]
\label{contactlabel19}
    Let $(M,\Dcal)$ be a contact manifold with a globally defined contact form $\theta\in\frA^1(M)$. It is called a \textbf{homogeneous contact manifold} if there exists a connected Lie group $G$ which acts transitively and effectively as a group of diffeomorphisms on $M$ that leave $\theta$ invariant, i.e.\ $g^\ast \theta = \theta$.\\
    For $x\in M$ and the isotropy group $H$ of $x$, i.e.\ the elements of $G$ that fix $x$, we have $M\cong G/H$.
\end{definition}
In the following Lemma, Boothby and Wang proved a relationship between possible contact forms of a homogeneous manifold $G/H$ and certain left-invariant $1$-forms on the Lie group $G$. Note that the left-invariant differential forms (respectively vector fields) are induced precisely by elements of the dual space $\frg^\ast$ of the Lie algebra $\frg$. In our notation, we do not distinguish between the elements of Lie algebras and the left-invariant vector fields or differential forms induced by them. Let $\frh$ be the Lie algebra of $H$.
\begin{lemma}[{\cite[Lemma~2]{contactmanifoldsboothby}}]
\label{contactlabel24}
    There is a one-to-one correspondence between the homogeneous contact forms on $G/H$ of dimension $2n+1$, i.e.\ contact forms that are invariant under $G$ as in Definition~\ref{contactlabel19}, and left-invariant forms $\theta'\in\frA^1(G)$ with
    \begin{enumerate}
        \item $\theta'$ is invariant under $\Ad^\ast_{\vert H}$, which is the dual of the adjoint representation on $\frg$,
        \item $\theta'(\frh) = 0$,
        \item $(d\theta')^{n+1}=0$ and $\theta'\wedge(d\theta')^n\neq 0$.
    \end{enumerate}
\end{lemma}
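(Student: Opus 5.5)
Write $\pi\colon G\to G/H$ for the projection. The plan is to compare forms on $G/H$ with forms on $G$, in two directions.

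\emph{From $\theta$ to $\theta'$.} Given a homogeneous contact form $\theta$ on $G/H$, set $\theta':=\pi^\ast\theta$.
\begin{enumerate}
\item Since $\pi$ intertwines left translation $L_g$ on $G$ with the action of $g$ on $G/H$, and $g^\ast\theta=\theta$, the form $\theta'$ is left-invariant. So $\theta'$ is determined by its value at $e$, an element of $\frg^\ast$.
\item Right translation by $h\in H$ satisfies $\pi\circ R_h=\pi$. Hence $R_h^\ast\theta'=\theta'$, which for a left-invariant form is exactly $\Ad^\ast_{\vert H}$-invariance.
\item $\theta'$ vanishes on $\frh=\ker d\pi_e$, and by left-invariance it vanishes on all vertical directions.
\item Pullback commutes with $d$ and $\wedge$, so $(d\theta')^{n+1}=\pi^\ast\big((d\theta)^{n+1}\big)$ and $\theta'\wedge(d\theta')^n=\pi^\ast\big(\theta\wedge(d\theta)^n\big)$.
\item The first vanishes because $(d\theta)^{n+1}$ is a $(2n+2)$-form on a manifold of dimension $2n+1$.
\item The second is nonzero because $\pi$ is a submersion, so $\pi^\ast$ is injective on forms.
\end{enumerate}

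\emph{From $\theta'$ to $\theta$.} Given $\theta'$ with properties (1)–(3), we show it descends.
\begin{enumerate}
\item The form $\theta'$ is horizontal: $\iota_Y\theta'=0$ for every $Y\in\frh$ by (2), and by left-invariance this holds on all vertical vectors.
\item Set $\eta:=\theta'\wedge(d\theta')^n$ and $\Omega:=(d\theta')^n$. Then $\iota_Y\eta=-\theta'\wedge\iota_Y\Omega$ for $Y\in\frh$.
\item Use $\iota_Y d\theta'=\Lcal_Y\theta'-d\iota_Y\theta'=\Lcal_Y\theta'$. Here $\Lcal_Y$ is along the left-invariant field $Y$, whose flow is right translation by $\exp(tY)\in H$. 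By the $\Ad^\ast_{\vert H}$-invariance (1), $\Lcal_Y\theta'=0$.
\item Consequently $\iota_Y d\theta'=0$, so $d\theta'$ is horizontal. Together with $\Lcal_Y\theta'=0$ and $\Lcal_Y d\theta'=0$, both $\theta'$ and $d\theta'$ are basic for the connected component of $H$.
\item For the full group $H$, use right-invariance under all of $H$, which is (1). Then $\theta'$ descends to a unique form $\theta$ on $G/H$ with $\pi^\ast\theta=\theta'$.
\item The form $\theta$ is $G$-invariant because $\theta'$ is left-invariant and $\pi$ is equivariant.
\item Finally, $\pi^\ast\big(\theta\wedge(d\theta)^n\big)=\theta'\wedge(d\theta')^n\neq0$, so $\theta\wedge(d\theta)^n\neq 0$ at every point by homogeneity. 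Thus $\theta$ is a contact form.
\end{enumerate}
These two constructions are mutually inverse because $\pi^\ast$ is injective.

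\emph{Where condition (3) is used, and the main difficulty.} The delicate point is the role of $(d\theta')^{n+1}=0$. In the descent argument, horizontality and invariance already force $\dim\frg-\dim\frh$ to be large enough to support the forms. The condition $(d\theta')^{n+1}=0$, together with the nonvanishing of $\theta'\wedge(d\theta')^n$, pins down that the kernel of $d\theta'$ restricted to $\ker\theta'$ is exactly $\frh$. This is what makes the quotient dimension equal to $2n+1$.

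I would prove this by linear algebra at $e$:
\begin{enumerate}
\item $\theta'\wedge(d\theta')^n\neq0$ means $d\theta'$ has rank at least $2n$ on $\ker\theta'_e$.
\item $(d\theta')^{n+1}=0$ means the rank is at most $2n$.
\item Hence the radical of $d\theta'$ on $\frg$ has dimension $\dim\frg-2n$. It contains $\frh$ (shown above), and it also contains a Reeb direction.
\item Identify this radical with $\frh\oplus\R T$. This requires knowing $\dim G/H=2n+1$, which is part of the hypothesis.
\end{enumerate}
I expect the main obstacle to be this bookkeeping: matching the radical of $d\theta'$ with $\frh\oplus\R T$ so that the correspondence is really a bijection onto contact forms of the stated dimension.
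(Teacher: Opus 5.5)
Your pullback-and-descent argument is correct and complete; it is the standard proof of Boothby--Wang's Lemma~2, which the paper only cites without giving a proof of its own. Your final section, however, is superfluous and partly misstated. Since $\dim G/H=2n+1$ is part of the hypothesis, $(d\theta')^{n+1}=\pi^\ast\big((d\theta)^{n+1}\big)=0$ holds automatically once $\theta'$ descends, and your descent never uses it. Conditions (1)--(3) by themselves could not force $\dim G/H=2n+1$ anyway; they only give $\dim G/H\geq 2n+1$. Finally, the radical count $\dim\{X\in\frg\mid d\theta'(X,\frg)=0\}=\dim\frh+1$ that you sketch is the content of Lemma~\ref{contactlabel20}, a consequence of the correspondence rather than an obstacle to it.
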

\begin{lemma}[{\cite[Lemma~4,5]{contactmanifoldsboothby}}]
\label{contactlabel20}
    Let $G/H$ be a homogeneous contact manifold with contact form $\theta$ and $\pi_H\colon G \rightarrow G/H$, $g \mapsto gH$. For $\theta' := \pi_H^\ast \theta$, let $K := \lbrace k \in G \mid \Ad_k^\ast \theta' = \theta'\rbrace$. We have $H\subset K$. The Lie algebra of $K$ is $\frk = \lbrace X\in\frg\mid d\theta'(X,\frg)=0\rbrace$ and $\dim(\frk)=\dim(\frh)+1$.\\
    There is an $X\in \frk$ such that $T_g\pi_H(X_g) = T_{gH}$, where $T$ is the Reeb vector field associated to $\theta$. We shall also denote $X$ by $T$.
\end{lemma}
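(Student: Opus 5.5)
We work on $G$ with the left-invariant form $\theta'=\pi_H^\ast\theta$ and reduce each claim of Lemma~\ref{contactlabel20} to linear algebra on $\frg$ together with the nondegeneracy of $d\theta$ on $\Dcal$.

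\emph{Step 1: $H\subset K$.} For $h\in H$, the map $R_h\colon g\mapsto gh$ satisfies $\pi_H\circ R_h=\pi_H$. Hence $R_h^\ast\theta'=\theta'$. Since $\theta'$ is also left-invariant, we get $\Ad_h^\ast\theta'=\theta'$ (equivalently, this is property~(1) of Lemma~\ref{contactlabel24}). Therefore $h\in K$.

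\emph{Step 2: the Lie algebra of $K$.} Differentiating $\Ad^\ast_{\exp(sX)}\theta'=\theta'$ at $s=0$ shows that $X\in\frk$ if and only if $\theta'([X,Y])=0$ for all $Y\in\frg$. For left-invariant forms, $d\theta'(X,Y)=-\theta'([X,Y])$. This gives $\frk=\lbrace X\in\frg\mid d\theta'(X,\frg)=0\rbrace$, the radical of the skew form $\omega:=d\theta'_e$ on $\frg$.

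\emph{Step 3: $\dim\frk=\dim\frh+1$.} Property~(2) of Lemma~\ref{contactlabel24} gives $\theta'(\frh)=0$, and Step~1 gives $\frh\subset\frk$. Because $\theta'$ is $\pi_H^\ast\theta$, the form $\omega$ descends to $\frg/\frh\cong T_{eH}(G/H)$, where it equals $d\theta_{eH}$. Indeed, if $X\in\frh$ then $T\pi_H(X)=0$, so $\iota_X\pi_H^\ast d\theta=0$. On the $(2n+1)$-dimensional space $\frg/\frh$, the form $d\theta$ has rank exactly $2n$, because $\theta\wedge(d\theta)^n\neq0$. Its radical is therefore one-dimensional, spanned by the Reeb vector $T_{eH}$. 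The radical of $\omega$ on $\frg$ is the preimage of that line under $\frg\to\frg/\frh$, so $\dim\frk=\dim\frh+1$.

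\emph{Step 4: the lift of $T$.} Pick $X\in\frk$ mapping to $T_{eH}$; Step~3 provides one. Normalise it so that $\theta'(X)=1$, which is possible since $\theta(T)=1$ and $\theta'(X)=\theta(T\pi_H X)$. Now compare the pushforward of the left-invariant field $X$ with $T$ at an arbitrary point $gH$. Using $\pi_H\circ L_g=\ell_g\circ\pi_H$ and the $G$-invariance of $\theta$, we get $T_g\pi_H(X_g)=T\ell_g(T_{eH})$. Since $\ell_g$ preserves $\theta$, it preserves the Reeb field, so this equals $T_{gH}$. The value is independent of the representative $g$: for $gh$ instead of $g$, we have $X_{gh}$ versus $TR_h(X_g)$, which differ by an element of $\frh$ because $\Ad_h X-X\in\frh$. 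This last fact follows from $\Ad_h$ preserving $\frk$ and $\theta'$, and from $\frk/\frh$ being one-dimensional and spanned by $X$ with $\theta'(X)=1$.

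The main obstacle is Step~4. One must check that the pushforward of the left-invariant field $X$ is well defined and is exactly $T$, not merely tangent to $T$ up to $\frh$. The normalisation $\theta'(X)=1$ together with $\Ad_h$-invariance of $\theta'$ resolves this. The rank count in Step~3 is routine.
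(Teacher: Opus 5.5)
Your proof is correct. The paper gives no argument of its own for this lemma and only cites Boothby--Wang; your route is the standard one: $\frk$ is the radical of $d\theta'_e=\pi_H^\ast d\theta_{eH}$, i.e.\ the preimage of the line $\R\cdot T_{eH}$ under $\frg\to\frg/\frh\cong T_{eH}(G/H)$.

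The representative-independence check at the end of Step~4 is unnecessary. The identity $\pi_H\circ L_g=\ell_g\circ\pi_H$, together with the $\ell_g$-invariance of the Reeb field, already gives $T_g\pi_H(X_g)=T_{gH}$ for each $g\in G$ separately. Likewise, $\theta'(X)=1$ holds automatically once $T_e\pi_H(X)=T_{eH}$, so no normalisation is needed.
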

In the addendum \cite{contactmanifoldsNURboothby} to \cite{contactmanifoldsboothby}, Boothby generalised some results from the previous paper, including the following theorem.
\begin{theorem}[{\cite[Theorem~2.6]{contactmanifoldsNURboothby}}]
\label{contactlabel22}
    Every compact homogeneous contact manifold $G/H$ is a circle bundle over a compact homogeneous Hodge(-K\"ahler) manifold $G/K$, where $K$ is the Lie group defined in Lemma~\ref{contactlabel20}. The fibres of this bundle are exactly given by the integral curves of the flow $\Phi^T_\cdot$ of the Reeb vector field $T$.\\
    If $\omega$ is the K\"ahler form of $G/K$, then $\pi\colon G/H\rightarrow G/K$ is a principal bundle with connection $\theta$ and $\pi^\ast\omega = d\theta$.
\end{theorem}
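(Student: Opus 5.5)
The plan is to build the fibration directly from the data of Lemma~\ref{contactlabel20}, verify the principal bundle structure using the $\Ad^\ast_K$-invariance of $\theta'=\pi_H^\ast\theta$, and only then address the K\"ahler structure on the base.

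\textbf{The subgroup $K$ and the fibres.} First, $K$ is the stabiliser of the vector $\theta'\in\frg^\ast$ under the continuous coadjoint representation, so $K$ is a closed subgroup of $G$. Hence $G/K$ is a manifold, and $G/H\to G/K$, $gH\mapsto gK$, is a smooth $G$-equivariant fibre bundle with fibre $K/H$. By Lemma~\ref{contactlabel20} this fibre is one-dimensional.

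Next I identify the Lie algebra $\frh$ inside $\frk$. Since $\theta'(\frh)=0$ and $\theta'(T)=1$ with $T\in\frk$, the dimension count $\dim\frk=\dim\frh+1$ gives
\begin{align*}
    \frh=\frk\cap\kernel(\theta').
\end{align*}
Now $\Ad_K$ preserves $\theta'$, hence also $d\theta'$ (which on left-invariant forms is $-\theta'([\cdot,\cdot])$). It therefore preserves $\frk=\kernel(d\theta')$, and with it $\frh$. So $\frh$ is an ideal of $\frk$, and $H_0$ is normal in $K_0$.

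For $g\in G$, the curve $t\mapsto g\exp(tT)H$ is an integral curve of the Reeb field. This holds because $T$ projects to the $G$-invariant Reeb field (Lemma~\ref{contactlabel20}) and left-invariant fields generate right translations. Consequently the fibres $gK/H$ are unions of Reeb orbits. Their identity components are exactly the Reeb orbits through points of the fibre.

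\textbf{Compactness and the principal $S^1$-structure.} The image of $G/H$ is $G/K$, so $G/K$ is compact. Each fibre $gK/H$ is closed in the compact space $G/H$, hence compact. Its components are therefore compact connected one-dimensional manifolds, i.e.\ circles, so the Reeb flow is periodic.

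If $K/H$ is disconnected, I replace $K$ by $K_0H$. This is still closed, has the same Lie algebra, and makes the fibre connected. Since $H_0$ is normal in $K_0$, the quotient $K_0/(K_0\cap H)$ is a compact connected one-dimensional Lie group, i.e.\ $S^1$. It acts on $G/H$ from the right by $gH\cdot kH=gkH$. This action commutes with the left $G$-action, is free, and has the fibres of $\pi$ as its orbits, so $\pi\colon G/H\to G/K$ is a principal $S^1$-bundle.

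\textbf{Connection and base form.} After normalising the period of $T$, the form $\theta$ is $S^1$-invariant, since $\Ad_K^\ast\theta'=\theta'$. It also satisfies $\theta(T)=1$, so it is a connection form. Its curvature $d\theta$ is horizontal, because $\iota_T d\theta=0$, and it is $S^1$-invariant. Hence $d\theta=\pi^\ast\omega$ for a unique closed $G$-invariant $2$-form $\omega$ on $G/K$.

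At the base point, $\omega$ is the form on $\frg/\frk$ induced by $d\theta'$, which is non-degenerate because $\frk=\kernel(d\theta')$. So $\omega$ is symplectic. As the curvature of a circle bundle, $\omega$ represents (a multiple of) the Chern class, which is an integral class.

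\textbf{Main obstacle: the K\"ahler structure.} The hardest step is producing a $G$-invariant complex structure compatible with $\omega$, which is what makes $G/K$ Hodge--K\"ahler. The map $gK\mapsto\Ad_g^\ast\theta'$ identifies $G/K$ with a coadjoint orbit. I would first reduce to the case of compact $G$, using Montgomery's theorem that a compact homogeneous space admits a transitive compact subgroup, and check that $\theta'$ and $K$ behave well under this reduction. For compact $G$, the coadjoint orbit is a generalised flag manifold $G/C(S)$. Such a manifold carries an invariant complex structure, obtained from a choice of positive roots adapted to $\theta'$, and the Kirillov--Kostant--Souriau form $\omega$ is positive $(1,1)$ with respect to it. Together with the integrality of $[\omega]$ established above, this gives the Hodge property.
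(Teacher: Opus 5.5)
The paper gives no proof of this theorem: it quotes it from Boothby, and it also takes from Boothby (p.~350 of the addendum) the reduction to compact, semisimple, simply connected $G$. Your first half is a sound reconstruction of the Boothby--Wang regularity argument: $K$ is closed, $\frh=\frk\cap\kernel(\theta')$ is an ideal of $\frk$, the fibres consist of Reeb circles, $\theta$ is a connection with $d\theta=\pi^\ast\omega$, and $\omega$ is symplectic with integral class up to scale. Two points there need repair. First, $gH\cdot kH=gkH$ is a well-defined action only if $K_0$ normalises all of $H$, and this does not follow from $H_0\trianglelefteq K_0$ alone. It is nevertheless true. Since $t\mapsto g\exp(tT)H$ is the Reeb orbit through $gH$ for every representative $g$, comparing the representatives $g$ and $gh$ shows that $\exp(tT)$ normalises $H$. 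Moreover $K_0$ is generated by $H_0$ and $\exp(\R T)$. Normality of $K_0\cap H$ in $K_0$ then follows because $K_0/H_0$ is abelian. Second, replacing $K$ by $K_0H$ changes the statement. The theorem concerns $G/H\to G/K$ for the $K$ of Lemma~\ref{contactlabel20} and asserts that each fibre is a single Reeb orbit, so you must prove that $K/H$ is connected rather than arrange it.

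The genuine gap is the K\"ahler step. The result you attribute to Montgomery is false as stated: a compact homogeneous space need not be homogeneous under any compact group. The Heisenberg nilmanifold is homogeneous under the Heisenberg group, but its fundamental group is not virtually abelian, as it would have to be if a compact connected group acted transitively. Montgomery's theorem needs the space to be simply connected, or at least to have finite fundamental group. At this stage you know this neither for $G/H$ nor for $G/K$, and it is essentially part of the conclusion that $G/K$ is a flag manifold. So the reduction to compact $G$, including your unspecified check that $\theta'$ and $K$ survive it, is exactly the missing content, and it is precisely what the paper imports from Boothby. You must either prove it, for instance by showing that the compact coadjoint orbit $G\cdot\theta'\cong G/K$ is an orbit of a compact semisimple group, or cite it.

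Once a compact connected transitive group is available, the rest of your plan works and also settles connectedness. Identify $\theta'$ with $\xi$ via an invariant inner product. Then $K=Z_G(\xi)$ is the centraliser of the torus $\overline{\exp(\R\xi)}$, hence connected, so the fibres are single circles. For a non-compact original $G$ this transfers, because the fibres of $G/H\to G/K$ are fibres of the contact moment map $gH\mapsto\Ad^\ast_g\theta'$, and these lie inside the corresponding fibres for the compact subgroup. Finally, the root spaces on which $-i\alpha(\xi)$ has a fixed sign give the invariant integrable complex structure for which $\omega$ is positive; in the paper's symmetric case this is $\ad_\xi$ on $\frp$.
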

According to \cite[p.~350]{contactmanifoldsNURboothby}, extensive conditions can be imposed on the Lie group $G$ without losing generality. Indeed, we can assume, without loss of generality, that $G$ is a compact, semisimple, simply connected Lie group.\\
Unlike in Lemma~\ref{contactlabel24}, we will not distinguish in our notation between the objects on $G/K$ and $G/H$ and the objects of the Lie algebras, since the structures on $G/K$ and $G/H$ are $G$-invariant in both cases.
\noindent We wish to consider symmetric contact manifolds, which are also homogeneous spaces associated with a semisimple Lie group $G$. These spaces were classified in \cite{semisimplecontactalekseevsky} using the results of \cite{BieliavskySymplectic}, but we must first clarify what symmetry means for contact manifolds.
\begin{definition}
    Let $(M,\Dcal)$ be a contact manifold.
    \begin{enumerate}
        \item A \textbf{contactomorphism} is a diffeomorphism $\varphi\colon M\rightarrow M$, such that $T\varphi(\Dcal)=\Dcal$.
        \item A contact manifold is said to be \textbf{symmetric} if, for every $x\in M$, there exists an involutive contactomorphism $\varphi$ such that $\varphi (x) = x$ and $T_x\varphi _{\vert \Dcal}=-\Id_\Dcal$. Such contactomorphism are called \textbf{symmetries}.
    \end{enumerate}
\end{definition}
\noindent In general, such symmetric contact manifolds are not homogeneous. Therefore, the classification in \cite{semisimplecontactalekseevsky} is restricted to those spaces $G/H$ which are also homogeneous and for which there exists a symmetry $\varphi$ at $eH\in G/H$ that normalises $G$, i.e.\ it must hold that $\varphi\circ g\circ \varphi^{-1}\in G$ for $g\in G$.\\
Since we are specifically interested in compact symmetric contact manifolds with a globally defined contact form, we restrict ourselves precisely to the compact spaces from the classification given in Table~5 in \cite{semisimplecontactalekseevsky}, see Table~\ref{contactlabel21}.
\begin{table}[h!]
\centering
\caption{Compact Hermitian CR symmetric contact spaces $G/H$}
\label{contactlabel21}
\begin{tabular}{|l|l|}
\hline
$\frg$ & $\frk = \frh \oplus \R$ \\
\hline
$\mathfrak{su}(n)$ & $\mathfrak{su}(p)\oplus \mathfrak{su}(n-p)\oplus \mathfrak{u}(1)$ \\
$\mathfrak{so}(2n)$ & $\mathfrak{su}(n)\oplus \mathfrak{so}(2)$ \\
$\mathfrak{so}(n)$ & $\mathfrak{so}(n-2)\oplus\mathfrak{so}(2)$ \\
$\mathfrak{sp}(n)$ & $\mathfrak{su}(n)\oplus \mathfrak{so}(2)$ \\
$\mathfrak{e}(6)$ & $\mathfrak{so}(10)\oplus \mathfrak{so}(2)$ \\
$\mathfrak{e}(7)$ & $\mathfrak{e}(6)\oplus \mathfrak{so}(2)$ \\
\hline
\end{tabular}
\end{table}
We are working with compact semisimple Lie groups, so the Killing form $B$ of the Lie group $G$ is negative definite according to \cite[Corollary 7.4.12]{kkdiffgeoENG}, which is why $-B$ is a canonical $\Ad_{\vert G}$-invariant scalar product on $\frg$.\\

According to \cite[Theorem~1.4]{semisimplecontactalekseevsky}, every such compact symmetric contact manifold is based on a simply connected compact symplectic symmetric base space $G/K$, as in Theorem~\ref{contactlabel22}. This base space $G/K$ can be represented as the adjoint orbit $\Ad_G(\xi)$ of an element $\xi\in\frg$, and $K=Z_G(\xi)$, which is connected. We have a symmetric decomposition $\frg = \frk \oplus \frp$, i.e.\ it arises from the eigenspaces of the symmetry at the point $eK$.\\
The symplectic form of $G/K$ is defined by
\begin{align*}
    \omega (X,Y) = d(B(\xi,\cdot )) (X,Y) = -B(\xi,[X,Y]) \text{ for } X,Y\in\frp.
\end{align*}
If the Lie group $G$ has several simple factors, then $G/K$ decomposes into a product $G/K=G_1/K_1\times\ldots\times G_m/K_m$, where $G_l$ are simple Lie groups and $K_l=K_l'\cdot Z(K_l)$ for a semisimple Lie group $K_l'$ and centre $Z(K_l)\cong S^1$ if $G/K$ is compact, and $\xi=\xi_1+\ldots + \xi_m$ with $0\neq \xi_l\in Z(\frk_l)$. For each factor, we have a decomposition $\frg_l=\frk_l\oplus \frp_l$ and $G_l/K_l$ is a simply connected symplectic manifold.\\
For each of these simple factors, the centre of $K_l$ is of real dimension $1$, and up to a scaling factor, one can uniquely choose $\xi_l$ such that $\ad_{\xi_l}\colon \frp_l\rightarrow \frp_l$ is a complex structure. This yields an integrable complex structure on $G_l/K_l$, which is Hermitian together with the scalar product $-B_{G_l}$. For $G := G_1 \times \ldots \times G_l$ and $K := K_1 \times \ldots \times K_l$, $G/K$ is precisely the decomposition above as a product, and everything (including the Killing form of $G$ and the complex structure induced by $\ad_{\xi} $ for $\xi:=\xi_1+\ldots + \xi_m$) is compatible with this factor structure. According to \cite{HelgasonDGLGSS}, $G/K$ is a K\"ahler manifold with the K\"ahler form being induced by $\omega (X,Y):=-B(X,JY)$ for $X,Y\in \frp$. In particular, the metric on $G/K$ is induced by $-B$. The metric on $G/K$ and the orientation provided by the complex structure yield, just as in Section~\ref{contactsect6.1}, an $L^2$-scalar product on $\Gamma(G/K,T(G/K))$ and $\frA^{\bullet,\bullet} (G/K,F)$ for any Hermitian vector bundle $F\rightarrow G/K$.\\

The above argument yields a symmetric contact manifold precisely if the orthogonal complement $\frh:=\kernel(B(\xi,\cdot))\cap\frk$ of $\xi$ in $\frk$ yields a Lie subgroup $H$ of $K$.\\
We have the following Lie bracket relations:
\begin{align}
    [\frk,\frk]\subset \frk,&& [\frk,\frp]\subset \frp, && [\frp,\frp]\subset \frk, \nonumber\\
    [\xi,\frk] = 0,&& [\xi,\frp]\subset \frp, && [\frh,\frh]\subset \frh. \label{contactlabel23}
\end{align}
For any pair of distinct factors $\frg_j$ and $\frg_l$ with $j\neq l$, we have $[\frg_k,\frg_l]=0$.\\
With respect to the Killing form $B$, the following holds:
\begin{align*}
    \R\cdot\xi \perp \frp, && \frh \perp \frp, && \R\cdot\xi \perp \frh, && \frk \perp \frp.
\end{align*}
According to \cite[Proposition~7.1]{semisimplecontactalekseevsky}, $\xi$ now generates $S^1$ as a Lie subgroup of $K$ that commutes with $H$, and furthermore, $K = (H\times S^1)/\Gamma$, where $\Gamma=H\cap S^1=\lbrace \exp_G(l\cdot \frac{2 \pi}{N}\xi)\mid 1\leq l\leq N\rbrace$ is a finite Lie subgroup of $S^1$, i.e.\ a group of unit roots for some $N\in\mathbb{N}$. The isomorphism is given by $H\times S^1\rightarrow K$, $(h,z)\mapsto hz$ with kernel $\lbrace (\gamma,\gamma^{-1})\mid \gamma\in H\cap S^1\rbrace$. The fact that one must also keep in mind at this point that the intersection of the two Lie groups might be non-trivial was overlooked in \cite[Proposition~7.1]{semisimplecontactalekseevsky}.\\
The contact structure $\Dcal$ is obtained by a $G$-invariant extension of $\frp$, and the contact form is induced by $\theta:=B(\xi,\cdot)$ as in Lemma~\ref{contactlabel24}. Just as on $G/K$, ${\ad_\xi}_{\vert \frp}$ induces a $G$-invariant integrable complex structure $J$ on $\Dcal$.\\
The flow of the Reeb vector field $T$ is given by $\Phi_t^T(gH)=g\exp_G(tT)H$, which is well\nobreakdash-defined in particular because $\exp(tT)\in S^1$ commutes with all elements of $H$. The action of $S^1$ on $G/H$ induced by the flow of $T$ thus corresponds to the action of $S^1$ from the right on $G/H$.
\begin{remark}
\label{contactlabel32}
    On $\frp$, we have ${\Ad_{\exp_G(\frac{\pi}{2}\xi)}}_{\vert \frp}=\cos (\frac{\pi}{2}) \Id+\sin (\frac{\pi}{2}) \ad_\xi=\ad_\xi$.\\
    In general,
    \begin{align}
        \Ad_{\exp_G(t\xi)}=\begin{pmatrix}
            \Id_\frk & 0 \\ 0 & \cos (t)\Id_\frp + \sin (t) \ad_\xi
        \end{pmatrix}, \label{contactlabel28}
    \end{align}
    and it is immediately clear that $\exp_G(t\xi) = 1 \Leftrightarrow t \in 2\pi$, i.e.\ $S^1=\R\cdot \xi/2\pi\mathbb{Z}\xi$. Furthermore, $\Ad_{\exp_G(t\xi)}$ has eigenvalues $e^{it}$ and $e^{-it}$ on $\frp\otimes\C$.
\end{remark}
\noindent Since $[\xi,\frk]=0$, we have ${\ad_\xi}_{\vert \frk}\equiv 0$. As $\ad_\xi$ operates blockwise, we obtain $-B(\xi,\xi)=-\Tr(\ad_\xi\circ\ad_\xi)=-\Tr (-\Id_{\vert \frp})=\dim\frp =:2n$.\\
The Reeb vector field corresponding to the contact form $\theta=B(\xi,\cdot)$ is therefore induced by $T:=-\frac{1}{\dim \frp}\xi=-\frac{1}{2n}\xi$.
\begin{remark}
    We have $-B\neq d\theta(\cdot,J\cdot)+\theta\otimes\theta$, because although
\begin{align*}
    d\theta (X,JY)=-B(\xi,[X,\ad_\xi Y])=-B([\xi,X],\ad_\xi Y)= -B(\ad_\xi X,\ad_\xi Y)=-B(X,Y),
\end{align*}
for $X,Y\in \frp$, we find that
\begin{align*}
    (\theta\otimes \theta)(\xi,\xi) = (-B(\xi,\xi))^2 && \Leftrightarrow && \theta\otimes\theta = -B(\xi,\xi)(-B)=-(\dim\frp)\cdot B = -2n\cdot B
\end{align*}
on $\R\cdot \xi$.
That is why $-B$ and $d\theta(\cdot,J\cdot)+\theta\otimes\theta$ differ on the subspace $\R\cdot\xi$. Still, this scaling does not affect the orthogonality.
\end{remark} 
\subsection{Representation-Theoretic Description of Operators and Differential Forms}
\label{contactsect8.2}
The study of differential forms and differential operators on a homogeneous space $G/K$ benefits from translating these objects into the language of representation theory. For a symmetric space $G/K$ with its associated decomposition $\frg = \frp \oplus \frk$, the space $\frA^k(G/K)$ can be identified with the space $C^\infty(G,\Lambda^k\frp^\ast)^K$ of $K$ equivariant functions on $G$. This identification simplifies the situation, since it reduces the analytic problems on the homogeneous space to more manageable computations with the help of Lie group and representation theory.\\
The decisive advantage of this lies in its compatibility with the natural differential operators like the de Rham operator. As demonstrated in \cite{ikedataniguchiLaplace}, the Hodge Laplacian $\Delta := dd^\ast +d^\ast d$ corresponds to the action of the Casimir operator of $G$ with respect to the isomorphism above. Since the action of the Casimir operator on irreducible representations of $G$ operators as scalar multiplication, the spectrum of the Hodge Laplacian can be determined much more easily.\\
This approach is particularly helpful in the context of Hermitian symmetric space since the Kodaira Laplacian $\Delta_{\dolb}:=\dolb\dolb^\ast + \dolb^\ast\dolb$ is related to the Hodge Laplacian $\Delta$, i.e.\ $2\Delta = \Delta_{\dolb}$, see \cite[Section~0.7]{griffiths1994principles}.\\
As shown in \cite{kkhermsymm} and \cite{kkReidemeisterTorsion}, the relation to the Casimir operator of $G$ allows for the purely representation-theoretic computation of the holomorphic torsion or the analytic torsion. The computation reduces to the algebraic determination of the Casimir eigenvalues and their multiplicities within the relevant representations.\\

In the following section, we will repeat the identification of the differential forms from \cite{ikedataniguchiLaplace} and use it to translate the spaces and operators relevant to our situation in terms of representation theory.

Let $G/H$ be a compact symmetric contact manifold with a corresponding base manifold $G/K$, as in the previous section.
Since we work with homogeneous spaces for which the Lie brackets satisfy the relations~(\ref{contactlabel23}), we can now proceed in exactly the same way as in \cite{ikedataniguchiLaplace} by using the decomposition of the Lie algebra $\frg = \frp \oplus (\R\cdot \xi)\oplus \frh$. Due to the $G$-invariance of the contact structure, complex structure and metric, we can describe the relevant spaces and operators much more simply using representations.\\
For a finite-dimensional complex/real representation $\rho\colon H\rightarrow \Aut(V)$ of $H$ (respectively $K$), we define
\begin{align*}
    C^\infty(G,V)^H:=\lbrace f\in C^\infty(G,V)\mid f(gh)=\rho(h^{-1})f(g)\text{ for all }h\in H\rbrace .
\end{align*}
The following map is an isomorphism
\begin{align*}
    \psi\colon \frA^k(G/H)\rightarrow C^\infty (G,\Lambda^k (\frg/\frh)^\ast)^H\cong C^\infty (G,\Lambda^k (\frp^\ast\oplus \R\cdot\theta))^H,
\end{align*}
which is defined by
\begin{align*}
    \psi (\alpha)_{\vert g}(Y_1,\ldots,Y_k):=&(\pi_H^\ast \alpha)_{\vert g}(T_eL_g(Y_1),\ldots,T_eL_g(Y_k))\\
    =&((\pi_H\circ L_g)^\ast\alpha)_{\vert e}(Y_1,\ldots,Y_k)
\end{align*}
for $\alpha\in\frA^k(G/H)$, $Y_1,\ldots,Y_k\in \frg$ and the projection $\pi_H\colon G\rightarrow G/H$. If one also considers an associated vector bundle $F := G \times_H V$ associated to a representation $V$ of $H$, then this isomorphism can be extended to an isomorphism $\frA^k(G/H, F) \rightarrow C^\infty(G, \Lambda^k(\frp^\ast \oplus \R \cdot \theta) \otimes V)^H$.
\begin{remark}
    Now $G$ acts on both spaces from the left, and the isomorphism above is equivariant with respect to these respective actions.
    \begin{itemize}
        \item On $\frA^k (G/H)$, $G$ acts by $g\cdot\alpha:=(L_{g^{-1}})^\ast \alpha$.
        \item On $C^\infty (G,\Lambda^k (\frg/\frh)^\ast)^H$, $G$ acts by $(g\cdot \alpha)_{\vert x}:=\alpha_{\vert g^{-1}x}$.
    \end{itemize}
    The space $C^\infty (G,V)^H$ is called the \textbf{induced representation of $G$} for a representation $V$ of $H$, see also \cite[Section~III.6]{broetomdieck}.
\end{remark}
\noindent This identification is very helpful when considering the matter further, as it allows us to describe maps and spaces using representation theory.\\
The contact form is $G$-invariant, thus
\begin{align*}
    \psi\colon \frA^k(\Dcal) &\rightarrow C^\infty (G,\Lambda^k \frp^\ast)^H,\\
    \psi\colon \theta\wedge\frA^k(\Dcal) &\rightarrow C^\infty (G,\R\cdot\theta\wedge\Lambda^k \frp^\ast)^H.
\end{align*}
$J=\ad_\xi$ is a complex structure on $\frp$, which, via $G$-invariant extensions, yields the complex structures on $\Dcal$ and $G/K$. We define
\begin{align*}
    \frp^{1,0}:=\kernel ((J-i\cdot\Id)_{\vert \frp\otimes_\R\C}),\\
    \frp^{0,1}:=\kernel ((J+i\cdot\Id)_{\vert \frp\otimes_\R\C}).
\end{align*}
Since the complex structure is also $G$-invariant, we can extend the isomorphism above to the complexified case as well:
\begin{align}
    \psi\colon \frA^{p,q}(\Dcal)\rightarrow C^\infty (G,\Lambda^{p,q} \frp^\ast)^H.\label{contactlabel4}
\end{align}
For $G/K$, we have, with the projection $\pi_K\colon G\rightarrow G/K$, a completely analogous isomorphism
\begin{align*}
    \psi\colon \frA^k(G/K,F)\rightarrow C^\infty (G,\Lambda^k (\frg/\frk)^\ast\otimes V)^H\cong C^\infty (G,\Lambda^k \frp^\ast\otimes V)^K,
\end{align*}
which we also denote by $\psi$, for a representation $V$ of $K$ and $F=G\times_K V$, due to $\pi_K^\ast F\cong G\times V$.
For a complex representation $V$, we have the isomorphism
\begin{align*}
    \psi\colon \frA^{p,q}(G/K,F)\rightarrow C^\infty (G,\Lambda^{p,q} \frp^\ast\otimes V)^K.
\end{align*}
\noindent Let $\operatorname{dvol}_G$ be the left-invariant volume form on $G$ induced by $-B$. This yields a (Hermitian) scalar product on the induced $G$-representations together with the respective scalar product $\metric{\cdot}{\cdot}$ of the representation $W$ of $H$ (respectively $K$), by defining 
\begin{align}
    \metric{\alpha}{\beta}':=\int\limits_G \metric{\alpha_{\vert g}}{\beta_{\vert g}} \operatorname{dvol}_G \label{contactlabel33}
\end{align}
for $\alpha,\beta\in C^\infty (G,W)^H$ (respectively $\alpha,\beta\in C^\infty (G,W)^K$).\\
If $V$ is a complex representation with $K$-invariant Hermitian scalar product, then this yields a $G$-invariant Hermitian metric on $F=G\times_K V$. For the corresponding $L^2$-metric, we have
\begin{align}
    \metric{\psi(\alpha)}{\psi(\beta)}'=c\cdot \metric{\alpha}{\beta}_{L^2} \label{contactlabel25}
\end{align}
for $\alpha,\beta\in\frA^{p,q} (G/K,F)$ and some constant $c>0$, see \cite[Equations~(2),~(3)]{ikedataniguchiLaplace}.
In this case, the scalar product on $\Lambda^{p,q}\frp^\ast$ is induced by $d\theta(\cdot,J\cdot)=-B$. Exactly the same result holds for $\frA^{p,q}(\Dcal)$ and the corresponding induced $G$-representation.

For 
\begin{align*}
    &W_1,W_2\subset \frA^{\bullet}_\C(G/H):=\Gamma(G/H,\Lambda^\bullet (T^\ast G/H)\otimes \C),\\
    (\text{respectively }&W_1,W_2\subset \frA^\bullet_\C (G/K,F):=\Gamma(G/H,\Lambda^\bullet ((T^\ast G/K)\otimes \C)\otimes F)),
\end{align*}
and $\varphi\colon W_1\rightarrow W_2$, we denote $\widetilde{\varphi} := \psi \circ \varphi \circ \psi^{-1}$.\\
Now, $\widetilde{\iota_T} := \psi\circ \iota_T\circ\psi^{-1} = \iota_T$, where the last $\iota_T$ is the linear map $\Lambda^k (\frp^\ast\oplus \R\cdot \xi)\rightarrow \Lambda^{k-1} \frp^\ast$. Thus, for $\alpha\in C^\infty(G,\Lambda^k\frp^\ast)^H$, we have $\widetilde{\Lcal_T}\alpha = \iota_T\circ \widetilde{d}\alpha$. In general, $G$-invariant maps on the differential forms of $G/H$ or $G/K$ translate to the corresponding linear maps on the representations that induced these $G$-invariant maps in the first place, and we do not distinguish between them in our notation.\\
According to \cite[Theorem~2.3.8]{kkdiffgeoENG}, for a manifold $M$ and $Y_0,\ldots,Y_k\in\Gamma(M,TM)$, $\alpha\in\frA^k(M)$, we have
\begin{align*}
    (d\alpha)(Y_0,\ldots,Y_k) =& \sum\limits_{j=0}^k (-1)^j \Lcal_{Y_j} (\alpha(Y_0,\ldots,\widehat{Y}_j,\ldots,Y_k))\\
    & + \sum\limits_{0\leq j< l\leq k} (-1)^{j+l} \alpha([Y_j,Y_l],Y_0,\ldots,\widehat{Y}_j,\ldots,\widehat{Y}_l,\ldots,Y_k),
\end{align*}
where $\widehat{Y}$ indicates that $Y$ is omitted. Following \cite{ikedataniguchiLaplace}, for $\alpha\in\frA^k(G/H)$ (respectively $\alpha\in\frA^k(G/K)$) and $Y_0,\ldots,Y_k\in\frg$ we have
\begin{align*}
    &\psi(d\alpha)_{\vert g} (Y_0,\ldots,Y_k) \\
    =& (\pi^\ast (d\alpha))_{\vert g} ({Y_0}_{\vert g},\ldots,{Y_k}_{\vert g})\\
    =& ( d(\pi^\ast\alpha))_{\vert g} ({Y_0}_{\vert g},\ldots,{Y_k}_{\vert g})\\
    =& \sum\limits_{j=0}^k (-1)^j \Lcal_{Y_j} ((\pi^\ast\alpha)(Y_0,\ldots,\widehat{Y}_j,\ldots,Y_k))_{\vert g}\\
    & + \sum\limits_{0\leq j< l\leq k} (-1)^{j+l} (\pi^\ast\alpha)([Y_j,Y_l],Y_0,\ldots,\widehat{Y}_j,\ldots,\widehat{Y}_l,\ldots,Y_k)_{\vert g},
\end{align*}
where $\Lcal_{Y_j}$ denotes the Lie derivative along the left-invariant vector field ${Y_j}_{\vert g}:=T_eL_g(Y_j)$ on $G$. Thus, for $\alpha\in C^\infty (G,\Lambda^k (\frp^\ast\oplus \R\cdot\theta))^H$ (respectively $\alpha\in C^\infty (G,\Lambda^k \frp^\ast)^K$) and $Y_0,\ldots,Y_k\in\frg$, this leads to
\begin{align}
    (\widetilde{d}\alpha)_{\vert g}(Y_0,\ldots,Y_k) =& \sum\limits_{j=0}^k (-1)^j \Lcal_{Y_j} (\alpha(Y_0,\ldots,\widehat{Y}_j,\ldots,Y_k))_{\vert g}\notag\\
    & + \sum\limits_{0\leq j< l\leq k} (-1)^{j+l} \alpha([Y_j,Y_l],Y_0,\ldots,\widehat{Y}_j,\ldots,\widehat{Y}_l,\ldots,Y_k)_{\vert g},\label{contactlabel26}
\end{align}
where $[Y_j,Y_l]$ is the Lie bracket of the Lie algebra $\frg$.\\
In our case, $[\frp,\frp]\subset \frk$ and $[\xi,Y]=J(Y)\in\frp$ for $Y\in\frp$, which is why the second sum with the Lie brackets either disappears or is greatly simplified. The vanishing of the Lie brackets was essential in \cite{ikedataniguchiLaplace} in order to be able to express the Hodge Laplacian operator as the Casimir operator.
\begin{remark}
    For a left-invariant vector field $X$ on $G$ and $\alpha\in C^\infty (G,V)$ with a vector space $V$, one has
        \begin{align}
            \Lcal_X(\alpha)_{\vert g}=T_g\alpha(T_eL_g(X))=T_g\alpha(T_0\exp_G(\cdot X)(\frac{\partial}{\partial t}))=\frac{\partial}{\partial t}_{\vert t=0} \alpha_{\vert g\exp_G(tX)}.\label{contactlabel30}
        \end{align}
\end{remark}
Working with induced $G$-representations offers computational advantages, which is why we will use them as our primary framework in what follows. However, for the sake of clarity, we will refrain from introducing new symbols for the corresponding operators with respect to this identification if the operator has already been induced by a map on the underlying representation.
If, in individual cases, the context is not sufficient to make a clear distinction, we will explicitly point this out.
\begin{lemma}
    We have $\Lcal_T\circ J = J\circ \Lcal_T$, which implies $\Lcal_T(J)=0$.
\end{lemma}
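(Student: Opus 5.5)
We work throughout in the induced-representation picture via the isomorphism $\psi$ from (\ref{contactlabel4}). There $J$ is the pointwise linear map on $\Lambda^{p,q}\frp^\ast$ (or on $\Lambda^k\frp^\ast$) induced by $\ad_\xi$ on $\frp$, namely $(J\alpha)(Y_1,\ldots,Y_k)=\alpha(JY_1,\ldots,JY_k)$. We need an explicit formula for $\widetilde{\Lcal_T}$ on $C^\infty(G,\Lambda^k\frp^\ast)^H$.

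\textbf{Step 1: formula for $\widetilde{\Lcal_T}$.} By the remark above, $\widetilde{\Lcal_T}\alpha=\iota_T\widetilde d\alpha$. Evaluate (\ref{contactlabel26}) with $Y_0=T=-\frac{1}{2n}\xi$ and $Y_1,\ldots,Y_k\in\frp$. The Lie-derivative terms with $j\geq 1$ contain $\alpha(T,\ldots)=0$, because $\alpha$ takes values in $\Lambda^k\frp^\ast$ and $T\perp\frp$. Bracket terms with $j,l\geq1$ involve $[Y_j,Y_l]\in\frk$, on which $\alpha$ vanishes since $\frk\perp\frp$. The only surviving bracket terms are those with $j=0$, which involve $[T,Y_l]=-\frac{1}{2n}JY_l\in\frp$. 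This gives
\begin{align*}
(\widetilde{\Lcal_T}\alpha)(Y_1,\ldots,Y_k)=\Lcal_T\bigl(\alpha(Y_1,\ldots,Y_k)\bigr)+\frac{1}{2n}\sum_{l=1}^k\alpha(Y_1,\ldots,JY_l,\ldots,Y_k),
\end{align*}
after sorting the signs $(-1)^{l}$ from the reordering. In other words, $\widetilde{\Lcal_T}=R_T+\frac{1}{2n}J_\ast$, where:
\begin{itemize}
\item $R_T$ is differentiation along the left-invariant field $T$, acting componentwise on the values as in (\ref{contactlabel30});
\item $J_\ast$ is the derivation extension of $\ad_\xi$ (up to sign) to $\Lambda^k\frp^\ast$.
\end{itemize}
This is equivalent to the representation-theoretic statement that $\widetilde{\Lcal_T}\alpha=\frac{d}{dt}\big|_{t=0}\rho(\exp(tT))\alpha_{g\exp(tT)}$, since right translation by $\exp(tT)$ commutes with $H$.

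\textbf{Step 2: commutation.} $J$ is a constant linear map on the fibre $\Lambda^k\frp^\ast$, so it commutes with $R_T$, which only differentiates in $g$. It also commutes with $J_\ast$. The reason is that $\Ad_{\exp(t\xi)}$ on $\frp$ commutes with $\ad_\xi$ by (\ref{contactlabel28}), hence the induced one-parameter group on $\Lambda^k\frp^\ast$ commutes with the induced map $J$; differentiating at $t=0$ gives $[J_\ast,J]=0$. Therefore $\widetilde{\Lcal_T}\circ J=J\circ\widetilde{\Lcal_T}$, and transporting back via $\psi$ gives $\Lcal_T\circ J=J\circ\Lcal_T$ on $\frA^\bullet(\Dcal)$.

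\textbf{Step 3: from commutation to $\Lcal_T(J)=0$.} Apply the commutation to $1$-forms. For $\beta\in\frA^1(\Dcal)$ and $Y\in\Gamma(\Dcal)$ we have $\Lcal_T(\beta\circ J)=(\Lcal_T\beta)\circ J+\beta\circ\Lcal_T(J)$. Combined with the commutation, this gives $\beta(\Lcal_T(J)Y)=0$ for all $\beta$, so $\Lcal_T(J)=0$ as an endomorphism of $\Dcal$. Here $\Lcal_T(J)$ preserves $\Dcal$ because $\Lcal_T$ preserves $\Dcal=\kernel\theta$ (recall $\Lcal_T\theta=0$). As a cross-check, $J$ is $G$-invariant and induced by $\ad_\xi$, while the Reeb flow is right multiplication by $\exp(tT)\in S^1$, whose differential on $\frp$ is $\Ad_{\exp(tT)}$; this commutes with $\ad_\xi$, giving directly that the flow preserves $J$.

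\textbf{Main obstacle.} The delicate point is Step 1: the sign bookkeeping in (\ref{contactlabel26}) and checking that all other terms vanish. In particular one must use that $\alpha$ annihilates both $\frk$ (so $\frh$-valued brackets drop out) and $T$. The precise sign in front of $J_\ast$ is irrelevant for the commutation, so only the structure of $\widetilde{\Lcal_T}$ as "derivative plus derivation extension of $\ad_\xi$" is needed.
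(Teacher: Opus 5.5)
Your proof is correct and is essentially the paper's argument. Both use Cartan's formula and (\ref{contactlabel26}) with $Y_0=T$, and both reduce to the facts that $J$ is a constant fibrewise map and that $J=\ad_\xi$ commutes with $\ad_T$ on $\frp$; you merely isolate $\widetilde{\Lcal_T}=R_T+\frac{1}{2n}J_\ast$ first (the content of Lemma~\ref{contactlabel31}), whereas the paper inserts $J\alpha$ directly and uses $J[T,Y]=[T,JY]$. Your Step~3 usefully spells out the implication to $\Lcal_T(J)=0$ that the paper calls immediate, and the only piece you omit, the component $\theta\wedge\frA^\bullet(\Dcal)$, is trivial because $J$ acts by zero there and $\Lcal_T$ preserves it.
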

\begin{proof}
    Since $J(T)=0$ and $\Lcal_T$ preserves the decomposition~(\ref{contactlabel1}), this is clearly the case for $\theta\wedge\frA^k(\Dcal)$. Let $\alpha\in C^\infty(G,\Lambda^k\frp^\ast)^H\cong \frA^k(\Dcal)\subset \kernel(\iota_T)$. 
    Due to the Cartan homotopy formula $\Lcal_T=d\circ \iota_T + \iota_T\circ d$ \cite[Theorem~2.3.7]{kkdiffgeoENG} and (\ref{contactlabel26}), for $Y_1,\ldots,Y_k\in\frp$, we obtain
    \begin{align*}
        &(\widetilde{\Lcal_T}\circ \widetilde{J}(\alpha))(Y_1,\ldots,Y_k) = \widetilde{d}(J(\alpha))(T,Y_1,\ldots,Y_k)\\
        =& \Lcal_T(\beta(J(Y_1),\ldots,J(Y_k))) + \sum\limits_{1\leq j\leq k}(-1)^j\alpha (J([T,Y_j]),J(Y_1),\ldots,\widehat{J(Y_j)},\ldots,J(Y_k))\\
        =& \Lcal_T(\beta(J(Y_1),\ldots,J(Y_k))) + \sum\limits_{1\leq j\leq k}(-1)^j\alpha ([T,J(Y_j)],J(Y_1),\ldots,\widehat{J(Y_j)},\ldots,J(Y_k))\\
        =& (\widetilde{d}\alpha)(T,J(Y_1),\ldots,J(Y_k)) = (\widetilde{J}\circ \widetilde{\Lcal_T}(\alpha))((Y_1,\ldots,Y_k)),
    \end{align*}
    since all other terms in (\ref{contactlabel26}) vanish when $T$ is inserted in $\alpha$ and
    \begin{align*}
        J([T,Y_j]) = \ad_\xi([T,Y_j])=[\underbrace{\ad_\xi(T)}_{=0},Y_j]+[T,\ad_\xi(Y_j)] = [T,J(Y_j)].
    \end{align*}
    From $\Lcal_T\circ J = J\circ \Lcal_T$, it follows immediately that $\Lcal_T(J)=0$.
\end{proof}
\noindent In particular, this shows that the compact symmetric contact manifolds that we are considering are actually CR Seifert manifolds, which is why we can apply the results from Section~\ref{contactsect7.2} in the following.

To determine the eigenvalues of $\Lcal_T$ on $\frA^{\bullet,\bullet}(\Dcal)$, we recall that $T$ generates the action of $S^1$ on $G/H$, therefore we wish to decompose the induced $G$-representations $C^\infty(G,\Lambda^{p,q}\frp^\ast)^H$ with respect to the action of $S^1$.

According to \cite[Section~III.6]{broetomdieck}, we have a decomposition of $C^\infty(G,\Lambda^{p,q}\frp^\ast)^H$ into irreducible representations
\begin{align*}
    C^\infty(G,\Lambda^{p,q}\frp^\ast)^H = \widehat{\bigoplus_\lambda} V_\lambda \otimes \Hom_H(V_\lambda,\Lambda^{p,q}\frp^\ast),
\end{align*}
where $\rho_\lambda\colon G\rightarrow \Aut(V_\lambda)$ is an irreducible complex representation of $G$.\\
The subspace $V_\lambda \otimes \Hom_H(V_\lambda,\Lambda^{p,q}\frp^\ast)$ is embedded by $v\otimes \varphi\mapsto (g\mapsto \varphi(\rho_\lambda(g^{-1})v))$.\\
The irreducible representations of $S^1$ are given by the characters $\exp_G(t\xi)\mapsto e^{m\cdot it}$ for $m\in\Z$, according to \cite[Proposition~II.8.1]{broetomdieck}. By decomposing each $V_\lambda$ into irreducible representations of $S^1\subset G$, one obtains the decomposition
\begin{align}
    C^\infty(G,\Lambda^{p,q}\frp^\ast)^H = \widehat{\bigoplus\limits_{m\in\Z}} C^\infty(G,\Lambda^{p,q}\frp^\ast)^H_m, \label{contactlabel34}
\end{align}
where
\begin{align*}
    C^\infty(G,\Lambda^{p,q}\frp^\ast)^H_m := \left\{ \alpha\in C^\infty(G,\Lambda^{p,q}\frp^\ast)^H \mid \forall t\in\R,g\in G:\alpha_{\vert g\exp_G(t\xi)} = e^{-m\cdot it}\alpha_{\vert g} \right\}.
\end{align*}

We describe the action of $S^1$ on $\Lambda^{p,q}\frp^\ast$. Due to (\ref{contactlabel28}), we immediately obtain
\begin{align}
    \exp_G(t\xi)\cdot \alpha = e^{(q-p)\cdot it}\alpha \text{ for } \alpha \in \Lambda^{p,q}\frp^\ast. \label{contactlabel29}
\end{align}
However, given that $K=(H\times S^1)/\Gamma$ with $\Gamma := H\cap S^1 = \lbrace \exp_G(l\cdot \frac{2 \pi}{N}\xi)\mid 1\leq l\leq N\rbrace$, there is a restriction on the possible values of $m\in\Z$:
\begin{lemma}
    For $m\in \Z$ and $p,q\geq 0$ with $m+p-q\not\in N\cdot \Z$, we have
    $C^\infty(G,\Lambda^{p,q}\frp^\ast)^H_m = 0$.
\end{lemma}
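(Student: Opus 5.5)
The proof compares two ways in which an element $\gamma\in\Gamma=H\cap S^1$ acts on a section $\alpha\in C^\infty(G,\Lambda^{p,q}\frp^\ast)^H_m$. Since $\gamma$ lies both in $H$ and in $S^1$, we get two formulas for $\alpha_{\vert g\gamma}$: one from $H$-equivariance and one from the $S^1$-weight condition. They can only agree if $\alpha$ vanishes or the exponents match modulo $N$.

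Fix $g\in G$ and take the generator $\gamma:=\exp_G(\frac{2\pi}{N}\xi)$ of $\Gamma$.

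\emph{The $S^1$-weight condition.} Applying the defining condition of $C^\infty(G,\Lambda^{p,q}\frp^\ast)^H_m$ with $t=\frac{2\pi}{N}$ gives
\begin{align*}
    \alpha_{\vert g\gamma}=e^{-m\cdot i\frac{2\pi}{N}}\alpha_{\vert g}.
\end{align*}

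\emph{$H$-equivariance.} Since $\gamma\in H$, we have $\alpha_{\vert g\gamma}=\rho(\gamma^{-1})\alpha_{\vert g}$. Here $\rho$ is the representation of $H$ on $\Lambda^{p,q}\frp^\ast$, and on $\gamma$ it is the restriction of the $K$-action, i.e.\ of the $S^1$-action. By (\ref{contactlabel29}), $\gamma^{-1}=\exp_G(-\frac{2\pi}{N}\xi)$ acts on $\Lambda^{p,q}\frp^\ast$ by $e^{-(q-p)i\frac{2\pi}{N}}$. Hence
\begin{align*}
    \alpha_{\vert g\gamma}=e^{-(q-p)i\frac{2\pi}{N}}\alpha_{\vert g}.
\end{align*}

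\emph{Comparison.} Equating the two expressions yields
\begin{align*}
    \left(e^{-m i\frac{2\pi}{N}}-e^{(p-q)i\frac{2\pi}{N}}\right)\alpha_{\vert g}=0,
\end{align*}
which is equivalent to $\left(e^{2\pi i\frac{m+p-q}{N}}-1\right)\alpha_{\vert g}=0$. If $m+p-q\notin N\Z$, the scalar factor is nonzero. So $\alpha_{\vert g}=0$ for every $g$, and therefore $\alpha=0$.

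\emph{Main obstacle.} The only delicate step is getting the sign conventions right. One has to check that the action of $H\ni\gamma$ on $\Lambda^{p,q}\frp^\ast$ in the definition of $C^\infty(G,V)^H$ is really the coadjoint action $\alpha\mapsto\alpha\circ\Ad_{\gamma^{-1}}$, and that this matches the convention behind (\ref{contactlabel29}). With the opposite sign the condition would read $m-p+q\in N\Z$ instead.

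To settle this, I would verify (\ref{contactlabel29}) on a basis covector of $(\frp^{1,0})^\ast$ using (\ref{contactlabel28}). The coadjoint action of $\exp_G(t\xi)$ sends $\alpha$ to $\alpha\circ\Ad_{\exp_G(-t\xi)}$. For $X\in\frp^{1,0}$, i.e.\ $JX=iX$, we have $\Ad_{\exp_G(-t\xi)}X=e^{-it}X$. So a $(1,0)$-covector picks up the factor $e^{-it}$, consistent with the weight $e^{(q-p)it}$ in (\ref{contactlabel29}). Plugging this into the computation above reproduces the stated condition $m+p-q\in N\Z$.
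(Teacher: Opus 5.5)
Your proof is correct and follows the paper's argument: you evaluate $\alpha$ at $g\gamma$ for the generator $\gamma=\exp_G(\frac{2\pi}{N}\xi)$ of $\Gamma$, once via $H$-equivariance using (\ref{contactlabel29}) and once via the $S^1$-weight condition, and compare the two scalars. Your sign bookkeeping is in fact slightly more careful than the paper's: its second displayed identity writes $e^{m\cdot i\frac{2\pi}{N}}$, whereas the definition of the weight space gives $e^{-m\cdot i\frac{2\pi}{N}}$, which is the factor you use and the one that actually yields the condition $m+p-q\in N\cdot\Z$.
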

\begin{proof}
    For $\alpha\in C^\infty(G,\Lambda^{p,q}\frp^\ast)^H_m$ and the generator $\gamma:=\exp_G(\frac{2 \pi}{N}\xi)$ of $\Gamma$, it follows from $H$-equivariance and the $S^1$-weight on this space that
    \begin{enumerate}
        \item $\alpha_{\vert g\gamma} = \gamma^{-1}\cdot \alpha_{\vert g} = e^{(p-q)\cdot i\frac{2\pi}{N}}\alpha_{\vert g}$,
        \item $\alpha_{\vert g\gamma} = e^{m\cdot i\frac{2\pi}{N}}\alpha_{\vert g}$.
    \end{enumerate}
    Therefore, due to 
    \begin{align*}
        e^{(p-q)\cdot i\frac{2\pi}{N}} = e^{m\cdot i\frac{2\pi}{N}} \Leftrightarrow 1 = e^{\frac{m+p-q}{N}\cdot 2\pi i},
    \end{align*}
    the possible values of $m\in \Z$ yielding non-trivial spaces are restricted to those with $m+p-q\in N\cdot \Z$.
\end{proof}

\noindent As we shall see from the following results, $\widetilde{\Lcal_T}$ acts as a scalar on the spaces $C^\infty(G,\Lambda^{p,q}\frp^\ast)^H_m$.
\begin{lemma}
    \label{contactlabel31}
    For $\alpha\in C^\infty(G,\Lambda^{p,q} \frp^\ast)^H$ and $Y_1,\ldots,Y_k\in\frp$, $k=p+q$, we have
    \begin{align*}
        (\widetilde{\Lcal_T}\alpha) (Y_1,\ldots,Y_k) = -\frac{1}{2n} \Big( \Lcal_\xi(\alpha(Y_1,\ldots,Y_k)) + (T_e\rho_{p,q}(\xi)\alpha)(Y_1,\ldots,Y_k) \Big),
    \end{align*}
    where $\rho_{p,q}$ is the dual adjoint representation on the $(p,q)$-forms.
\end{lemma}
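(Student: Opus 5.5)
We would compute $\widetilde{\Lcal_T}\alpha$ directly from the Cartan formula and the formula (\ref{contactlabel26}) for $\widetilde{d}$. Since $\alpha\in C^\infty(G,\Lambda^{p,q}\frp^\ast)^H$ corresponds to a form in $\frA^k(\Dcal)\subset\kernel(\iota_T)$, we have $\widetilde{\Lcal_T}\alpha=\iota_T\widetilde{d}\alpha$. Hence
\begin{align*}
(\widetilde{\Lcal_T}\alpha)(Y_1,\ldots,Y_k)=(\widetilde{d}\alpha)(T,Y_1,\ldots,Y_k)
\end{align*}
for $Y_1,\ldots,Y_k\in\frp$. We then expand the right-hand side with (\ref{contactlabel26}), taking $Y_0:=T$, and sort the terms into three groups.

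\emph{First sum.} The term $j=0$ gives $\Lcal_T(\alpha(Y_1,\ldots,Y_k))$. The terms $j\geq 1$ all contain $T$ as an argument of $\alpha$. They vanish because $\alpha$ takes values in $\Lambda^k\frp^\ast$, i.e.\ it annihilates $\R\cdot\xi$.

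\emph{Bracket terms with $0<j<l$.} These involve $\alpha([Y_j,Y_l],T,\ldots)$. They vanish for the same reason, since $T$ is again an argument of $\alpha$. Independently, $[\frp,\frp]\subset\frk$ and $\alpha$ also annihilates $\frk$ (identifying $\frg/\frh$ with $\frp\oplus\R\xi$).

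\emph{Bracket terms with $j=0$.} These give
\begin{align*}
\sum_{l=1}^k(-1)^{l}\alpha([T,Y_l],Y_1,\ldots,\widehat{Y_l},\ldots,Y_k).
\end{align*}
Moving $[T,Y_l]$ back to slot $l$ costs a sign $(-1)^{l-1}$, so the sum equals $-\sum_l\alpha(Y_1,\ldots,[T,Y_l],\ldots,Y_k)$. By (\ref{contactlabel23}), $[T,Y_l]\in\frp$, so every term is well defined.

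Now substitute $T=-\frac{1}{2n}\xi$; both groups of surviving terms are linear in $T$. The first term becomes $-\frac{1}{2n}\Lcal_\xi(\alpha(Y_1,\ldots,Y_k))$. The bracket sum becomes
\begin{align*}
-\frac{1}{2n}\Big(-\sum_l\alpha(Y_1,\ldots,\ad_\xi Y_l,\ldots,Y_k)\Big).
\end{align*}
The dual adjoint (coadjoint) representation on $\Lambda^{p,q}\frp^\ast$ has differential $(T_e\rho_{p,q}(\xi)\alpha)(Y_1,\ldots,Y_k)=-\sum_l\alpha(Y_1,\ldots,\ad_\xi Y_l,\ldots,Y_k)$. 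This is exactly the bracket contribution, which gives the claimed formula.

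The main obstacle is the bookkeeping of signs and conventions rather than any conceptual step. Three points need checking: the sign convention for the dual representation ($\rho(g)\alpha=\alpha\circ\Ad_{g^{-1}}$, whose derivative carries the minus sign), the permutation sign when reinserting $[T,Y_l]$, and the fact that $\ad_\xi$ preserves $\frp\otimes\C$ and its $(1,0)/(0,1)$ parts, so that $T_e\rho_{p,q}(\xi)$ really acts on $\Lambda^{p,q}\frp^\ast$. As a consistency check, combining the formula with (\ref{contactlabel29}) should give the action of $\widetilde{\Lcal_T}$ on the weight spaces $C^\infty(G,\Lambda^{p,q}\frp^\ast)^H_m$. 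There $\Lcal_\xi$ acts by $-im$ via (\ref{contactlabel30}), and $T_e\rho_{p,q}(\xi)$ acts by $i(q-p)$, so $\widetilde{\Lcal_T}=\frac{i(m+p-q)}{2n}$ and $-\Lcal_T^2$ acts by $\big(\frac{m+p-q}{2n}\big)^2$, matching the introduction.
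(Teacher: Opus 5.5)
Your proposal is correct and is essentially the paper's own argument. It uses Cartan's formula with $\iota_T\alpha=0$, expands $(\widetilde{d}\alpha)(T,Y_1,\ldots,Y_k)$ via (\ref{contactlabel26}), picks up the sign $(-1)^{l}(-1)^{l-1}=-1$ when reinserting $[T,Y_l]$, substitutes $T=-\frac{1}{2n}\xi$, and identifies the bracket sum with the derivative of the coadjoint action. Your explicit explanation of why the remaining terms of (\ref{contactlabel26}) vanish is slightly more careful than the paper, which simply drops them.
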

\begin{proof}
    We have $\alpha\in C^\infty(G,\Lambda^{p,q} \frp^\ast)^H\cong \frA^{p,q}(\Dcal)\subset \kernel(\iota_T)$. Therefore, due to the Cartan homotopy formula $\Lcal_T=d\circ \iota_T + \iota_T\circ d$ \cite[Theorem~2.3.7]{kkdiffgeoENG}, we obtain
    \begin{align*}
        (\widetilde{\Lcal_T}\alpha) (Y_1,\ldots,Y_k) =& (\widetilde{d}\alpha) (T,Y_1,\ldots,Y_k)\\
        =& \Lcal_T(\alpha(Y_1,\ldots,Y_k))+\sum\limits_{1\leq j\leq k} (-1)^j \alpha([T,Y_j],Y_1,\ldots,\widehat{Y_j},\ldots, Y_k)\\
        =& \Lcal_T(\alpha(Y_1,\ldots,Y_k)) + \sum\limits_{1\leq j\leq k} -\alpha(Y_1,\ldots,[T,Y_j],\ldots, Y_k)\\
        =& -\frac{1}{2n}(\Lcal_\xi(\alpha(Y_1,\ldots,Y_k)) + \sum\limits_{1\leq j\leq k} -\alpha(Y_1,\ldots,\underbrace{[\xi,Y_j]}_{=J(Y_j)},\ldots, Y_j))\\
        =& -\frac{1}{2n} \Big( \Lcal_\xi(\alpha(Y_1,\ldots,Y_k)) + (T_e\rho_{p,q}(\xi)\alpha)(Y_1,\ldots,Y_k )\Big).
    \end{align*}
    The latter is obtained as follows: For a $1$-form $\beta\in\frg^\ast$ and $Y\in\frg$, we have
    \begin{align*}
        (\ad_\xi^\ast (\beta))(Y)= \frac{\partial}{\partial t}_{\vert t=0} \beta(\Ad_{\exp_G(-t\xi)}Y) = \beta(\frac{\partial}{\partial t}_{\vert t=0}\Ad_{\exp_G(-t\xi)}Y) = -\beta([\xi,Y]).
    \end{align*}
    By applying the product rule, and since the action is applied to each factor, we obtain the desired result.
\end{proof}
\begin{corollary}
    For $\alpha\in C^\infty(G,\Lambda^{p,q} \frp^\ast)^H_m$, we obtain
    \begin{align*}
        \widetilde{\Lcal_T}\alpha = \frac{m+p-q}{2n}\cdot i \alpha.
    \end{align*}
    Therefore,
    \begin{align}
        \widetilde{-\Lcal_T^2}\alpha = \left(\frac{m+p-q}{2n}\right)^2 \alpha. \label{contactlabel6}
    \end{align}
\end{corollary}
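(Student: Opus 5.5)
We plan to derive the corollary directly from Lemma~\ref{contactlabel31}, computing separately the two terms on its right-hand side for $\alpha\in C^\infty(G,\Lambda^{p,q}\frp^\ast)^H_m$.

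For the first term, we use formula~(\ref{contactlabel30}) for the Lie derivative along the left-invariant vector field $\xi$. By the definition of the weight space,
\begin{align*}
    \Lcal_\xi(\alpha(Y_1,\ldots,Y_k))_{\vert g} = \frac{\partial}{\partial t}_{\vert t=0}\alpha_{\vert g\exp_G(t\xi)}(Y_1,\ldots,Y_k) = \frac{\partial}{\partial t}_{\vert t=0} e^{-m\cdot it}\alpha_{\vert g}(Y_1,\ldots,Y_k) = -mi\,\alpha_{\vert g}(Y_1,\ldots,Y_k).
\end{align*}
For the second term, we differentiate (\ref{contactlabel29}) at $t=0$. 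Since $\exp_G(t\xi)$ acts on $\Lambda^{p,q}\frp^\ast$ by $e^{(q-p)it}$, we get $T_e\rho_{p,q}(\xi)\alpha=(q-p)i\,\alpha$ pointwise.

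The only step needing care is consistency of conventions. The sign in Lemma~\ref{contactlabel31} comes from $\ad_\xi^\ast\beta=-\beta([\xi,\cdot])$, and it has to match the sign convention of the dual action in (\ref{contactlabel29}). To check this, we take a $(1,0)$-form $\beta$, so $\beta\circ J=i\beta$. Then $-\beta([\xi,Y])=-\beta(JY)=-i\beta(Y)$, which gives weight $q-p=-1$, as required. With the signs confirmed, adding the two terms and multiplying by $-\frac{1}{2n}$ gives
\begin{align*}
    \widetilde{\Lcal_T}\alpha = -\frac{1}{2n}\big(-mi+(q-p)i\big)\alpha = \frac{m+p-q}{2n}\,i\,\alpha.
\end{align*}
Applying this scalar twice yields $\widetilde{-\Lcal_T^2}\alpha=-\left(\frac{m+p-q}{2n}i\right)^2\alpha=\left(\frac{m+p-q}{2n}\right)^2\alpha$, which is (\ref{contactlabel6}).

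Finally, $C^\infty(G,\Lambda^{p,q}\frp^\ast)^H_m$ is preserved by $\widetilde{\Lcal_T}$, since the operator acts there as a scalar. So no closure issue arises, and by (\ref{contactlabel34}) the corollary describes $\Lcal_T$ on all of $\frA^{p,q}(\Dcal)$.
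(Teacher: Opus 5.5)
Your proposal is correct and follows the paper's proof essentially verbatim. Both arguments evaluate the two terms of Lemma~\ref{contactlabel31} on the weight space, obtaining $-mi$ from (\ref{contactlabel30}) and $(q-p)i$ from (\ref{contactlabel29}), and then multiply by $-\frac{1}{2n}$; your explicit check on a $(1,0)$-form that the sign convention $\ad_\xi^\ast\beta=-\beta([\xi,\cdot])$ matches (\ref{contactlabel29}) is a helpful verification that the paper leaves implicit.
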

\begin{proof}
    Let $\alpha\in C^\infty(G,\Lambda^{p,q} \frp^\ast)^H_m$. By restricting to $C^\infty(G,\Lambda^{p,q} \frp^\ast)^H_m$, we can refine the result from Lemma~\ref{contactlabel31} even further, since this depends on the action of $S^1$ from the right, so using (\ref{contactlabel30}) and the definition of $C^\infty(G,\Lambda^{p,q} \frp^\ast)^H_m$ for $Y_1,\ldots,Y_k\in\frp$, we immediately obtain
    \begin{align*}
        \Lcal_\xi(\alpha(Y_1,\ldots,Y_k)) =& \frac{\partial}{\partial t}_{\vert t=0} \alpha_{\vert g\exp_G(t\xi)} (Y_1,\ldots,Y_k)\\
        =& \frac{\partial}{\partial t}_{\vert t=0} e^{-m\cdot it}\alpha_{\vert g} (Y_1,\ldots,Y_k)\\
        =& -m\cdot i \alpha_{\vert g} (Y_1,\ldots,Y_k).
    \end{align*}
    Regardless of the $S^1$-weight $m$, it follows from (\ref{contactlabel29}) that
    \begin{align*}
        (T_e\rho_{p,q}(\xi)\alpha) = (q-p)\cdot i\alpha.
    \end{align*}
    In summary, this gives us
    \begin{align*}
        \widetilde{\Lcal_T}\alpha = \frac{m+p-q}{2n}\cdot i \alpha.
    \end{align*}
\end{proof}
Thus, we have determined $-\Lcal_T^2$ on the entire space $\frA^{\bullet,\bullet}(\Dcal)$ and, since we also know that $\widetilde{-\Lcal_T^2}$ acts a scalar on $C^\infty(G,\Lambda^{p,q} \frp^\ast)^H_m$, we wish to examine the space $C^\infty(G,\Lambda^{p,q} \frp^\ast)^H_m$ in greater detail.\\
Since we are applying the results from Section~\ref{contactsect7.2}, we are in fact only interested in $\kernel (\Delta_{\dolb_\Dcal})\cap\frA^{p,q}(\Dcal)_m$ for $\frA^{p,q}(\Dcal)_m:=\psi^{-1}(C^\infty(G,\Lambda^{p,q} \frp^\ast)^H_m)$. However, it turns out that this space corresponds to a certain Dolbeault cohomology on $G/K$, as we shall see below.\\
First, we shall interpret the space $C^\infty(G,\Lambda^{p,q} \frp^\ast)^H_m$ with the help of the base manifold $G/K$, since the transition to $K$\nobreakdash-equivariance of the functions in this space is, in fact, almost already given, as we know how the action of $S^1$ from the right behaves and we have $K=(H\times S^1)/\Gamma$. For $m+p-q=0$, we know that
\begin{align*}
    C^\infty(G,\Lambda^{p,q}\frp^\ast)^H_{q-p} =  C^\infty(G,\Lambda^{p,q}\frp^\ast)^K.
\end{align*}
However, for $0\neq m+p-q\in N\cdot \Z$, the $S^1$-weight $m$ does not correspond to the actual $S^1$-weight $q-p$ of $\Lambda^{p,q}\frp^\ast$, which is why we must determine how to compensate this. Since $S^1$-weights are integers, this can easily be achieved by adding a suitable one-dimensional complex representation for compensation.\\

Let $m+p-q\in N\cdot \Z$. We consider the $S^1$-representation $L^{m+p-q}:=\C$ where $\exp_G(t\xi)\cdot v:= e^{(m+p-q)\cdot it}v$ for $\exp_G(t\xi)\in S^1$. For the generating element $\gamma=\exp_G(\frac{2 \pi}{N}\xi)$ of $\Gamma$, we then have $\gamma\cdot v=e^{\frac{2 \pi(m+p-q)}{N}\cdot i}v=v$, which is why this representation of $S^1$ extends to a representation of $K=(H\times S^1)/\Gamma$, which we also denote by $L^{m+p-q}$. We equip this representation with the standard scalar product of $\C$, which is clearly invariant under the action of $K$. We choose $v:=1\in L^{m+p-q}$, which has norm $1$. By 
\begin{align*}
    \Lcal^{m+p-q}:=G\times_K L^{m+p-q}
\end{align*}
we denote the associated holomorphic line bundle, see \cite{tirao1970homogenous}.
\begin{lemma}
    Let $m\in\Z$ such that $m+p-q\in N\cdot \Z$. Then
    \begin{align*}
        \mu\colon C^\infty(G,\Lambda^{p,q}\frp^\ast)^H_m &\rightarrow C^\infty(G,(\Lambda^{p,q}\frp^\ast)\otimes L^{m+p-q})^K\\
        \alpha &\mapsto \alpha\otimes v
    \end{align*}
    is an isomorphism. This isomorphism translates to an isomorphism of $\frA^{p,q}(\Dcal)_m$ and $\frA^{p,q}(G/K,\Lcal^{m+p-q})$.
\end{lemma}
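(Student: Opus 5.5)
The plan is to verify directly that $\mu$ is well-defined, i.e.\ that $\alpha\otimes v$ is $K$-equivariant. Then we exhibit an inverse map, and finally transport the statement through the isomorphisms $\psi$.

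\textbf{Well-definedness.} Since $K=(H\times S^1)/\Gamma$ is generated by $H$ and $S^1$, it suffices to check equivariance under each separately. Here $K$ acts on $(\Lambda^{p,q}\frp^\ast)\otimes L^{m+p-q}$ diagonally.
\begin{itemize}
\item For $h\in H$, the line $L^{m+p-q}$ is the trivial $H$-representation. To see this, note that $K\to K/H\cong S^1/\Gamma$, and by construction $H$ acts trivially: $H$ sits in $K$ as the factor $H\times\{1\}$ and the representation is pulled back from the $S^1$-factor. Hence $(\alpha\otimes v)(gh)=\rho(h^{-1})\alpha(g)\otimes v$, as required.
\item For $z=\exp_G(t\xi)$ we use the weight condition $\alpha_{\vert g z}=e^{-mit}\alpha_{\vert g}$ together with (\ref{contactlabel29}). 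The required identity is
\[
\alpha_{\vert gz}\otimes v=\bigl(z^{-1}\cdot\alpha_{\vert g}\bigr)\otimes\bigl(z^{-1}\cdot v\bigr)=e^{-(q-p)it}e^{-(m+p-q)it}\,\alpha_{\vert g}\otimes v=e^{-mit}\,\alpha_{\vert g}\otimes v,
\]
which holds.
\end{itemize}
Since the two actions agree on $\Gamma$ (this is exactly the condition $m+p-q\in N\Z$ that made $L^{m+p-q}$ a $K$-representation), equivariance under $K$ follows.

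\textbf{Inverse.} Every element of $C^\infty(G,(\Lambda^{p,q}\frp^\ast)\otimes L^{m+p-q})^K$ is uniquely of the form $\beta\otimes v$ with $\beta\in C^\infty(G,\Lambda^{p,q}\frp^\ast)$, because $L^{m+p-q}=\C v$ is one-dimensional. Reading the computation above backwards shows that $K$-equivariance of $\beta\otimes v$ forces two things: $H$-equivariance of $\beta$, and $\beta_{\vert g\exp_G(t\xi)}=e^{-mit}\beta_{\vert g}$, i.e.\ $\beta$ has $S^1$-weight $m$. Hence $\beta\otimes v\mapsto\beta$ is a two-sided inverse of $\mu$. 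Moreover $\mu$ is $G$-equivariant for the left actions, since it only acts pointwise on values, and it is isometric for the scalar products (\ref{contactlabel33}), since $\lVert v\rVert=1$.

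\textbf{Translation to forms.} Compose with the isomorphisms already established: $\psi\colon\frA^{p,q}(\Dcal)\to C^\infty(G,\Lambda^{p,q}\frp^\ast)^H$ from (\ref{contactlabel4}), restricted to weight $m$ (which defines $\frA^{p,q}(\Dcal)_m$), and $\psi\colon\frA^{p,q}(G/K,\Lcal^{m+p-q})\to C^\infty(G,\Lambda^{p,q}\frp^\ast\otimes L^{m+p-q})^K$. The composite $\psi^{-1}\circ\mu\circ\psi$ is then the claimed isomorphism.

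The main point needing care is the $H$-triviality of $L^{m+p-q}$ and the compatibility on $\Gamma=H\cap S^1$. An element $\gamma\in\Gamma$ acts on $\alpha$ both through $\rho$ (as an element of $H$) and through the weight $m$ (as an element of $S^1$). These two actions agree precisely by the previous lemma. This is what makes the diagonal $K$-action on the tensor product consistent, and it is the step I would write out most carefully.
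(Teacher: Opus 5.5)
Your proof is correct and follows the paper's argument. The paper likewise uses that $H$ acts trivially on $L^{m+p-q}$, checks $S^1$-equivariance via $e^{-(q-p)it}e^{-(m+p-q)it}=e^{-mit}$, obtains surjectivity from the one-dimensionality of $L^{m+p-q}$, and then transports everything to forms via $\psi$. Your remarks on compatibility over $\Gamma$ are not actually needed for the equivariance check, since verifying $H$- and $S^1$-equivariance separately already gives $K$-equivariance; that compatibility is only what makes $L^{m+p-q}$ a $K$-representation in the first place.
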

\begin{proof}
    Since $H$ acts trivially on $L^{m+p-q}$, we only need to check the $S^1$-equivariance.\\
    For $\alpha \in C^\infty(G,\Lambda^{p,q}\frp^\ast)^H_m$, we have
    \begin{align*}
        \mu (\alpha)_{\vert g\exp_G(t\xi)}=&\alpha_{\vert g\exp_G(t\xi)}\otimes v = e^{-m\cdot it} \alpha_{\vert g}\otimes v\\
        =& e^{-(q-p)\cdot it}\cdot e^{-(m+p-q)\cdot it} \alpha_{\vert g}\otimes v\\
        =& \exp_G(-t\xi)\cdot (\alpha_{\vert g}\otimes v)\\
        =& (\exp_G(t\xi))^{-1}\cdot (\alpha_{\vert g}\otimes v),
    \end{align*}
    where $e^{-(q-p)\cdot it}$ results from the action on $\Lambda^{p,q}\frp^\ast$ and $e^{-(m+p-q)\cdot it}$ results from the action on $L^{m+p-q}$. The second term therefore provides the necessary correction. Overall, $\mu$ is well-defined.\\
    Every element of $C^\infty(G,(\Lambda^{p,q}\frp^\ast)\otimes L^{m+p-q})^K$ can be written as $\alpha\otimes v$ for some $\alpha\in C^\infty(G,\Lambda^{p,q}\frp^\ast)^H_m$. Thus, it directly follows that $\mu$ is an isomorphism.
\end{proof}
\noindent Conversely, for $j\in N\cdot \Z$, $C^\infty(G,\Lambda^{p,q}\frp^\ast)^H_{j-p+q}\cong C^\infty(G,(\Lambda^{p,q}\frp^\ast)\otimes L^j)^K$.\\
Just as in (\ref{contactlabel27}), let
\begin{align*}
    X_1,\ldots,X_n,X_{n+1}=J(X_1),\ldots,X_{2n}=J(X_n)
\end{align*}
be a basis for $\frp$, and the corresponding complex bases are
\begin{align*}
    (X_j^{1,0}:=X_j-iX_{n+j})_{1\leq j\leq n} &\text{ of } \frp^{1,0}\\
    \text{and } (X_j^{0,1}:=X_j+iX_{n+j})_{1\leq j\leq n} &\text{ of } \frp^{0,1}.
\end{align*}
For $\alpha\in C^\infty(G,\Lambda^k\frp^\ast)^K$ and $Y_0,\ldots,Y_k\in\frp$, we obtain
\begin{align*}
    (\widetilde{d}\alpha)_{\vert g}(Y_0,\ldots,Y_k) = \sum\limits_{j=0}^k (-1)^j \Lcal_{Y_j} (\alpha(Y_0,\ldots,\widehat{Y}_j,\ldots,Y_k))_{\vert g},
\end{align*}
from (\ref{contactlabel26}), since $[Y_j,Y_l]\in \frk$. 
We extend the Lie derivative to the complexified case by defining $\Lcal_{X+iY}:=\Lcal_X + i\Lcal_Y$ for $X,Y\in\frg$. This allows us to describe the \mbox{Dolbeault} operator using the bases of $\frp^{1,0}$ and $\frp^{0,1}$ above. For $\alpha\in C^\infty(G,\Lambda^{p,q} \frp^\ast)^K$ and $1\leq l_1,\ldots,l_p,k_0,\ldots,k_q\leq n$,
\begin{align*}
    &(\widetilde{\dolb}\alpha)(X^{1,0}_{l_1},\ldots,X^{1,0}_{l_p},X^{0,1}_{k_0},\ldots,X^{0,1}_{k_q})\\
    =& \sum\limits_{j=0}^q (-1)^{p+j} \Lcal_{X^{0,1}_{k_j}}(\alpha(X^{1,0}_{l_1},\ldots,X^{1,0}_{l_p},X^{0,1}_{k_0},\ldots,\widehat{X^{0,1}_{k_j}},\ldots,X^{0,1}_{k_p})).
\end{align*}
In exactly the same way, we can also describe $\widetilde{\dolb_\Dcal}$ on $C^\infty(G,\Lambda^{p,q}\frp^\ast)^H$, because for $\alpha\in C^\infty(G,\Lambda^{p,q} \frp^\ast)^H$ and $1\leq l_1,\ldots,l_p,k_0,\ldots,k_q\leq n$,we have
\begin{align*}
    &(\widetilde{\dolb_\Dcal}\alpha)(X^{1,0}_{l_1},\ldots,X^{1,0}_{l_p},X^{0,1}_{k_0},\ldots,X^{0,1}_{k_q})\\
    =& \sum\limits_{j=0}^q (-1)^{p+j} \Lcal_{X^{0,1}_{k_j}}(\alpha(X^{1,0}_{l_1},\ldots,X^{1,0}_{l_p},X^{0,1}_{k_0},\ldots,\widehat{X^{0,1}_{k_j}},\ldots,X^{0,1}_{k_p})).
\end{align*}
We now would like to investigate how $\widetilde{\dolb_\Dcal}$ behaves on $C^\infty(G,\Lambda^{p,q} \frp^\ast)^H_m$ for $m+p-q\in N\cdot \Z$. For this, let $\alpha\in C^\infty(G,\Lambda^{p,q} \frp^\ast)^H_m$ and $z:=\exp_G(s\xi)\in S^1$. For $X\in\frg$, we have
\begin{align*}
    z\exp_G(tX)=z\exp_G(tX)z^{-1}z = \exp_G(t\Ad_z(X))z.
\end{align*}
It therefore follows from (\ref{contactlabel30}) and Remark~\ref{contactlabel32} that
\begin{align*}
    &(\widetilde{\dolb_\Dcal}\alpha)_{\vert gz}(X^{1,0}_{l_1},\ldots,X^{1,0}_{l_p},X^{0,1}_{k_0},\ldots,X^{0,1}_{k_q})\\
    =& \begin{aligned}[t]
            \sum\limits_{j=0}^q (-1)^{p+j} \frac{\partial}{\partial t}_{\vert t=0} &(\alpha_{\vert gz\exp_G(tX_{k_j})} -i  \alpha_{\vert gz\exp_G(tJ(X_{k_j}))}) \\
    &(X^{1,0}_{l_1},\ldots,X^{1,0}_{l_p},X^{0,1}_{k_0},\ldots,\widehat{X^{0,1}_{k_j}},\ldots,X^{0,1}_{k_p})
    \end{aligned}\\
    =& \begin{aligned}[t]
        \sum\limits_{j=0}^q (-1)^{p+j} \frac{\partial}{\partial t}_{\vert t=0} &(\alpha_{\vert g\exp_G(t\Ad_z(X_{k_j})z)} -i  \alpha_{\vert g\exp_G(t\Ad_z(J(X_{k_j})))z}) \\
    &(X^{1,0}_{l_1},\ldots,X^{1,0}_{l_p},X^{0,1}_{k_0},\ldots,\widehat{X^{0,1}_{k_j}},\ldots,X^{0,1}_{k_p})
    \end{aligned}\\
    =& \sum\limits_{j=0}^q (-1)^{p+j} e^{-m\cdot is} \Lcal_{\Ad_z(X_{k_j}-iJ(X_{k_j}))}(\alpha(X^{1,0}_{l_1},\ldots,X^{1,0}_{l_p},X^{0,1}_{k_0},\ldots,\widehat{X^{0,1}_{k_j}},\ldots,X^{0,1}_{k_p}))_{\vert g}\\
    =& \sum\limits_{j=0}^q (-1)^{p+j} e^{-m\cdot is} \Lcal_{\Ad_z(X^{0,1}_{k_j})}(\alpha(X^{1,0}_{l_1},\ldots,X^{1,0}_{l_p},X^{0,1}_{k_0},\ldots,\widehat{X^{0,1}_{k_j}},\ldots,X^{0,1}_{k_p}))_{\vert g}\\
    =& e^{-(m+1)\cdot is} (\widetilde{\dolb_\Dcal}\alpha)_{\vert g} (X^{1,0}_{l_1},\ldots,X^{1,0}_{l_p},X^{0,1}_{k_0},\ldots,X^{0,1}_{k_q}).
\end{align*}
\noindent Thus, $\widetilde{\dolb_\Dcal}\colon C^\infty(G,\Lambda^{p,q} \frp^\ast)^H_m\rightarrow C^\infty(G,\Lambda^{p,q} \frp^\ast)^H_{m+1}$.\\
We have $\mu\circ \widetilde{\dolb^{\Lcal^{m+p-q}}} = \widetilde{\dolb_\Dcal}\circ \mu$, i.e.\ there is a commutative diagram
\begin{equation*}
    \begin{tikzcd}
        \frA^{p,q}(\Dcal)_m
        \arrow[r, "\psi^{-1}\circ\mu\circ\psi"]
        \arrow[d, "\dolb_\Dcal"']
        &
        \frA^{p,q}(G/K,\Lcal^{m+p-q})
        \arrow[d, "\dolb^{\Lcal^{m+p-q}}"]
        \\
        \frA^{p,q+1}(\Dcal)_{m+1}
        \arrow[r, "\psi^{-1}\circ\mu\circ\psi"']
        &
        \frA^{p,q+1}(G/K,\Lcal^{m+p-q}).
    \end{tikzcd}
\end{equation*}
In this regard, $\dolb^{\Lcal^{m+p-q}}$ is precisely the Dolbeault operator on $\Lcal^{m+p-q}$, which is defined for $\alpha\in C^\infty (G,L^{m+p-q})^K$ by
\begin{align*}
    (\widetilde{\dolb^{\Lcal^{m+p-q}}}\alpha)(X^{0,1}_j):= \Lcal_{X^{0,1}_j}(\alpha),
\end{align*}
see also \cite{tirao1970homogenous} and \cite{kobayashiDGMCVB}. This relationship between the operators mentioned above was already discussed in \cite{rumin2026}, but we want to derive it once again ourselves here by only using representation theory.\\
Since $\mu$ is an isometry with respect to the scalar products defined in (\ref{contactlabel33}), the same statement also follows for the adjoints of the operators.\\
For $m+p-q\in N\cdot \Z$, let
\begin{itemize}
    \item $\Hcal^{p,q}_{\dolb_\Dcal}(G/H):=\kernel(\Delta_{\dolb_\Dcal})\cap \frA^{p,q}(\Dcal)$ denote the space of $\dolb_\Dcal$-harmonic forms of degree $(p,q)$,
    \item $\Hcal^{p,q}_{\dolb_\Dcal,m}(G/H):=\kernel(\Delta_{\dolb_\Dcal})\cap \frA^{p,q}(\Dcal)_m$ denote the space of $\dolb_\Dcal$-harmonic forms of degree $(p,q)$ with weight $m$,
    \item $\Hcal^{p,q}_{\dolb}(G/K,\Lcal^{m+p-q}):=\kernel(\Delta_{\dolb^{\Lcal^{m+p-q}}})\cap \frA^{p,q}(G/K,\Lcal^{m+p-q})$ denote the space of $\dolb^{\Lcal^{m+p-q}}$-harmonic forms of degree $(p,q)$, where $\Delta_{\dolb^{\Lcal^{m+p-q}}} = \dolb^{\Lcal^{m+p-q}}(\dolb^{\Lcal^{m+p-q}})^\ast + (\dolb^{\Lcal^{m+p-q}})^\ast \dolb^{\Lcal^{m+p-q}}$.
\end{itemize}
According to \cite[Theorem~5.24]{Voisin}, the space $\Hcal^{p,q}_{\dolb}(G/K,\Lcal^{m+p-q})$ of $\dolb^{\Lcal^{m+p-q}}$-harmonic forms is isomorphic to the Dolbeault cohomology 
\begin{align*}
    H^{p,q}_{\dolb}(G/K,\Lcal^{m+p-q}) = \dfrac{
\kernel (\dolb^{\Lcal^{m+p-q}}_{\vert \frA^{p,q}(G/K,\Lcal^{m+p-q})})
}{
\image (\dolb^{\Lcal^{m+p-q}}_{\vert \frA^{p,q-1}(G/K,\Lcal^{m+p-q})})
}.
\end{align*}
From the results above, we directly obtain the following Lemma.
\begin{lemma}
    For $m+p-q\in N\cdot \Z$, we have
    \begin{align*}
        \Hcal^{p,q}_{\dolb_\Dcal,m}(G/H) \cong \Hcal^{p,q}_{\dolb}(G/K,\Lcal^{m+p-q}) \cong H^{p,q}_{\dolb}(G/K,\Lcal^{m+p-q}).
    \end{align*}
\end{lemma}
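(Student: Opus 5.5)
The plan is to transport the Laplacian $\Delta_{\dolb_\Dcal}$ through the isometry $\Phi:=\psi^{-1}\circ\mu\circ\psi\colon \frA^{p,q}(\Dcal)_m\rightarrow \frA^{p,q}(G/K,\Lcal^{m+p-q})$ and then invoke Hodge theory on the compact K\"ahler manifold $G/K$.

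\emph{Step 1: the weight decomposition is compatible with $\Delta_{\dolb_\Dcal}$.}
We already know that $\dolb_\Dcal$ maps weight $m$ in bidegree $(p,q)$ to weight $m+1$ in bidegree $(p,q+1)$. Both sides then carry the same twist, since $(m+1)+p-(q+1)=m+p-q$.

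Next take adjoints with respect to the $L^2$-products. By (\ref{contactlabel25}) these agree up to a constant with the products (\ref{contactlabel33}). The decomposition (\ref{contactlabel34}) is orthogonal, because it is the isotypic decomposition for the right $S^1$-action, which preserves the inner product. Hence $\dolb_\Dcal^\ast$ maps weight $m+1$ in bidegree $(p,q+1)$ back to weight $m$ in bidegree $(p,q)$. Consequently $\Delta_{\dolb_\Dcal}$ preserves each $\frA^{p,q}(\Dcal)_m$. Since $\Delta_{\dolb_\Dcal}$ is a nonnegative operator,
\begin{align*}
\kernel(\Delta_{\dolb_\Dcal})\cap\frA^{p,q}(\Dcal)_m=\kernel(\dolb_\Dcal)\cap\kernel(\dolb_\Dcal^\ast)\cap\frA^{p,q}(\Dcal)_m .
\end{align*}

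\emph{Step 2: intertwining.}
For fixed $j=m+p-q$, the maps $\Phi$ for all bidegrees $(p',q')$ with weight $m'=j-p'+q'$ assemble into one isomorphism $\bigoplus_{p',q'}\frA^{p',q'}(\Dcal)_{j-p'+q'}\cong \frA^{\bullet,\bullet}(G/K,\Lcal^j)$. The commutative diagram gives $\Phi\circ\dolb_\Dcal=\dolb^{\Lcal^j}\circ\Phi$. Since $\mu$ is an isometry (with $\lvert v\rvert=1$) and the constants $c$ in (\ref{contactlabel25}) can be matched, taking adjoints yields $\Phi\circ\dolb_\Dcal^\ast=(\dolb^{\Lcal^j})^\ast\circ\Phi$. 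Hence $\Phi\circ\Delta_{\dolb_\Dcal}=\Delta_{\dolb^{\Lcal^j}}\circ\Phi$, and $\Phi$ restricts to an isomorphism of the kernels. This gives the first isomorphism.

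\emph{Step 3: Hodge theorem.}
$\Lcal^j$ is a holomorphic Hermitian line bundle over the compact complex manifold $G/K$. Therefore \cite[Theorem~5.24]{Voisin} identifies $\Hcal^{p,q}_{\dolb}(G/K,\Lcal^j)$ with $H^{p,q}_{\dolb}(G/K,\Lcal^j)$, which gives the second isomorphism.

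The main obstacle is the adjoint statement in Step 2. One must make sure that the $L^2$-adjoint of $\dolb_\Dcal$ on $G/H$ corresponds to the adjoint with respect to (\ref{contactlabel33}) on $C^\infty(G,\cdot)^H$. The normalising constants for $G/H$ and for $G/K$ differ (by volume factors of $H$ versus $K$), but in each case the constant is uniform across all form degrees. It therefore drops out of the adjoint identities. A second point to check is that the pointwise Hermitian structure on $\Lambda^{p,q}\frp^\ast$ is the one induced by $-B$ in both settings, which holds by the preceding remark on $d\theta(\cdot,J\cdot)=-B$ on $\frp$.
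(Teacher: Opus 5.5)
Your proposal is correct and follows the paper's route: you transport $\Delta_{\dolb_\Dcal}$ through $\psi^{-1}\circ\mu\circ\psi$, using the commutative diagram for $\dolb_\Dcal$ and the isometry property of $\mu$ to handle the adjoints, and then apply Voisin's Hodge theorem on $G/K$. Your Step 1 (orthogonality of the $S^1$-weight decomposition, so that $\dolb_\Dcal^\ast$ respects weights) and your remark that the normalising constants are independent of the form degree make explicit what the paper compresses into the single sentence that the intertwining ``also follows for the adjoints''.
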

\noindent This yields
\begin{align}
    \dim (\Hcal_{\dolb_b,m}^{p,q}(G/H)) = \dim (H^{p,q}_{\dolb}(G/K,\Lcal^{m+p-q})),\label{contactlabel5}
\end{align}
which, together with the decomposition from (\ref{contactlabel34}) and the eigenvalues from (\ref{contactlabel6}), will be helpful in calculating the contact torsion.\\
At this point, we can also interpret the constant term $A$ in the zeta function very clearly. From (\ref{contactlabel12}) we know that
\begin{align*}
    A = \sum\limits_{k=0}^{2n} (-1)^k \dim_\C (\kernel(-\Lcal_T^2)\cap \kernel (\Delta_{\dolb_\Dcal})\cap\frA^k_\C(\Dcal)).
\end{align*}
The kernel of $-\Lcal_T^2$ is obtained directly from (\ref{contactlabel6}) by considering all $m\in\Z$ and $p,q\geq 0$ such that $m+p-q=0$. Due to equation (\ref{contactlabel5}) above and classical Hodge theory \cite[Section~0.7]{griffiths1994principles}, we immediately find that
\begin{align}
    A = \sum_{p,q\geq 0} (-1)^{p+q} \dim (H^{p,q}_{\dolb}(G/K)) = \chi(G/K). \label{contactlabel36}
\end{align}
\subsection{Computation of the Contact Torsion}
\label{contactsect8.3}
\begin{theorem}
\label{contactlabel37}
    Let $G/H$ a compact CR symmetric contact space as in Section~\ref{contactsect8.1}. We have
    \begin{align*}
        Z(s) = \chi(G/K)\left(1+ \left(2\frac{n}{N}\right)^{2s} 2\zeta_\mathrm{R} (2s)\right).
    \end{align*}
    Therefore, the analytic contact torsion is given by
    \begin{align*}
        T_\Ecal(G/H) = {\chi (G/K)} \cdot \ln\left(4\pi \frac{n}{N}\right) = {\frac{\vert W_G\vert}{\vert W_K\vert}} \cdot \ln\left(4\pi \frac{n}{N}\right),
    \end{align*}
    where $W_G$ and $W_K$ are the Weyl groups of $G$ and $K$, and we also have 
    \begin{align*}
        Z(0)=0.
    \end{align*}
\end{theorem}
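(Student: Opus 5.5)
We start from formula (\ref{contactlabel35}), which expresses $Z(s)$ as $\sum_{k}(-1)^k\zeta(-\Lcal_T^2\vert \kernel(\Delta_{\dolb_\Dcal})\cap\frA^k_\C(\Dcal))(s)$. Then we refine it with the $S^1$-weight decomposition (\ref{contactlabel34}). Since $\Delta_{\dolb_\Dcal}$ maps weight $m$ to weight $m$ (because $\dolb_\Dcal$ raises the weight by one and $\dolb_\Dcal^\ast$ lowers it), the harmonic space splits as $\bigoplus_m \Hcal^{p,q}_{\dolb_\Dcal,m}(G/H)$. On each summand, $-\Lcal_T^2$ acts by $\left(\frac{m+p-q}{2n}\right)^2$ by (\ref{contactlabel6}). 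Writing $j:=m+p-q\in N\cdot\Z$ and using (\ref{contactlabel5}), we get
\begin{align*}
Z(s)=A+\sum_{0\neq j\in N\Z}\left(\frac{2n}{\vert j\vert}\right)^{2s}\sum_{p,q}(-1)^{p+q}\dim H^{p,q}_{\dolb}(G/K,\Lcal^{j}),
\end{align*}
with $A=\chi(G/K)$ by (\ref{contactlabel36}).

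The key step is then to show that for every $j$ the alternating sum $\sum_{p,q}(-1)^{p+q}\dim H^{p,q}_{\dolb}(G/K,\Lcal^j)$ equals $\chi(G/K)$. Grouping by $p$, this is $\sum_p(-1)^p\chi(G/K,\Omega^p\otimes\Lcal^j)$, which is the holomorphic $\chi_y$-genus at $y=-1$ twisted by $\Lcal^j$. By Hirzebruch--Riemann--Roch it equals
\begin{align*}
\int_{G/K}\mathrm{ch}(\Lcal^j)\prod_i(1-e^{-x_i})\frac{x_i}{1-e^{-x_i}}=\int_{G/K}\mathrm{ch}(\Lcal^j)\,c_{n}(T(G/K)),
\end{align*}
and only the degree-zero part of $\mathrm{ch}(\Lcal^j)$ contributes. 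Hence the sum is $\chi(G/K)$, independent of $j$. Alternatively, one can use Atiyah--Bott localisation for a generic torus element, where each fixed point contributes $\pm1$ after the twist. Either way the claim follows, and $\chi(G/K)=\vert W_G\vert/\vert W_K\vert$ is the standard count of torus-fixed points on a generalized flag manifold, obtained via Hopf or Bott.

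We then sum over $j=Nl$ with $l\in\Z\setminus\{0\}$. This gives $\sum_{l\neq0}(2n/(N\vert l\vert))^{2s}=2(2n/N)^{2s}\zeta_\mathrm{R}(2s)$, which yields the stated formula for $Z(s)$. Since $\zeta_\mathrm{R}(0)=-1/2$, we get $Z(0)=\chi(1-1)=0$. Differentiating, $Z'(0)=\chi\bigl(2\ln(2n/N)\cdot2\zeta_\mathrm{R}(0)+4\zeta_\mathrm{R}'(0)\bigr)=\chi\bigl(-2\ln(2n/N)-2\ln(2\pi)\bigr)$, using $\zeta_\mathrm{R}'(0)=-\tfrac12\ln(2\pi)$. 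Therefore $T_\Ecal=-\tfrac12 Z'(0)=\chi\ln(4\pi n/N)$.

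The main obstacle is justifying the rearrangement of the series: the exchange of the sum over $j$ with the sum defining the zeta function, and the identification of the meromorphic continuation. The cleanest route is to work at the level of the heat trace $\vartheta(t)=\chi\sum_{l\in\Z}e^{-t(Nl/2n)^2}$, which converges absolutely, and then apply the Mellin transform to get the Epstein/Riemann zeta expression. Finite-dimensionality of each Dolbeault group ensures that every term is finite.
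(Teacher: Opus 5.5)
Your proposal is correct and follows the paper's proof essentially step for step. The shared steps are: the $S^1$-weight splitting of the $\dolb_\Dcal$-harmonic forms with eigenvalue $\left(\frac{m+p-q}{2n}\right)^2$, the identification with $H^{p,q}_{\dolb}(G/K,\Lcal^{m+p-q})$, Hirzebruch--Riemann--Roch together with $\sum_p(-1)^p\mathrm{ch}(\Lambda^pT^\ast)\mathrm{Td}=c_n$ to get $\chi(G/K)$ independently of the twist, and the final Riemann zeta computation. Only your Atiyah--Bott aside is imprecise: at a generic torus element each fixed point contributes a phase (the trace of $t$ on the fibre of the line bundle), not $\pm1$, and you would need continuity of the character at $t=e$ to recover the count. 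The main HRR argument does not depend on that aside.
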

\noindent As in \cite{rumin2012analytic} and \cite{kitaokaspheres}, $Z(0)=0$ implies that the torsion is invariant under constant scaling of $\theta$, i.e.\ $\theta\mapsto K\theta$ for a constant $K$.
\begin{proof}[{Proof of Theorem~\ref{contactlabel37}}]
    In (\ref{contactlabel5}), we observed that
    \begin{align*}
        \dim (\Hcal_{\dolb_b,m}^{p,q} (G/H)) = \dim (H^{p,q}_{\dolb}(G/K,\Lcal^{m+p-q}))
    \end{align*}
    and on this space the corresponding eigenvalue of $-\Lcal_T^2$ is precisely $\left(\frac{m+p-q}{2n}\right)^2$, as shown in (\ref{contactlabel6}).\\
    For the zeta function, (\ref{contactlabel35}) together with the results from \cite{rumin2026} then yields
    \begin{align*}
    Z(s) =& A + \sum\limits_{m\in\mathbb{Z}} \sum\limits_{\substack{p,q\geq 0\\ m+p-q\in N\cdot\Z\setminus \lbrace 0\rbrace}} (-1)^{p+q} \dim (H^{p,q}_{\dolb}(G/K,\Lcal^{m+p-q})) \frac{1}{\left(\frac{m+p-q}{2n}\right)^{2s}}\\
    =& A + (2n)^{2s} \sum\limits_{m\in\mathbb{Z}} \sum\limits_{\substack{p,q\geq 0\\ m+p-q\in N\cdot\Z\setminus \lbrace 0\rbrace}} (-1)^{p+q} \dim (H^{p,q}_{\dolb}(G/K,\Lcal^{m+p-q})) \frac{1}{(m+p-q)^{2s}}\\
    =& A + (2n)^{2s} \sum\limits_{j\in\mathbb{Z}\setminus \lbrace 0\rbrace} \sum_{p,q\geq 0} (-1)^{p+q} \dim (H^{p,q}_{\dolb}(G/K,\Lcal^{N\cdot j})) \frac{1}{(N\cdot j)^{2s}}\\
    =& A + \left(2\frac{n}{N}\right)^{2s} \sum\limits_{j\in\mathbb{Z}\setminus \lbrace 0\rbrace} \frac{1}{j^{2s}} \sum_{p,q\geq 0} (-1)^{p+q} \dim (H^{p,q}_{\dolb}(G/K,\Lcal^{N\cdot j}))\\
    =& A + \left(2\frac{n}{N}\right)^{2s} \sum\limits_{j\in\mathbb{Z}\setminus \lbrace 0\rbrace} \frac{1}{j^{2s}} \sum_{p\geq 0} (-1)^{p} \chi_{\operatorname{hol}} (G/K,\Lambda^{p,0}T^\ast (G/K)\otimes \Lcal^{N\cdot j})
    \end{align*}
    with the holomorphic Euler characteristic $\chi_{\operatorname{hol}} (G/K,\Lambda^{p,0}T^\ast (G/K)\otimes \Lcal^{N\cdot j})$. The Hirzebruch-Riemann-Roch theorem from \cite[Theorem~3]{AtiyahSingerIndexTheorem} allows us to study these terms with ease. The Hirzebruch-Riemann-Roch theorem, together with $\operatorname{ch} (E\otimes F) = \operatorname{ch} (E)\operatorname{ch} (F)$, states
\begin{align*}
    \chi_{\operatorname{hol}} (G/K,\Lambda^{p,0}T^\ast (G/K)\otimes \Lcal^{N\cdot j})=& \int_{G/K} \operatorname{ch} (\Lambda^{p,0}T^\ast (G/K)\otimes \Lcal^{N\cdot j}) \operatorname{Td} (G/K)\\
    =& \int_{G/K} \operatorname{ch} (\Lambda^{p,0}T^\ast (G/K)) \operatorname{ch} (\Lcal^{N\cdot j}) \operatorname{Td} (G/K),
\end{align*}
and thus
\begin{align*}
    &\sum_{p\geq 0} (-1)^{p} \chi_{\operatorname{hol}} (G/K,\Lambda^{p,0}T^\ast (G/K)\otimes \Lcal^{N\cdot j})\\
    =& \sum_{p\geq 0} (-1)^{p} \int_{G/K} \operatorname{ch} (\Lambda^{p,0}T^\ast (G/K)) \operatorname{ch} (\Lcal^{N\cdot j})\operatorname{Td} (G/K)\\
    =& \int_{G/K} (\sum_{p\geq 0} (-1)^{p}\operatorname{ch} (\Lambda^{p,0}T^\ast (G/K))) \operatorname{Td} (G/K) \operatorname{ch} (\Lcal^{N\cdot j})=: (\star).
\end{align*}
According to \cite[Example 3.2.5]{FultonIntersectionTheory}, we have
\begin{align*}
    \left(\sum_{p\geq 0} (-1)^{p}\operatorname{ch} (\Lambda^{p,0}T^\ast (G/K))\right)\operatorname{Td} (G/K) = c_n(T^{1,0} (G/K)),
\end{align*}
which is a $2n$-form. Therefore, only the term in degree $0$ in the Chern character $\operatorname{ch} (\Lcal^{N\cdot j}) = \operatorname{rk} (\Lcal^{N\cdot j}) + (\text{higher degrees}) = 1 + (\text{higher degrees})$ is relevant for the calculation of the integral, so it follows, together with the Chern-Gauß-Bonnet theorem \cite{griffiths1994principles}, that
\begin{align*}
    (\star) =& \int_{G/K} c_n(T^{1,0} (G/K)) = \chi(G/K)
\end{align*}
with the Euler characteristic $\chi(G/K)$.\\
Similarly, if we consider the trivial complex line bundle instead of $\Lcal^{N\cdot j}$, we obtain $A=\chi(G/K)$, as we have already seen in (\ref{contactlabel36}).\\
Overall, we therefore obtain
\begin{align*}
    Z(s) =& \chi(G/K) + \left(2\frac{n}{N}\right)^{2s} \sum\limits_{j\in\mathbb{Z}\setminus \lbrace 0\rbrace} \frac{1}{j^{2s}} \chi(G/K)\\
    =& \chi(G/K)+\chi(G/K) \left(2\frac{n}{N}\right)^{2s} \sum\limits_{j\in\mathbb{Z}\setminus \lbrace 0\rbrace} \frac{1}{j^{2s}}\\
    =& \chi(G/K)\left(1+ \left(2\frac{n}{N}\right)^{2s} 2\zeta_\mathrm{R} (2s)\right)
\end{align*}
for the Riemann zeta function $\zeta_\mathrm{R}$. In \cite{WangHomogeneousEulerChar}, it was proven that $\chi(G/K) = \frac{\vert W_G\vert}{\vert W_K\vert} > 0$ for the Weyl groups $W_G$ and $W_K$. Since
\begin{align*}
    \zeta_\mathrm{R}(0) =& -\frac{1}{2},\\
    \frac{\partial}{\partial s}_{\vert s=0} \zeta_\mathrm{R}(2s) =& 2\zeta_\mathrm{R}'(0) = 2\left(-\frac{1}{2}\ln (2\pi)\right)=-\ln (2\pi),\\
    \frac{\partial}{\partial s}_{\vert s=0} \left(2\frac{n}{N}\right)^{2s} =& 2\ln \left(2\frac{n}{N}\right),
\end{align*}
as shown in \cite[Chapter 13]{WhittakerWatson}, we have
\begin{align*}
    Z(0)=&0,\\
    Z'(0) =& 2\chi(G/K) \left(2\ln\left(2\frac{n}{N}\right)\cdot \left(-\frac{1}{2}\right) -\ln (2\pi)\right)\\
    =& -2\chi(G/K) \left(\ln \left(2\frac{n}{N}\right)+\ln (2\pi)\right)= -2\chi(G/K) \ln\left(4\pi\frac{n}{N}\right),
\end{align*}
and for the contact torsion $T_\Ecal(G/H)$ we conclude
\begin{align*}
    T_\Ecal(G/H) =& -\frac{1}{2}Z'(0) = {\chi(G/K)} \cdot \ln\left(4\pi\frac{n}{N}\right)\\
    =& {\frac{\vert W_G\vert}{\vert W_K\vert}}\cdot \ln\left(4\pi\frac{n}{N}\right).
\end{align*}
\end{proof}
\begin{remark}
    As we shall see in the next Example~\ref{contactlabel39}, $N$ is not always equal to $n$.
\end{remark}
Let the base manifold $G/K = G_1/K_1 \times G_2/K_2$ be given as a product of spaces as in Section~\ref{contactsect8.1}, where $G_1$ and $G_2$ are not necessarily simple Lie groups, but may also be semisimple, so that $G_1/K_1$ and $G_2/K_2$ can be further decomposed into products. This is always possible, since $G/K$ is simply connected.\\
As before, we also assume that there exist Lie subgroups $H\subset K$ and $H_j\subset K_j$ such that $G/H$ and $G_j/H_j$ are compact CR symmetric contact manifolds. Let $2n:=\dim(G/K)$ and $2n_j:=\dim (G_j/K_j)$, i.e.\ in particular $n=n_1+n_2$. We denote by $N$ and $N_j$ the orders of $\Gamma$ and $\Gamma_j$ respectively.\\
In \cite[Theorem~3.3]{RaySingerTorsionComplexManifolds} and \cite[Lemma 2]{kkhermsymm}, one can find product formulas for the holomorphic torsion of a product $M_1\times M_2$ of two compact K\"ahler manifolds $M_1$ and $M_2$. Although the product of two contact manifolds is no longer a contact manifold, we can nevertheless obtain a kind of product formula by using the decomposition of $G/K$, so that we can express the contact torsion $T_\Ecal(G/H)$ in terms of the contact torsions $T_\Ecal(G_j/H_j)$.
\begin{lemma}
    The contact torsion of $G/H$ is given by
    \begin{align*}
        T_\Ecal(G/H) =& \frac{1}{2}\left( \chi(G_2/K_2)T_\Ecal(G_1/H_1) + \chi(G_1/K_1)T_\Ecal(G_2/H_2) \right)\\
        &+ \frac{\chi(G/K)}{2}\left( \ln\left(\frac{N_1\cdot N_2}{N^2}\right) + 2\ln\left(\sqrt{\frac{n_1}{n_2}}+\sqrt{\frac{n_2}{n_1}}\right)\right).
    \end{align*}
\end{lemma}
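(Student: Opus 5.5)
The plan is to derive the product formula directly from Theorem~\ref{contactlabel37}, applied three times. By hypothesis, $G/H$, $G_1/H_1$ and $G_2/H_2$ are all compact CR symmetric contact manifolds as in Section~\ref{contactsect8.1}, with base manifolds of complex dimensions $n$, $n_1$, $n_2$ and with $N$, $N_1$, $N_2$ the orders of the corresponding groups $\Gamma$, $\Gamma_1$, $\Gamma_2$. Theorem~\ref{contactlabel37} therefore gives
\begin{align*}
T_\Ecal(G/H) = \chi(G/K)\ln\left(4\pi\frac{n}{N}\right), \qquad T_\Ecal(G_j/H_j) = \chi(G_j/K_j)\ln\left(4\pi\frac{n_j}{N_j}\right).
\end{align*}

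The second ingredient is multiplicativity of the Euler characteristic: $\chi(G/K)=\chi(G_1/K_1)\,\chi(G_2/K_2)$. This follows from the Künneth formula for the product $G/K=G_1/K_1\times G_2/K_2$. Alternatively, it follows from the Weyl group formula $\chi(G/K)=\vert W_G\vert/\vert W_K\vert$ quoted in the proof of Theorem~\ref{contactlabel37}, since $G=G_1\times G_2$ and $K=K_1\times K_2$ give $W_G=W_{G_1}\times W_{G_2}$ and $W_K=W_{K_1}\times W_{K_2}$. With this, the first bracket on the right-hand side of the statement becomes
\begin{align*}
\frac{1}{2}\bigl(\chi_2\chi_1\ln(4\pi n_1/N_1)+\chi_1\chi_2\ln(4\pi n_2/N_2)\bigr) = \frac{\chi(G/K)}{2}\ln\left(\frac{16\pi^2 n_1n_2}{N_1N_2}\right),
\end{align*}
where $\chi_j:=\chi(G_j/K_j)$.

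The last step is pure logarithm algebra. We have $\sqrt{n_1/n_2}+\sqrt{n_2/n_1}=(n_1+n_2)/\sqrt{n_1n_2}=n/\sqrt{n_1n_2}$, and $n=n_1+n_2$ holds because $\dim\frp=\dim\frp_1+\dim\frp_2$. Hence the correction term equals $\frac{\chi(G/K)}{2}\ln\bigl(\frac{N_1N_2}{N^2}\cdot\frac{n^2}{n_1n_2}\bigr)$. Adding this to the previous display, the factors $n_1n_2$ and $N_1N_2$ cancel, leaving
\begin{align*}
\frac{\chi(G/K)}{2}\ln\left(\frac{16\pi^2n^2}{N^2}\right)=\chi(G/K)\ln\left(4\pi\frac{n}{N}\right),
\end{align*}
which is $T_\Ecal(G/H)$ by Theorem~\ref{contactlabel37}.

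There is no real analytic obstacle. The only point needing care is that the quantities $n$ and $N$ in Theorem~\ref{contactlabel37} for $G/H$ are the ones attached to the product structure, namely $\xi=\xi_1+\xi_2$ and $\dim\frp=2n$. In particular, $N$ is in general not determined by $N_1$ and $N_2$, which is exactly why it appears separately in the correction term. Since the statement assumes that $G/H$ is itself a compact CR symmetric contact manifold with these invariants, Theorem~\ref{contactlabel37} applies directly.
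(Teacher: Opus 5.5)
Your proposal is correct and follows essentially the same route as the paper: apply Theorem~\ref{contactlabel37} to $G/H$, $G_1/H_1$ and $G_2/H_2$, use $\chi(G/K)=\chi(G_1/K_1)\chi(G_2/K_2)$, and finish with the identity $n_1+n_2=\sqrt{n_1n_2}\bigl(\sqrt{n_1/n_2}+\sqrt{n_2/n_1}\bigr)$. The only difference is direction: the paper splits $\ln\left(4\pi\frac{n_1+n_2}{N}\right)$ starting from the left-hand side, whereas you combine the right-hand side back into $\chi(G/K)\ln\left(4\pi\frac{n}{N}\right)$.
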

\begin{proof}
    According to Theorem~\ref{contactlabel37}, we have
    \begin{align*}
        T_\Ecal(G/H) =&  {\chi (G/K)}\cdot \ln\left(4\pi \frac{n}{N}\right) = {\chi (G/K)}\cdot \ln\left(4\pi \frac{n_1+n_2}{N}\right),\\
        T_\Ecal(G_j/H_j) =&  {\chi (G_j/K_j)}\cdot \ln\left(4\pi \frac{n_j}{N_j}\right).
    \end{align*}
    Since $a+b = \sqrt{a\cdot b}\left(\sqrt{\frac{a}{b}}+\sqrt{\frac{b}{a}}\right)$ for $a,b>0$, it follows that
    \begin{align*}
        \ln\left(4\pi \frac{n_1+n_2}{N}\right) = \frac{1}{2}\ln \left(4\pi \frac{n_1}{N}\right) + \frac{1}{2}\ln\left(4\pi \frac{n_2}{N}\right)+\ln\left(\sqrt{\frac{n_1}{n_2}} + \sqrt{\frac{n_2}{n_1}}\right)
    \end{align*}
    for $a=4\pi \frac{n_1}{N}$, $b=4\pi \frac{n_2}{N}$. Since
    \begin{align*}
        \ln \left(4\pi \frac{n_j}{N}\right) = \ln \left(4\pi \frac{n_j}{N_j}\cdot \frac{N_j}{N}\right) = \ln \left(4\pi \frac{n_j}{N_j}\right) + \ln \left(\frac{N_j}{N}\right)
    \end{align*}
    and $\ln \left(\frac{N_1}{N}\right)+\ln \left(\frac{N_2}{N}\right) = \ln \left(\frac{N_1\cdot N_2}{N^2}\right)$, we obtain the desired formula by also using the following formula for the Euler characteristic
    \begin{align*}
        \chi(G/K) = \chi (G_1/K_1\times G_2/K_2) = \chi(G_1/K_1)\cdot\chi(G_2/K_2).
    \end{align*}

\end{proof}
\begin{example}
\label{contactlabel39}
    \begin{enumerate}
        \item For the sphere 
        \begin{align*}
            G/H = S^{2n+1} = \dfrac{\SU(n+1)}{\SU (n)},
        \end{align*}
        we have
        \begin{align*}
            G/K = \mathbf{P}^n\C = \dfrac{\SU(n+1)}{\operatorname{S}(\operatorname{U} (n)\times \operatorname{U} (1))},
        \end{align*}
        where
        \begin{align*}
            K = \operatorname{S}(\operatorname{U} (n)\times \operatorname{U} (1)) = \left\{ \begin{pmatrix}
                A&0\\0&z
            \end{pmatrix} \mid A\in \operatorname{U} (n), z\in \operatorname{U} (1)=S^1, \det(A)z=1     
            \right\}.
        \end{align*}
        Thus, the order of $\Gamma$ is $N=n$ and we have $\chi(\mathbf {P}^n\C)=n+1$. Consequently, the torsion is $T_\Ecal(S^{2n+1})={(n+1)}\cdot \ln(4\pi)$, which Kitaoka had already determined in \cite[Theorem~1.2]{kitaokaspheres}.
        \item For
        \begin{align*}
            G/H = \dfrac{\spn{n}}{\SU (n)},
        \end{align*}
        we have
        \begin{align*}
            G/K = \dfrac{\spn{n}}{\operatorname{U} (n)}.
        \end{align*}
        Similar to the previous example, the order of $\Gamma$ is $n$ and $\dim (G/H) = (2n^2+n)-(n^2-1)=n^2+n+1=n(n+1)+1$. Due to $\vert W(\spn{n})\vert = 2^n\cdot n!$ and $\vert W(\operatorname{U}(n))\vert = n!$, we obtain
        \begin{align*}
            T_\Ecal\left(\dfrac{\spn{n}}{\SU (n)}\right)={2^n}\cdot \ln\left(4\pi \frac{n(n+1)}{2n}\right)={2^n}\cdot \ln\left(2\pi (n+1)\right).
        \end{align*}
        The Weyl group and further information on these Lie groups can be found in \cite[Section~V.6]{broetomdieck} and \cite[Chapter~6]{sepanski}.
    \end{enumerate}
    
\end{example}
\subsection{Equivariant Contact Torsion}
\label{contactsect8.4}
Let $G/H$ again be a compact CR symmetric contact space as in Section~\ref{contactsect8.1}.\\
Since $G$ acts on $G/H$ via contactomorphisms that preserve the contact form and metric, we can also consider the equivariant contact torsion for any $g \in G$. First, let us recall the definition of the equivariant contact torsion, see \cite{tessmer} for further details.\\

In \cite{tessmer}, it was already shown that the action of $g\in G$, by pullback on differential forms, yields an action on the spaces of the Rumin complex and the action commutes with the operators that appear in the Rumin complex, as well as with their adjoints and the Hodge $\ast$ operator, thus,
\begin{align*}
    [g,d_\Ecal]=[g,d_\Ecal^\ast]=[g,D]=[g,D^\ast]=[g,\ast]=0.
\end{align*}
Therefore,
\begin{align*}
    [g,\Delta_\Ecal]=0,
\end{align*}
which is why the action of $g\in G$ preserves the eigenspaces of $\Delta_\Ecal$.\\
Since the contact form and the complex structure are $G$-invariant, the action of $g\in G$ also commutes with $L, d_\Dcal, \dolb_\Dcal$ and $\partial_\Dcal$, i.e.
\begin{align*}
    [g,d_\Dcal]=[g,\partial_\Dcal]=[g,\dolb_\Dcal]=[g,L]=0.
\end{align*}
For $g\in G$, the equivariant zeta function is defined for $\operatorname{Re} (s)\gg 1$ by
\begin{align*}
    \zeta(\Delta_\Ecal\vert \Ecal^k(M),g) := \Tr\left(g_{\vert H_\Ecal^k(M)}\right) + \sum\limits_{\lambda\in\operatorname{spec}^\ast (\Delta_\Ecal\vert \Ecal^k(M))} \Tr\left(g_{\vert \operatorname{Eig}_\lambda(\Delta_\Ecal\vert\Ecal^k(M))}\right) \frac{1}{\lambda^s},
\end{align*}
where the action of $g\in G$ is defined by $g\cdot \alpha = (L_{g^{-1}})^\ast\alpha$ for $\alpha\in \Ecal^k(M)$, such that the action on the differential forms remains consistent with the action used previously.\\
As in the non-equivariant case, the zeta function can be extended to a meromorphic function on $\C$ with the help of the Mellin transform, which is holomorphic in $s=0$, see \cite[Chapter~5]{tessmer}. We have
\begin{align*}
    \zeta(\Delta_\Ecal\vert \Ecal^k(M),g)(s) := \Tr\left(g_{\vert H_\Ecal^k(M)}\right) + \frac{1}{\Gamma (s)}\int\limits_0^{+\infty} \Tr^\ast(g\circ e^{-t\Delta_\Ecal}\vert \Ecal^k(M)) t^{s-1}dt,
\end{align*}
The \textbf{equivariant contact zeta function} of the Rumin complex if then defined as
\begin{align*}
    Z_g(s) := \frac{1}{2} \sum\limits_{k=0}^{2n+1} (-1)^k \omega(k) \zeta(\Delta_\Ecal\vert \Ecal^k(M),g)(s),
\end{align*}
where 
\begin{align*}
    \omega(k):=\begin{cases}
    k &\text{ for } k\leq n,\\
    k+1 &\text{ for } k\geq n+1.
\end{cases}
\end{align*}
As in the non-equivariant case, the zeta function can also be expressed as follows
\begin{align*}
    Z_g(s)=\sum\limits_{k=0}^n (-1)^k (n+1-k) \zeta(\Delta_\Ecal\vert \Ecal^k(M),g)(s).
\end{align*}
The \textbf{analytic equivariant (contact) torsion} of the Rumin complex is defined by
\begin{align*}
    T_\Ecal (M,g) := {-\frac{1}{2}Z'_g(0)}.
\end{align*}
For $g=e$, we have $T_\Ecal (M,e)=T_\Ecal (M)$.
\begin{remark}
    As with non-equivariant torsion, we omit the exponential function here, contrary to Rumin's original definition.
\end{remark}
The result from \cite{rumin2026}, which we summarised in Section~\ref{contactsect7.2}, can be extended to the equivariant case due the $G$-invariance of the relevant operators, by considering
\begin{align*}
    \vartheta_g (t) := \sum\limits_{k=0}^n (-1)^k (n+1-k)\Tr(g\circ e^{-t\Delta_\Ecal}\vert \Ecal^k(M))
\end{align*}
instead of
\begin{align*}
    \vartheta (t) := \sum\limits_{k=0}^n (-1)^k (n+1-k)\Tr(e^{-t\Delta_\Ecal}\vert \Ecal^k(M)).
\end{align*}
Analogous to (\ref{contactlabel35}), we obtain
\begin{align*}
    Z_g(s) =& \sum\limits_{k=0}^n (-1)^k(n+1-k)\zeta(\Delta_\Ecal\vert \Ecal^k(M),g)(s) \\
    =& \sum\limits_{k=0}^{2n} (-1)^k \zeta (-\Lcal_T^2\vert \kernel (\Delta_{\dolb_\Dcal})\cap\frA^k_\C(\Dcal),g)(s),
\end{align*}
where 
\begin{align*}
    &\zeta (-\Lcal_T^2\vert \kernel (\Delta_{\dolb_\Dcal})\cap\frA^k_\C(\Dcal),g)(s) \\
    := &\Tr \left(g_{\vert \kernel(-\Lcal_T^2)\cap \kernel (\Delta_{\dolb_\Dcal})\cap\frA^k_\C(\Dcal)}\right)\\
    &+ \sum\limits_{\lambda\in\operatorname{spec}^\ast (-\Lcal_T^2\vert \kernel (\Delta_{\dolb_\Dcal})\cap\frA^k_\C(\Dcal))} \Tr\left(g_{\vert \operatorname{Eig}_\lambda(-\Lcal_T^2\vert\kernel (\Delta_{\dolb_\Dcal})\cap\frA^k_\C(\Dcal))}\right)\frac{1}{\lambda^s}.
\end{align*}
For $\alpha\in C^\infty(G,\Lambda^{p,q}\frp^\ast)^H_m$ and $x\in G$, we have
\begin{align*}
    (g\cdot \alpha)_{\vert x\exp_G(t\xi)}=\alpha_{\vert g^{-1}x\exp_G(t\xi)}=e^{-m\cdot it}\alpha_{\vert g^{-1}x},
\end{align*}
thus, the action of $g\in G$ maps $\Hcal_{\dolb_b,m}^{p,q} (G/H)$ onto itself.\\
The isomorphism of $\Hcal_{\dolb_b,m}^{p,q} (G/H)$ and $H^{p,q}_{\dolb}(G/K,\Lcal^{m+p-q})$ is $G$-equivariant, since the complex structure and metric on $G/K$ and $\Lcal^{m+p-q}=G\times_K L^{m+p-q}$  are also $G$-invariant. Analogously to Theorem~\ref{contactlabel37}, we obtain the following lemma:
\begin{lemma}
    For a compact CR symmetric contact space as in Section~\ref{contactsect8.1}, the equivariant zeta function with respect to the action of $g\in G$ is given by
    \begin{align}
        Z_g(s) =& \sum_{p,q\geq 0} (-1)^{p+q} \Tr \left( g_{\vert H_{\dolb}^{p,q}(G/K)} \right) \notag\\
        &+ \left(2\frac{n}{N}\right)^{2s} \sum\limits_{j\in\mathbb{Z}\setminus \lbrace 0\rbrace} \frac{1}{j^{2s}} \sum_{p,q\geq 0} (-1)^{p+q} \Tr \left( g_{\vert H^{p,q}_{\dolb}(G/K,\Lcal^{N\cdot j})} \right). \label{contactlabel40}
    \end{align}
\end{lemma}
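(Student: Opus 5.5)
The plan is to follow the proof of Theorem~\ref{contactlabel37} line by line, with every dimension replaced by the trace of $g$. The starting point is the equivariant analogue of (\ref{contactlabel35}) stated just before the lemma:
\begin{align*}
Z_g(s)=\sum_{k=0}^{2n}(-1)^k\zeta(-\Lcal_T^2\vert\kernel(\Delta_{\dolb_\Dcal})\cap\frA^k_\C(\Dcal),g)(s).
\end{align*}
Since $g$ acts from the left and the $S^1$-action is from the right, $g$ commutes with $\Lcal_T$ and preserves the weight decomposition (\ref{contactlabel34}). Each $\kernel(\Delta_{\dolb_\Dcal})\cap\frA^{p,q}(\Dcal)$ therefore splits $g$-equivariantly into the spaces $\Hcal^{p,q}_{\dolb_\Dcal,m}(G/H)$, and by (\ref{contactlabel6}) $-\Lcal_T^2$ acts on each of them as the scalar $\left(\frac{m+p-q}{2n}\right)^2$. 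Consequently, the trace of $g$ on an eigenspace of $-\Lcal_T^2$ of eigenvalue $\lambda$ is the sum of the traces of $g$ on the spaces $\Hcal^{p,q}_{\dolb_\Dcal,m}$ with $\left(\frac{m+p-q}{2n}\right)^2=\lambda$. These spaces are finite-dimensional by (\ref{contactlabel5}), so the traces make sense.

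Next I would use the $G$-equivariance of the isomorphism $\Hcal^{p,q}_{\dolb_\Dcal,m}(G/H)\cong H^{p,q}_{\dolb}(G/K,\Lcal^{m+p-q})$, which was noted just before the lemma. This lets every trace be computed on the Dolbeault cohomology of $G/K$. For the constant term, take $m+p-q=0$. Then $\Lcal^0$ is the trivial bundle, since the representation $L^0$ of $K$ is trivial, and we obtain $\sum_{p,q}(-1)^{p+q}\Tr(g\vert H^{p,q}_{\dolb}(G/K))$. One should check that the trivialisation by the $K$-invariant vector $v$ intertwines the $G$-actions. This is clear, because $\mu(\alpha)=\alpha\otimes v$ and $G$ acts only on the argument $x$.

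For the nonconstant part I would reindex exactly as in Theorem~\ref{contactlabel37}. The lemma on weights shows that only $m+p-q\in N\cdot\Z$ contributes, so write $m+p-q=Nj$ with $j\neq 0$. For fixed $(p,q,j)$ the weight $m=Nj-p+q$ is uniquely determined, which makes the map $(m,p,q)\mapsto(j,p,q)$ a bijection. Pulling out $(2n)^{2s}(Nj)^{-2s}=\left(2\frac{n}{N}\right)^{2s}j^{-2s}$ then gives (\ref{contactlabel40}).

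The main obstacle is justifying that these rearrangements are valid for $\operatorname{Re}(s)\gg1$. Each of the two series has to converge absolutely, so that the sum over eigenvalues can be reorganised as a sum over $(m,p,q)$ and then over $(j,p,q)$. For this I would bound $\vert\Tr(g\vert V)\vert\le\dim V$, because $g$ acts unitarily: the $L^2$-metric and the $\psi$-identification are $G$-invariant by (\ref{contactlabel25}). This reduces absolute convergence to that of the non-equivariant series in Theorem~\ref{contactlabel37}. I would control that series using the fact that $\dim H^{p,q}_{\dolb}(G/K,\Lcal^{Nj})$ grows at most polynomially in $j$, for instance via Kodaira vanishing and Riemann--Roch, or directly from the convergence of the heat traces in Rumin's result. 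Once absolute convergence holds for large $\operatorname{Re}(s)$, the identity extends to all $s$ by meromorphic continuation.
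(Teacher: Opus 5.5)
Your proposal is correct and follows the same route as the paper. The paper obtains the lemma ``analogously to Theorem~\ref{contactlabel37}'' from three ingredients: the equivariant version of (\ref{contactlabel35}), the $g$-invariance of the weight spaces $\Hcal^{p,q}_{\dolb_\Dcal,m}(G/H)$, and the $G$-equivariance of their isomorphism with $H^{p,q}_{\dolb}(G/K,\Lcal^{m+p-q})$, after which it reindexes by $m+p-q=Nj$. Your absolute-convergence argument, using $\lvert\Tr(g\vert V)\rvert\le\dim V$ and the polynomial growth of the cohomology dimensions in $j$, supplies a justification for the rearrangement that the paper leaves implicit.
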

The action of $g\in G$ on $\Lcal^{m+p-q}$ is defined by
\begin{align*}
    g\cdot [x,v]=[gx,v].
\end{align*}
We would now like to examine the term
\begin{align*}
    \sum_{p,q\geq 0} (-1)^{p+q} \Tr \left( g_{\vert H^{p,q}_{\dolb}(G/K,\Lcal^{N\cdot j})} \right)
\end{align*}
from (\ref{contactlabel40}) in more detail. For some arbitrary $g\in G$, an approach involving the Atiyah-Segal-Singer index formula \cite[Theorem~13]{kkhermsymm}
\begin{align*}
    \sum\limits_{q\geq 0} (-1)^{q} \Tr \left( g_{\vert H^{p,q}_{\dolb}(G/K,\Lcal^{N\cdot j})} \right) = \int_{(G/K)_g} \operatorname{Td}_g (G/K)\operatorname{ch}_g (\Lambda^{p,0}T^\ast (G/K)\otimes \Lcal^{N\cdot j})
\end{align*}
might be possible, where $(G/K)_g$ denotes the fixed point submanifold and $\operatorname{Td}_g$, $\operatorname{ch}_g$ are the equivariant versions of $\operatorname{Td}$, $\operatorname{ch}$. However, in this case there is no such simplification of the equivariant characteristic classes as before, so that this term continues to depend on $j$ and does not simplify as drastically as in the non-equivariant case.\\
For elements $t=\exp_G(X)\in T\subset K\subset G$ where $X\in\frt$ of the maximal torus, which generate the torus, another possible approach would be to use the Atiyah-Bott fixed point theorem for the calculation. Here, $\frt\subset\frk$ is the Lie algebra of $T$.
\begin{theorem}[{\cite[Theorem~4.12]{atiyahbott2}}, Atiyah-Bott fixed point theorem for complex manifolds]
    Let $N$ be a compact complex manifold with $\dim_\R (N)=2n$ and $F\rightarrow N$ a holomorphic vector bundle. For a holomorphic map $\gamma\colon N\rightarrow N$ with $\det(1-T_x\gamma)\neq 0$ for all $x\in N^\gamma$ and a holomorphic vector bundle homomorphism $\psi^F \colon \gamma^\ast F\rightarrow F$, which provide an endomorphism of the Dolbeault complex
    \begin{align*}
        0\rightarrow\frA^{p,0}(N,F)\xrightarrow{\dolb^F}\frA^{p,1}(N,F)\xrightarrow{\dolb^F}\ldots \xrightarrow{\dolb^F}\frA^{p,n}(N,F)\rightarrow 0,
    \end{align*}
    i.e.\ $\psi_q=\gamma^\ast\otimes \psi^F$ for the pullback of forms $\gamma^\ast\colon \gamma^\ast(\Lambda^{p,q}T^\ast N)\rightarrow \Lambda^{p,q}T^\ast N$, we have
    \begin{align*}
        \sum\limits_{q=0}^n (-1)^q \Tr \left(\tau_{\vert H_{\dolb}^{p,q}(N,F)}\right)
        = \sum\limits_{x\in N^\gamma} \frac{\Tr (\gamma^\ast_{\vert \Lambda^{p,0}T^\ast N,x})\cdot \Tr(\psi^F_{\vert x})}{ \det\nolimits_\C (1-T^{1,0}_x \gamma)}.
    \end{align*}
\end{theorem}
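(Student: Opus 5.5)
The plan is to prove the formula by the heat-kernel (McKean--Singer) method for the elliptic complex
\begin{align*}
    0\rightarrow\frA^{p,0}(N,F)\xrightarrow{\dolb^F}\ldots\xrightarrow{\dolb^F}\frA^{p,n}(N,F)\rightarrow 0
\end{align*}
with the geometric endomorphism $\psi_\bullet=\gamma^\ast\otimes\psi^F$ (written $\tau$ on cohomology in the statement). Fix Hermitian metrics on $N$ and $F$, and let $\Delta_q$ be the Kodaira Laplacian on $\frA^{p,q}(N,F)$. The expected main obstacle is the local computation in the third step below.

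\textbf{First step: $t$-independence of the supertrace.} Set
\begin{align*}
    L(t):=\sum_{q=0}^n(-1)^q\Tr\left(\psi_q\circ e^{-t\Delta_q}\right).
\end{align*}
The goal is to show that $L(t)$ is independent of $t>0$ and that its limit as $t\to+\infty$ is the left-hand side $\sum_q(-1)^q\Tr(\tau_{\vert H^{p,q}_{\dolb}(N,F)})$.

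Since $\psi_\bullet$ commutes with $\dolb^F$ but not necessarily with $(\dolb^F)^\ast$, I will not argue with eigenspaces directly. Instead I will differentiate:
\begin{align*}
    \frac{d}{dt}L(t)=-\sum_q(-1)^q\Tr\left(\psi_q(\dolb\dolb^\ast+\dolb^\ast\dolb)e^{-t\Delta_q}\right).
\end{align*}
I then use $\dolb e^{-t\Delta}=e^{-t\Delta}\dolb$, the relation $\psi\dolb=\dolb\psi$, and cyclicity of the trace for smoothing operators. The terms $\Tr(\psi_q\dolb\dolb^\ast e^{-t\Delta})$ and $\Tr(\psi_{q-1}\dolb^\ast e^{-t\Delta}\dolb)$ then cancel pairwise with opposite signs. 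For $t\to+\infty$, $e^{-t\Delta}$ converges to the harmonic projection $P$. Since $\psi\circ P$ followed by $P$ represents the induced map on $\mathcal{H}^{p,q}\cong H^{p,q}_{\dolb}(N,F)$ by \cite[Theorem~5.24]{Voisin}, the limit is the left-hand side.

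\textbf{Second step: localisation as $t\searrow 0$.} Write $\psi_q e^{-t\Delta_q}$ via the heat kernel $k_t^q(x,y)$:
\begin{align*}
    \Tr(\psi_q e^{-t\Delta_q})=\int_N \tr\left(\psi^{F}_x\otimes(\gamma^\ast)_x\circ k^q_t(\gamma(x),x)\right)\operatorname{dvol}(x).
\end{align*}
Gaussian off-diagonal decay $|k_t(y,x)|\le Ct^{-n}e^{-c\,d(x,y)^2/t}$ shows that, as $t\to 0$, only neighbourhoods of the fixed points contribute. The fixed points are isolated because $\det(1-T_x\gamma)\neq 0$.

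\textbf{Third step: the local contribution (main obstacle).} Near a fixed point $x$, choose holomorphic coordinates and replace $k_t$ by its leading Euclidean term
\begin{align*}
    (4\pi t)^{-n}e^{-|y-x|^2/4t}\Id.
\end{align*}
The error is $O(t^{1/2})$ relative, which must be checked uniformly. Substituting $y=\gamma(z)\approx x+A z$ with $A=T_x\gamma$ turns the integral into the Gaussian integral
\begin{align*}
    \int_{\R^{2n}}(4\pi t)^{-n}e^{-|(A-1)z|^2/4t}\,dz=|\det\nolimits_\R(1-A)|^{-1}.
\end{align*}
The fibrewise supertrace is $\Tr(\psi^F_x)\,\Tr(\gamma^\ast_{\vert\Lambda^{p,0}})\sum_q(-1)^q\Tr(\gamma^\ast_{\vert\Lambda^{0,q}})$. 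Here $\gamma^\ast$ acts on $\Lambda^{0,1}$ through $\overline{A^{1,0}}^{\,t}$, so the last sum equals $\det\nolimits_\C(1-\overline{A^{1,0}})$.

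Since $\gamma$ is holomorphic, $|\det\nolimits_\R(1-A)|=|\det\nolimits_\C(1-A^{1,0})|^2$. The quotient therefore simplifies to $1/\det\nolimits_\C(1-T^{1,0}_x\gamma)$, which gives the stated sum over $N^\gamma$. The delicate point is justifying that the subleading terms of the heat expansion integrate to $o(1)$ near each nondegenerate fixed point. I would first try to get this from Duhamel's formula together with the standard parametrix estimates.
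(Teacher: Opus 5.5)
Your argument is correct, but there is no proof in the paper to match it against: the paper imports the result from Atiyah--Bott \cite[Theorem~4.12]{atiyahbott2}. There it is a special case of the Lefschetz formula for general elliptic complexes, proved with a parametrix $P$ of the complex rather than with the heat semigroup. With $S=1-(P\dolb+\dolb P)$ smoothing, the Lefschetz number equals $\sum_q(-1)^q\Tr(\psi_q S_q)$. Transversality of the graph of $\gamma$ to the diagonal then localises this at the fixed points, with contributions $\sum_q(-1)^q\Tr(\psi_{q,x})/\lvert\det(1-T_x\gamma)\rvert$. The holomorphic version follows from exactly the linear algebra of your third step: $\sum_q(-1)^q\Tr(\gamma^\ast_{\vert\Lambda^{0,q}})=\overline{\det\nolimits_\C(1-T^{1,0}_x\gamma)}$ and $\lvert\det\nolimits_\R(1-T_x\gamma)\rvert=\lvert\det\nolimits_\C(1-T^{1,0}_x\gamma)\rvert^2$.

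Your McKean--Singer route uses $e^{-t\Delta}$ as the smoothing approximation of the identity. Its advantages are that it fits the heat-trace and Mellin-transform framework used in the rest of the paper, and that each fixed-point contribution becomes an explicit Gaussian integral. Its cost is the small-time heat kernel expansion with uniform remainder, which you rightly single out. That expansion is standard, and it also covers non-K\"ahler $N$, since $2\Delta_{\dolb}$ has scalar principal symbol $\lvert\xi\rvert^2$. The Atiyah--Bott argument, by contrast, needs no heat asymptotics and applies verbatim to any elliptic complex.

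One small slip: the Kodaira Laplacian has principal symbol $\tfrac12\lvert\xi\rvert^2$, so its leading kernel is $(2\pi t)^{-n}e^{-\lvert y-x\rvert^2/2t}$, not $(4\pi t)^{-n}e^{-\lvert y-x\rvert^2/4t}$. The Gaussian integral still equals $\lvert\det\nolimits_\R(1-A)\rvert^{-1}$, so your conclusion is unaffected.
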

Let $g=t\in T$ an element of the maximal torus $T$, which generates $T$, i.e.\ $\overline{\langle t\rangle_\Z}=T$. In our case, $\gamma = L_{t^{-1}}$ and $\psi^F$ is defined by $(gK,[t^{-1}g,v])\mapsto [g,v]$ with $F=\Lcal^{N\cdot j}=G\times_K L^{N\cdot j}$. The fixed points of the holomorphic map $L_{t^{-1}}\colon G/K\rightarrow G/K$ are always non-degenerate on $G/K$ and are given by $n_1K,\ldots,n_k K$ for $W_G/W_K=\lbrace (n_1T)W_K,\ldots,(n_kT)W_K\rbrace$ and $k=\vert W_G/W_K\vert$. Therefore, we obtain
\begin{align*}
    &\sum\limits_{p,q\geq 0} (-1)^{p+q} \Tr \left( t_{\vert H^{p,q}_{\dolb}(G/K,\Lcal^{N\cdot j})} \right) = \sum\limits_{p=0}^n (-1)^p\sum\limits_{q=0}^n (-1)^q \Tr \left( t_{\vert H^{p,q}_{\dolb}(G/K,\Lcal^{N\cdot j})} \right)\\
    =& \sum\limits_{p=0}^n (-1)^p \sum\limits_{\substack{[w] = (n_jT)W_K\\ \in W_G/W_K}} \frac{\Tr ({L_{t^{-1}}^\ast}_{\vert \Lambda^{p,0}T^\ast G/K,n_jK})\cdot \Tr(t_{\vert \Lcal^{N\cdot j},n_jK})}{ \det\nolimits_\C (1-T^{1,0}_{n_jK} L_{t^{-1}})} \\
    =& \sum\limits_{\substack{[w] = (n_jT)W_K\\ \in W_G/W_K}} \frac{\left(\sum\limits_{p=0}^n (-1)^p\Tr ({L_{t^{-1}}^\ast}_{\vert \Lambda^{p,0}T^\ast G/K,n_jK})\right)\cdot \Tr(t_{\vert \Lcal^{N\cdot j},n_jK})}{ \det\nolimits_\C (1-T^{1,0}_{n_jK} L_{t^{-1}})} \\
    =& \sum\limits_{\substack{[w] = (n_jT)W_K\\ \in W_G/W_K}} \frac{\det\nolimits_\C (1-(T^{1,0}_{n_jK} L_{t^{-1}})^\ast_{\vert (T^{1,0} G/K)^\ast})\cdot \Tr(t_{\vert \Lcal^{N\cdot j},n_jK})}{ \det\nolimits_\C (1-T^{1,0}_{n_jK} L_{t^{-1}})}\\
    =& \sum\limits_{\substack{[w] = (n_jT)W_K\\ \in W_G/W_K}} \frac{\det\nolimits_\C (1-(T^{1,0}_{n_jK} L_{t^{-1}})^\ast_{\vert (T^{1,0} G/K)^\ast})}{ \det\nolimits_\C (1-T^{1,0}_{n_jK} L_{t^{-1}})}\cdot \Tr(t_{\vert \Lcal^{N\cdot j},n_jK})\\
    =& \sum\limits_{\substack{[w] = (n_jT)W_K\\ \in W_G/W_K}} \Tr(t_{\vert \Lcal^{N\cdot j},n_jK}) = \sum\limits_{[w] \in W_G/W_K} e^{2\pi i \cdot j(w\cdot \lambda) (X)},
\end{align*}
where $\lambda$ is the weight of $L^N$ and $e^{2\pi i \cdot j(w\cdot \lambda) (X)}\neq 1$, since $t$ generates the maximal torus. Otherwise, $\lambda=0$, which is obviously not the case, since $L^N$ is a non-trivial representation.\\
Analogously, we obtain
\begin{align*}
    \sum_{p,q\geq 0} (-1)^{p+q} \Tr \left( t_{\vert H^{p,q}(G/K)} \right) = \frac{\vert W_G\vert}{\vert W_K\vert} = \chi(G/K).
\end{align*}
This yields
\begin{align*}
    Z_t(s) =& \frac{\vert W_G\vert}{\vert W_K\vert} + \left(2\frac{n}{N}\right)^{2s} \sum\limits_{j\in\mathbb{Z}\setminus \lbrace 0\rbrace} \frac{1}{j^{2s}} \sum\limits_{[w] \in W_G/W_K} e^{2\pi i \cdot j(w\cdot \lambda) (X)}\\
    =& \frac{\vert W_G\vert}{\vert W_K\vert} + \left(2\frac{n}{N}\right)^{2s}\sum\limits_{[w] \in W_G/W_K} \sum\limits_{j\in\mathbb{Z}\setminus \lbrace 0\rbrace} \frac{e^{2\pi i \cdot j(w\cdot \lambda) (X)}}{j^{2s}}\\
    =& \frac{\vert W_G\vert}{\vert W_K\vert} + \left(2\frac{n}{N}\right)^{2s}\sum\limits_{[w] \in W_G/W_K} \left( \zeta_L(2s,2\pi (w\cdot \lambda) (X)) + \zeta_L(2s,-2\pi (w\cdot \lambda) (X)) \right),
\end{align*}
where $\zeta_L(s,2\pi a)=\sum\limits_{j\in \N} \frac{e^{2\pi i\cdot j\cdot a}}{j^s}$ denotes the Lerch zeta function for $\operatorname{Re}(s)>2$ and $a\in \R$, see \cite[Equation (41)]{kkhermsymm}. We shall examine the zeta function $Z_t(s)$ below, so that we ultimately arrive at the following theorem.
\begin{theorem}
\label{contactlabel41}
    Let $G/H$ a compact CR symmetric contact space as in Section~\ref{contactsect8.1} and $t=\exp_G(X)\in T$ a generating element of the maximal torus, i.e.\ $\overline{\langle t\rangle_\Z}=T$.\\
    The equivariant contact torsion is given by
    \begin{align*}
        T_\Ecal (M,t) =& \frac{\vert W_G\vert}{\vert W_K\vert}\left(\ln \left(4\pi\frac{n}{N}\right)+\gamma\right) + \sum\limits_{[w]\in W_G/W_K} \frac{1}{2}(\psi([(w\cdot \lambda)(X)]) + \psi([-(w\cdot \lambda)(X)]))\\
        =& T_\Ecal (M) + \frac{\vert W_G\vert}{\vert W_K\vert}\gamma + \sum\limits_{[w]\in W_G/W_K} \frac{1}{2}(\psi([(w\cdot \lambda)(X)]) + \psi([-(w\cdot \lambda)(X)])),
    \end{align*}
    where $\psi(x)=\frac{\Gamma'(x)}{\Gamma(x)}$ is the Digamma function, $\gamma$ is Euler's constant and $[(w\cdot \lambda)(X)]$, $[-(w\cdot \lambda)(X)]$ are the projections to $]0,1[$ for $(w\cdot \lambda)(X)\in\R\setminus\Z$, i.e.\ the unique representatives in $]0,1[$ modulo $1$. We also obtain
    \begin{align*}
        Z_t(0)=0.
    \end{align*}
\end{theorem}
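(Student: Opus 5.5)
We start from the expression for $Z_t(s)$ derived just before the statement,
\begin{align*}
    Z_t(s) = \frac{\vert W_G\vert}{\vert W_K\vert} + \left(2\frac{n}{N}\right)^{2s}\sum\limits_{[w] \in W_G/W_K} \left( \zeta_L(2s,2\pi a_w) + \zeta_L(2s,-2\pi a_w) \right), \qquad a_w:=(w\cdot \lambda)(X)\in\R\setminus\Z.
\end{align*}
The whole task is to evaluate the values and derivatives at $s=0$ of the Lerch-type series $F(s,a):=\sum_{j\ge 1} e^{2\pi i j a} j^{-s}$ for $a\notin\Z$.

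First, $F(s,a)$ depends only on $a$ modulo $1$, so we may replace $a$ by its representative $[a]\in\,]0,1[$. Next, we use the standard identity expressing $F(s,a)+F(s,-a)$ through Hurwitz zeta functions. This is Hurwitz's formula, read in the inverse direction via the functional equation:
\begin{align*}
    F(s,a)+F(s,-a) = 2\sum_{j\ge1}\frac{\cos(2\pi j a)}{j^s} = \frac{2(2\pi)^s}{\Gamma(s)}\cos\!\left(\tfrac{\pi s}{2}\right)\cdot\frac{1}{2}\left(\zeta_H(1-s,[a])+\zeta_H(1-s,1-[a])\right).
\end{align*}
Since $1-[a]=[-a]$, this symmetric combination is exactly what appears in the statement. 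Setting $s\mapsto 2s$ gives a factor $1/\Gamma(2s)$, which vanishes to first order at $s=0$. The function $\zeta_H(1-2s,b)$ has a simple pole there with residue $-\tfrac{1}{2}$ in $s$, and its expansion is
\begin{align*}
    \zeta_H(1-2s,b) = -\frac{1}{2s} - \psi(b) + O(s).
\end{align*}

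The key computation is then a Laurent expansion. We combine
\begin{align*}
    \frac{1}{\Gamma(2s)} = 2s + 2\gamma(2s)^2/2+\dots = 2s+2\gamma s^2+O(s^3),
\end{align*}
together with $(2\pi)^{2s}=1+2s\ln(2\pi)+\dots$, $\cos(\pi s)=1+O(s^2)$, and the Hurwitz expansions. From this we read off the constant term and the linear term of
\begin{align*}
    g_a(s):=F(2s,a)+F(2s,-a).
\end{align*}
For the constant term we expect $g_a(0)=2\cdot\left(-\tfrac{1}{2}\right)=-1$, since $\sum_j \cos(2\pi j a)$ is Abel-summable to $-\tfrac{1}{2}$. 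The coefficient of $s$ should be
\begin{align*}
    -2\ln(2\pi) - 2\gamma - \psi([a]) - \psi([-a]).
\end{align*}
I expect this bookkeeping of the pole against the zero of $1/\Gamma$ to be the main obstacle. The signs and factors of $2$ must come out exactly so that $\gamma$ and $\ln(2\pi)$ appear with the right coefficients. As a check, I would test the limit $a\to 0$ against the non-equivariant value $2\zeta_\mathrm{R}'(0)\cdot 2=-2\ln(2\pi)$, after removing the divergent $\psi$ terms. Alternatively, one can compare with the known derivative $\partial_s\zeta_L(s,2\pi a)$ at $s=0$ via Lerch's formula $\partial_s F(0,a)=\ln\Gamma(a)-\dots$. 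That route gives the same structure and could be used if the Hurwitz route is messy.

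Finally, we assemble the pieces. Since $g_a(0)=-1$ and there are $\vert W_G/W_K\vert$ summands, we get
\begin{align*}
    Z_t(0)=\frac{\vert W_G\vert}{\vert W_K\vert}-\frac{\vert W_G\vert}{\vert W_K\vert}=0,
\end{align*}
which proves $Z_t(0)=0$. Differentiating the product $\left(2\tfrac{n}{N}\right)^{2s}g_{a_w}(s)$ at $s=0$ gives
\begin{align*}
    2\ln\!\left(2\tfrac{n}{N}\right)\cdot(-1) + g_{a_w}'(0).
\end{align*}
Summing over $[w]$ and multiplying by $-\tfrac{1}{2}$ then yields
\begin{align*}
    \frac{\vert W_G\vert}{\vert W_K\vert}\left(\ln\!\left(2\tfrac{n}{N}\right)+\ln(2\pi)+\gamma\right) + \sum_{[w]}\frac{1}{2}\left(\psi([a_w])+\psi([-a_w])\right).
\end{align*}
This is the first formula of the theorem. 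The second form follows from Theorem~\ref{contactlabel37}, which identifies $\frac{\vert W_G\vert}{\vert W_K\vert}\ln\left(4\pi\frac{n}{N}\right)$ as $T_\Ecal(M)$.
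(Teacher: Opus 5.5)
Your reduction to the two values $g_a(0)$ and $g_a'(0)$, and your final assembly, are exactly those of the paper. The paper obtains $g_a(0)=-1$ from the continuation $\zeta_L(0,2\pi a)=1/(e^{-2\pi i a}-1)$, see (\ref{contactlabel44}), and simply imports $g_a'(0)$ from \cite[Lemma 13]{kkReidemeisterTorsion}, see (\ref{contactlabel45}). Deriving both from Hurwitz's formula instead is a legitimate way to make the proof self-contained. However, in your write-up this step, which carries all the content, is only asserted (``we expect'', ``should be''). Moreover, the formulas you actually write down contradict the asserted values.

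For $0<b<1$, Hurwitz's formula reads $\zeta_H(1-s,b)=\frac{\Gamma(s)}{(2\pi)^s}\bigl(e^{-\pi i s/2}F(s,b)+e^{\pi i s/2}F(s,-b)\bigr)$. Adding it for $b=[a]$ and $b=[-a]=1-[a]$ gives
\begin{align*}
F(s,a)+F(s,-a)=\frac{(2\pi)^s}{2\Gamma(s)\cos(\pi s/2)}\bigl(\zeta_H(1-s,[a])+\zeta_H(1-s,[-a])\bigr).
\end{align*}
So your prefactor $(2\pi)^s\cos(\pi s/2)/\Gamma(s)$ is too large by $2\cos^2(\pi s/2)$, i.e.\ by a factor $2$ at $s=0$. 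Moreover, $1/\Gamma(2s)=2s+4\gamma s^2+O(s^3)$, not $2s+2\gamma s^2$. With your formulas the constant term is $2s\cdot(-\tfrac{1}{s})=-2$ rather than the Abel value $-1$. The coefficient of $s$ becomes $-4\ln(2\pi)-2\gamma-2\psi([a])-2\psi([-a])$; the $a\to0$ test you propose would indeed have exposed this. With the corrected identity and expansion one gets
\begin{align*}
g_a(s)&=\frac{(2\pi)^{2s}}{2\cos(\pi s)}\bigl(2s+4\gamma s^2+O(s^3)\bigr)\Bigl(-\frac{1}{s}-\psi([a])-\psi([-a])+O(s)\Bigr)\\
&=-1-\bigl(2\ln(2\pi)+2\gamma+\psi([a])+\psi([-a])\bigr)s+O(s^2),
\end{align*}
which is precisely (\ref{contactlabel44}) and (\ref{contactlabel45}). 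With this fix your assembly goes through verbatim and yields $Z_t(0)=0$ and both formulas of the theorem. Your version then proves the Lerch-zeta input instead of citing it.
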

\noindent The theorem above also extends to all $g\in G$ that are conjugate to such $t\in T$ with $\overline{\langle t\rangle_\Z}=T$.\\

According to \cite{kkhermsymm}, analytic continuation yields $\zeta_L(0,2\pi a)=\frac{1}{e^{-2\pi i\cdot a}-1}$ for $a\not\in \R\setminus\Z$. Thus,
\begin{align}
    &\zeta_L(0,2\pi a)+\zeta_L(0,-2\pi a)= \frac{1}{e^{-2\pi i\cdot a}-1} + \frac{1}{e^{2\pi i\cdot a}-1} \nonumber\\
    =& \frac{e^{2\pi i\cdot a}-1+e^{-2\pi i\cdot a}-1}{(e^{-2\pi i\cdot a}-1)(e^{2\pi i\cdot a}-1)}
    = \frac{2(\cos (2\pi a)-1)}{2(1-\cos (2\pi a))} = -1. \label{contactlabel44}
\end{align}
We have
\begin{align*}
    \frac{\partial}{\partial s} (\zeta_L(2s,2\pi a) + \zeta_L(2s,-2\pi a)) =& - \sum\limits_{j\in \Z\setminus \lbrace 0\rbrace} \frac{e^{2\pi i\cdot j\cdot a}\ln(j^2)}{j^{2s}}.
\end{align*}
This function is precisely the kind of function that was studied by K\"ohler in \cite[pp.~65--66]{kkReidemeisterTorsion}. According to \cite[Lemma 13]{kkReidemeisterTorsion}, its value at $s=0$ is given by
\begin{align}
    \frac{\partial}{\partial s}_{\vert s=0} (\zeta_L(2s,2\pi a) + \zeta_L(2s,-2\pi a)) = -2(\ln (2\pi) + \gamma) - (\psi([a])+\psi([-a])), \label{contactlabel45}
\end{align}
where $\psi(x)=\frac{\Gamma'(x)}{\Gamma(x)}$ is the Digamma function, $\gamma$ is Euler's constant and $[a],[-a]$ are the projections to $]0,1[$ for $a\in\R\setminus\Z$, i.e.\ the unique representatives in $]0,1[$ modulo $1$.

We can now proceed to prove Theorem~\ref{contactlabel41}.
\begin{proof}[{Proof of Theorem~\ref{contactlabel41}}]
    From (\ref{contactlabel44}), we obtain
    \begin{align*}
        \left(\sum\limits_{[w] \in W_G/W_K} \left( \zeta_L(2s,2\pi (w\cdot \lambda) (X)) + \zeta_L(2s,-2\pi (w\cdot \lambda) (X)) \right)\right)_{\vert s=0} = - \frac{\vert W_G\vert}{\vert W_K\vert}.
    \end{align*}
    Furthermore, (\ref{contactlabel45}) yields
    \begin{align*}
        &\frac{\partial}{\partial s}_{\vert s=0} \sum\limits_{[w] \in W_G/W_K} \left( \zeta_L(2s,2\pi (w\cdot \lambda) (X)) + \zeta_L(2s,-2\pi (w\cdot \lambda) (X)) \right) \\
        =& -2\cdot\frac{\vert W_G\vert}{\vert W_K\vert} (\ln(2\pi) + \gamma) - \sum\limits_{[w]\in W_G/W_K} (\psi ([(w\cdot \lambda) (X)]) + \psi ([-(w\cdot \lambda) (X)])).
    \end{align*}
    Thus,
    \begin{align*}
        Z_t(0) = \frac{\vert W_G\vert}{\vert W_K\vert} - \frac{\vert W_G\vert}{\vert W_K\vert} = 0.
    \end{align*}
    Since $\frac{\partial}{\partial s}_{\vert s=0} \left(2\frac{n}{N}\right)^{2s} = 2\ln \left(2\frac{n}{N}\right)$, we have
    \begin{align*}
        T_\Ecal(M,t) =&  \frac{\vert W_G\vert}{\vert W_K\vert}\cdot\ln\left(2\frac{n}{N}\right) + \frac{\vert W_G\vert}{\vert W_K\vert} \cdot(\gamma + \ln(2\pi)) \\
        &+ \sum\limits_{[w]\in W_G/W_K} \frac{1}{2}(\psi ([(w\cdot \lambda) (X)]) + \psi ([-(w\cdot \lambda) (X)])) \\
        =& \frac{\vert W_G\vert}{\vert W_K\vert}\cdot \left(\ln\left(4\pi\frac{n}{N}\right)+\gamma\right) \\
        &+  \sum\limits_{[w]\in W_G/W_K} \frac{1}{2}(\psi ([(w\cdot \lambda) (X)]) + \psi ([-(w\cdot \lambda) (X)])).
    \end{align*}
\end{proof}
\printbibliography[title={References}]
\end{document}